\pgfplotsset{compat=1.15}
\DeclareMathOperator{\Max}{Max}
\DeclareMathOperator{\Min}{Min}
\theoremstyle{plain}
\newtheorem{theorem}{Theorem}[section]
\newtheorem{corollary}[theorem]{Corollary}
\newtheorem{proposition}[theorem]{Proposition}
\newtheorem{lemma}[theorem]{Lemma}
\newtheorem*{conjecture}{Conjecture}
\theoremstyle{definition}
\newtheorem{definition}[theorem]{Definition}
\newtheorem{remark}[theorem]{Remark}
\newtheorem{example}[theorem]{Example}
\crefname{theorem}{Theorem}{Theorems}
\crefname{lemma}{Lemma}{Lemmas}
\crefname{corollary}{Corollary}{Corollaries}
\crefname{proposition}{Proposition}{Propositions}
\crefname{definition}{Definition}{Definitions}
\crefname{example}{Example}{Examples}
\crefname{remark}{Remark}{Remarks}
\crefname{conjecture}{Conjecture}{Conjectures}
\crefname{section}{Section}{Sections}
\crefname{equation}{\unskip}{\unskip}
\crefname{enumi}{\unskip}{\unskip}
\crefname{subsection}{Subsection}{Subsections}
\newcommand{\0}{\theta}
\newcommand{\G}{\Gamma}
\newcommand{\af}{\alpha}
\newcommand{\vf}{\varphi}
\newcommand{\dl}{\delta}
\newcommand{\sg}{\sigma}
\newcommand{\B}{\mathcal{B}}
\newcommand{\m}{{}^{-1}}
\newcommand{\sst}{\subseteq}
\newcommand{\impl}{\Rightarrow}
\newcommand{\gen}[1]{\langle #1\rangle}
\newcommand{\lf}{\lfloor}
\newcommand{\rf}{\rfloor}
\newcommand{\id}{\mathrm{id}}
\renewcommand{\iff}{\Leftrightarrow}
\begin{document}
	\title[Commutativity preservers of incidence algebras]{Commutativity preservers of incidence algebras}	
	\author{\'Erica Z. Fornaroli}
	\address{Departamento de Matem\'atica, Universidade Estadual de Maring\'a, Maring\'a, PR, CEP: 87020--900, Brazil}
	\email{ezancanella@uem.br}
	
	\author{Mykola Khrypchenko}
	\address{Departamento de Matem\'atica, Universidade Federal de Santa Catarina,  Campus Reitor Jo\~ao David Ferreira Lima, Florian\'opolis, SC, CEP: 88040--900, Brazil}
	\email{nskhripchenko@gmail.com}
	
	\author{Ednei A. Santulo Jr.}
	\address{Departamento de Matem\'atica, Universidade Estadual de Maring\'a, Maring\'a, PR, CEP: 87020--900, Brazil}
	\email{easjunior@uem.br}
	
	\subjclass[2020]{Primary: 16S50, 15A86, 17B60, 17B40; secondary: 06A07, 05C38}
	\keywords{Commutativity preserver, incidence algebra, maximal chain, diagonal element}
	
	\begin{abstract}
		Let $I(X,K)$ be the incidence algebra of a finite connected poset $X$ over a field $K$ and $D(X,K)$ its subalgebra consisting of diagonal elements. We describe the bijective linear maps $\vf:I(X,K)\to I(X,K)$ that strongly preserve the commutativity and satisfy $\vf(D(X,K))=D(X,K)$. We prove that such a map $\vf$ is a composition of a commutativity preserver of shift type and a commutativity preserver associated to a quadruple $(\0,\sg,c,\kappa)$ of simpler maps $\0$, $\sg$, $c$ and a sequence $\kappa$ of elements of $K$.
	\end{abstract}
	
	\maketitle
	
	\tableofcontents
	
	\section*{Introduction}
	
	Let $K$ be a field. A $K$-linear map $\vf:A\to B$ between two $K$-algebras {\it preserves the commutativity} (or is a {\it commutativity preserver}) if for any $a,b\in A$
	\begin{align}\label{commpreservdef}
		[a,b]=0 \impl [\vf(a),\vf(b)]=0.
	\end{align}
	Moreover, we say that a commutativity preserver $\vf$ is {\it strong}, whenever the converse implication of \cref{commpreservdef} also holds. In particular, every Lie homomorphism (isomorphism) is a (strong) commutativity preserver. We draw the reader's attention to the fact that the notion of a strong commutativity preserver used here is different from that considered in \cite{Chen_Liu, Chen_Zhao}. Observe that our notion of a strong commutativity preserver is the same as a {\it map preserving commutativity in both directions} in \cite{Bresar2007,Marcoux-Sourour99}. 
	
	Commutativity preservers, such as Lie homomorphisms and automorphisms, have been studied in the context of matrix algebras and their generalizations along the last decades (e.g.~\cite{Benkovic-Eremita04, Bresar-Semrl06, Bre93, Choi-Jafarian, Marcoux-Sourour99, Omladic-Radjavi-Semrl01, Semrl08, Slowik16, Watkins76}). More specifically, Watkins first obtained in \cite{Watkins76} a characterization of the injective maps which preserve commutativity for the full matrix algebras $M_n(K)$ over an algebraically closed field of characteristic zero. Marcoux and Sourour described in \cite{Marcoux-Sourour99} the strong commutativity preservers of the upper triangular matrix algebras $T_n(K)$ and, as aftermath, also obtained a description for the Lie automorphisms of such algebras. Bre\v{s}ar and \v{S}emrl gave in \cite{Bresar-Semrl06} a characterization of commutativity preservers on the full matrix algebras $M_n(K)$ over a field, for $n>2$. In most of the references above, strong commutativity preservers are sums of central-valued maps and scalar multiples of either automorphisms or anti-automorphisms. Such maps are particular cases of strong commutativity preservers of Lie type, as in \cref{Lie-type-def}. The other articles cited above deal with specific classes of commutativity preservers on matrix algebras and their generalizations. 
	
	In this paper we characterize a class of bijective strong commutativity preservers on an incidence algebra $I(X,K)$ of a finite partially ordered set $X$. The particular case of Lie automorphisms of $I(X,K)$ was studied in~\cite{FKS,FKS2}. Recall that $I(X,K)$ is isomorphic to a subalgebra of $T_n(K)$ formed by matrices having zeros in some fixed positions. In particular, it is isomorphic to $T_n(K)$ when $X$ is a chain of cardinality $n$. However, our approach is different from that commonly applied in the case of $T_n(K)$.
	
	The paper is organized as follows. In \cref{sec-prelim} more general definitions and results to be used in the next sections are presented. In \cref{sec-comm-preservers} we study the behavior of a commutativity preserver on the standard basis of an incidence algebra and introduce commutativity preservers {\it of shift type}, which will appear in our final description. In \cref{sec-diag-pres} we study bijective strong commutativity preservers that also preserve diagonality, i.e., the ones mapping diagonal elements to diagonal elements, and obtain our main results. Namely, we show in \cref{vf-decomp-S_af-tau} that every such map is the composition of a commutativity preserver of shift type and a commutativity preserver mapping the Jacobson radical of the incidence algebra to itself (called a {\it pure commutativity preserver}). The latter are characterized in terms of a triple $(\theta,\sigma,c)$ of simpler maps in \cref{vf-decomp-as-tau_0_sg_nu_s} resulting in a full characterization of all bijective strong commutativity preservers which preserve diagonality. We end the paper with some applications, examples and a conjecture.
	
	\section{Preliminaries}\label{sec-prelim}
	
	\subsection{Centralizers and commutativity preservers}
	
	Given an associative unital algebra $(A,\cdot)$, we use the standard notation $[a,b]$ for the Lie product $a\cdot b-b\cdot a$. Let $W\sst Y\sst A$. The {\it centralizer} of $W$ in $Y$ is $C_Y(W):=\{y\in Y: [w,y]=0,\forall w\in W\}$. If $Y$ is a subspace (resp.~subalgebra) of $A$, then $C_Y(W)$ is a subspace (resp.~subalgebra) of $A$. We write $Z(W)$ for $C_W(W)$ and call it the {\it center} of $W$.
	
	We naturally extended the notions of a commutativity preserver and a strong commutativity preserver to maps between subsets of some algebras. If $\vf:A\to B$ preserves the commutativity, then
	\begin{align}\label{vf(C_A(X))-sst-C_B(vf(X))}
		\vf(C_A(W))\sst C_B(\vf(W))    
	\end{align}
	for any $W\sst A$. When $\vf$ is surjective and strong, the inclusion \cref{vf(C_A(X))-sst-C_B(vf(X))} becomes equality. 
	
	\begin{remark}\label{ker-vf-strong}
		Let $\vf:A\to B$ be a strong commutativity preserver. Then $\ker\vf\sst Z(A)$. Moreover, if $A$ is central, then $\vf$ is injective $\iff\vf(1)\ne 0$. 
		
		Indeed, assume that $\vf(a)=0$. Then $[\vf(a),\vf(b)]=0$ for all $b\in A$, so $[a,b]=0$ for all $b\in A$, i.e. $a\in Z(A)$. If $A$ is central, then for any $0\ne a\in Z(A)$ we have $a=k\cdot 1$, where $k\in K^*$, so $\vf(1)\ne 0$ implies $\vf(a)=k\vf(1)\ne 0$.
	\end{remark}

	\subsection{Posets and incidence algebras}
	
	Let $(X,\le)$ be a partially ordered set (which we usually shorten to ``poset'') and $x,y\in X$. The \emph{interval} from $x$ to $y$ is the set $\lfloor x,y \rfloor=\{z\in X : x\leq z\leq y\}$. The poset $X$ is said to be \textit{locally finite} if all the intervals of $X$ are finite. A \textit{chain} in $X$ is a linearly ordered (under the induced order) subset of $X$. The \textit{length} of a finite chain $C\sst X$ is defined to be $|C|-1$. The \textit{length} of a finite poset $X$, denoted by $l(X)$, is the maximum length of all chains $C\sst X$. For simplicity, we write $l(x,y)$ for $l(\lf x,y\rf)$. A {\it walk} in $X$ is a sequence $x_0,x_1,\dots,x_m\in X$, such that $x_i$ and $x_{i+1}$ are comparable and $l( x_i,x_{i+1})=1$ (if $x_i\le x_{i+1}$) or $l(x_{i+1},x_i)=1$ (if $x_{i+1}\le x_i$) for all $i=0,\dots,m-1$. A walk $x_0,x_1,\dots,x_m$ is {\it closed} if $x_0=x_m$. We say that $X$ is {\it connected} if for any pair of $x,y\in X$ there is a walk $x=x_0,\dots,x_m=y$. We will denote by $\Min(X)$ (resp.~$\Max(X)$) the set of minimal (resp.~maximal) elements of $X$ and by $X^2_<$ the set of pairs $(x,y)\in X^2$ with $x<y$.
	
	Let $X$ be a locally finite poset and $K$ a field. The \emph{incidence algebra} $I(X,K)$ of $X$ over $K$ (see~\cite{Rota64}) is the $K$-space of functions $f:X\times X\to K$ such that $f(x,y)=0$ if $x\nleq y$. This is an associative $K$-algebra endowed with the multiplication given by the convolution
	$$
	(fg)(x,y)=\sum_{x\leq z\leq y}f(x,z)g(z,y),
	$$
	for any $f, g\in I(X,K)$. The algebra $I(X,K)$ is unital and its identity element $\delta$ is given by
	\begin{align*}
		\delta(x,y)=
		\begin{cases}
			1, & x=y,\\
			0, & x\ne y.
		\end{cases}
	\end{align*}
	
	Throughout the rest of the paper $X$ will stand for a connected finite poset with $|X|>1$. Then $I(X,K)$ admits the standard basis $\{e_{xy} : x\leq y\}$, where
	\begin{align*}	
		e_{xy}(u,v)=
		\begin{cases}
			1, & (u,v)=(x,y),\\
			0, & (u,v)\ne(x,y).
		\end{cases}
	\end{align*}
	We will write $e_{x}=e_{xx}$. More generally, given $Y\sst X$, we introduce $e_Y:=\sum_{y\in Y}e_y$. It is a well-known fact (see~\cite[Theorem~4.2.5]{SpDo}) that the Jacobson radical $J(I(X,K))$ of $I(X,K)$ is the subspace with basis $\B:=\{e_{xy} : x<y\}$. {\it Diagonal elements} of $I(X,K)$ are those $f\in I(X,K)$ satisfying  $f(x,y)=0$ for $x\neq y$. They form a commutative subalgebra $D(X,K)$ of $I(X,K)$ spanned by $\{e_{x} : x \in X\}$. Clearly, each $f\in I(X,K)$ can be uniquely written as $f=f_D+f_J$ with $f_D\in D(X,K)$ and $f_J\in J(I(X,K))$.
	
	We recall some definitions from \cite{FKS}. Let $C: u_1<u_2<\dots<u_m$ be a maximal chain in $X$. A bijection $\0:\B\to \B$ is \textit{increasing (resp.~decreasing) on $C$} if there exists a maximal chain $D: v_1<v_2<\dots<v_m$ in $X$ such that $\0(e_{u_iu_j})=e_{v_iv_j}$ for all $1\le i<j\le m$ (resp.~$\0(e_{u_iu_j})=e_{v_{m-j+1}v_{m-i+1}}$ for all $1\le i<j\le m$). Moreover, we say that $\0$ \textit{is monotone on maximal chains in $X$} if, for any maximal chain $C$ in $X$, $\0$ is increasing or decreasing on $C$. 
	
	\subsection{Centralizers of elements of $I(X,K)$}
	
	Given a subset $S\sst I(X,K)$, we denote by $\gen {S}$ the $K$-subspace of $I(X,K)$ generated by $S$.\footnote{Observe that in~\cite{FKS} the notation $\gen {S}$ was used for the \emph{ideal} generated by $S\sst I(X,K)$.} 
	
	\begin{lemma}\cite[Lemma 5.9]{GK}\label{inter-C_I(e_A)=D+<e_xy>}
		For all $x<y$ from $X$ 
		\begin{align*}
			\bigcap_{Y\supseteq\{x,y\}}C_{I(X,K)}(e_Y)=D(X,K)\oplus\gen{e_{xy}}.
		\end{align*}
	\end{lemma}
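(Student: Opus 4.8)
The plan is to reduce the intersection to a pointwise condition on the coefficients of $f$ by first describing the centralizer of a single idempotent $e_Y$. Writing an arbitrary $f\in I(X,K)$ in the standard basis as $f=\sum_{u\le v}f(u,v)e_{uv}$ and using that $e_Ye_{uv}=e_{uv}$ exactly when $u\in Y$ (and is $0$ otherwise), while $e_{uv}e_Y=e_{uv}$ exactly when $v\in Y$, I would compute
\begin{align*}
	[e_Y,f]=\sum_{u\in Y,\,v\notin Y}f(u,v)e_{uv}-\sum_{u\notin Y,\,v\in Y}f(u,v)e_{uv}.
\end{align*}
Since the $e_{uv}$ with $u\le v$ are linearly independent, this shows that $f\in C_{I(X,K)}(e_Y)$ if and only if $f(u,v)=0$ for every pair $u\le v$ such that exactly one of $u,v$ lies in $Y$.

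For the inclusion $\supseteq$, observe that the diagonal entries never contribute to $[e_Y,f]$, so $D(X,K)\sst C_{I(X,K)}(e_Y)$ for \emph{every} $Y$; and whenever $Y\supseteq\{x,y\}$ both endpoints of the pair $(x,y)$ lie in $Y$, so $e_{xy}\in C_{I(X,K)}(e_Y)$ as well. Hence $D(X,K)\oplus\gen{e_{xy}}$ is contained in the intersection over all such $Y$.

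The substance lies in the reverse inclusion $\sst$. Let $f$ belong to the intersection and fix a basis element $e_{uv}$ with $u<v$ and $(u,v)\ne(x,y)$; the goal is to produce a single admissible set $Y\supseteq\{x,y\}$ that separates $u$ and $v$, since by the criterion above this forces $f(u,v)=0$. If $v\notin\{x,y\}$, I would take $Y=\{x,y,u\}$, so that $u\in Y$ while $v\notin Y$. Otherwise $v\in\{x,y\}$, and since $u<v\le y$ together with $(u,v)\ne(x,y)$ rules out $u\in\{x,y\}$, the set $Y=\{x,y\}$ contains $v$ but not $u$. In either case exactly one of $u,v$ lies in $Y\supseteq\{x,y\}$, whence $f(u,v)=0$. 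Thus $f$ is supported on the diagonal together with the single off-diagonal entry $(x,y)$, i.e. $f\in D(X,K)\oplus\gen{e_{xy}}$.

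The computation of $[e_Y,f]$ and the forward inclusion are routine; the only point requiring care is the reverse inclusion, where one must exhibit, for \emph{every} off-diagonal pair other than $(x,y)$, an admissible separating set. I expect the mild case distinction according to whether $v\in\{x,y\}$ to be the main bookkeeping obstacle, but once it is handled the characterization of the intersection is immediate.
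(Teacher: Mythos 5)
Your proof is correct, but note that there is nothing in the paper to compare it against: the paper states this lemma with a citation to \cite[Lemma 5.9]{GK} and gives no internal proof, so your argument serves as a self-contained replacement. Two observations on its structure. First, your opening computation of $[e_Y,f]$ --- the criterion that $f$ centralizes $e_Y$ if and only if $f(u,v)=0$ whenever exactly one of $u,v$ lies in $Y$ --- is exactly the paper's \cref{basis-of-C_I(d)} specialized to the diagonal element $d=e_Y$, since $e_Y(u,u)=e_Y(v,v)$ precisely when $u,v$ are both in $Y$ or both outside $Y$; you could shorten the argument by invoking that lemma rather than recomputing the bracket. Second, the genuine content is your separating-set argument for the reverse inclusion, and it is complete: for an off-diagonal pair $(u,v)\ne(x,y)$, the set $Y=\{x,y,u\}$ separates $u$ from $v$ when $v\notin\{x,y\}$ (here $v\ne u$ because $u<v$), while if $v\in\{x,y\}$ your claim that $u\notin\{x,y\}$ does hold in both subcases ($v=x$ forces $u<x<y$, and $v=y$ with $u=x$ would give $(u,v)=(x,y)$), so $Y=\{x,y\}$ separates. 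Together with the trivial forward inclusion and the directness of the sum, this establishes the lemma.
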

	
	
	
	
	
	\begin{lemma}\label{basis-of-C_I(d)}
		For any $d\in D(X,K)$
		\begin{align*}
			C_{I(X,K)}(d)=D(X,K)\oplus\gen{e_{xy}\in\B:d(x,x)=d(y,y)}.
		\end{align*}
	\end{lemma}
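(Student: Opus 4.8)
The plan is to reduce the computation of the centralizer to the action of $d$ on the standard basis, exploiting the decomposition $f=f_D+f_J$ of an arbitrary element into its diagonal and radical parts. First I would observe that $D(X,K)$ is a commutative subalgebra containing $d$, so $[d,f_D]=0$ for every diagonal element; this already yields the inclusion $D(X,K)\sst C_{I(X,K)}(d)$ and shows that the diagonal summand contributes nothing to the bracket.

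The key step is to compute the bracket of $d$ with a single basis element $e_{xy}$, $x<y$. Writing $d=\sum_z d(z,z)e_z$ and using the convolution product (equivalently, thinking of $d$ as a diagonal matrix and $e_{xy}$ as a matrix unit), one finds $d\,e_{xy}=d(x,x)\,e_{xy}$ and $e_{xy}\,d=d(y,y)\,e_{xy}$, whence
\[
[d,e_{xy}]=\bigl(d(x,x)-d(y,y)\bigr)\,e_{xy}.
\]
In particular $e_{xy}$ centralizes $d$ precisely when $d(x,x)=d(y,y)$.

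By linearity, for $f=f_D+f_J$ with $f_J=\sum_{x<y}f(x,y)e_{xy}$ I would then compute
\[
[d,f]=\sum_{x<y}f(x,y)\bigl(d(x,x)-d(y,y)\bigr)e_{xy},
\]
and since the $e_{xy}$ are linearly independent this bracket vanishes if and only if $f(x,y)=0$ for every pair $x<y$ with $d(x,x)\ne d(y,y)$. This is exactly the condition $f_J\in\gen{e_{xy}\in\B:d(x,x)=d(y,y)}$, with $f_D\in D(X,K)$ unconstrained, which gives the claimed direct sum. There is no serious obstacle here: the computation is routine, and the only point requiring care is to keep the diagonal and radical parts separate, noting that $[d,f]$ always lands in $J(I(X,K))$, so the diagonal part of $f$ is entirely free while the radical part is governed solely by the equalities $d(x,x)=d(y,y)$.
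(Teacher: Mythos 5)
Your proposal is correct and follows essentially the same route as the paper's proof: both decompose $f=f_D+f_J$, use commutativity of $D(X,K)$ to discard the diagonal part, and reduce to the computation $[d,e_{xy}]=(d(x,x)-d(y,y))e_{xy}$ together with linear independence of the basis $\B$.
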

	\begin{proof}
		Let $d\in D(X,K)$ and $f\in I(X,K)$. Then
		\begin{align*}
			[f,d] & = [f_D,d]+[f_J,d]=[f_J,d] = \left[\sum_{x<y}f(x,y)e_{xy}, \sum_{u\in X}d(u,u)e_u\right]\\
			& = \sum_{x<y}f(x,y)\left[e_{xy}, \sum_{u\in X}d(u,u)e_u\right] = \sum_{x<y}f(x,y)(d(y,y)-d(x,x))e_{xy}.
		\end{align*}
		Thus, $f\in C_{I(X,K)}(d)$ if and only if $d(y,y)=d(x,x)$ for all $x<y$ such that $f(x,y)\neq 0$.
	\end{proof}
	
	
	\section{Commutativity preservers of $I(X,K)$}\label{sec-comm-preservers}
	
	\subsection{The values of a commutativity preserver on the standard basis}
	
	\begin{proposition}\label{vf-on-the-natural-basis}
		Let $\vf:I(X,K)\to I(X,K)$ be a linear map. Then $\vf$ is a commutativity preserver if and only if
		\begin{align}
			[\vf(e_{xy}),\vf(e_{uv})]&=0\text{ for all }x\le y\text{ and }u\le v\text{ with }[e_{xy},e_{uv}]=0,\label{[vf-comm-pres-on-the-basis]}\\
			[\vf(e_x),\vf(e_{xy})]&=[\vf(e_{xz}),\vf(e_{zy})]=[\vf(e_{xy}),\vf(e_y)]\text{ for all }x<z<y.\label{[vf(e_xz)_vf(e_zy)]=const}
		\end{align}
	\end{proposition}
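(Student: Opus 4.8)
The plan is to prove the two implications separately; the forward one is routine and the converse carries the real content. Throughout I would write $a=\sum_{x\le y}a(x,y)e_{xy}$, $b=\sum_{u\le v}b(u,v)e_{uv}$, and use the entrywise form of the commutator coming from the convolution product,
\[
[a,b](p,q)=\sum_{p\le z\le q}\bigl(a(p,z)b(z,q)-b(p,z)a(z,q)\bigr),\qquad p\le q.
\]

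For the forward implication, assume $\vf$ preserves commutativity. Then \eqref{[vf-comm-pres-on-the-basis]} is immediate: it is just \eqref{commpreservdef} applied to the commuting pairs of basis elements $a=e_{xy}$, $b=e_{uv}$. To obtain \eqref{[vf(e_xz)_vf(e_zy)]=const} I would fix $x<z<y$ and exhibit the two commuting elements
\[
[e_x+e_{xz},\,e_{xy}-e_{zy}]=0\qquad\text{and}\qquad[e_{xz}+e_{xy},\,e_{zy}-e_y]=0,
\]
which are verified directly from $e_{ij}e_{kl}=\dl_{jk}e_{il}$. Applying $\vf$, expanding by bilinearity, and discarding the cross terms $[\vf(e_x),\vf(e_{zy})]$, $[\vf(e_{xz}),\vf(e_{xy})]$, $[\vf(e_{xz}),\vf(e_y)]$, $[\vf(e_{xy}),\vf(e_{zy})]$ (which vanish, since the underlying basis elements commute and \eqref{[vf-comm-pres-on-the-basis]} has just been established) leaves precisely the chain of equalities \eqref{[vf(e_xz)_vf(e_zy)]=const}.

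For the converse, assume \eqref{[vf-comm-pres-on-the-basis]} and \eqref{[vf(e_xz)_vf(e_zy)]=const} and take $a,b$ with $[a,b]=0$. Expanding $[\vf(a),\vf(b)]$ bilinearly over the standard basis, \eqref{[vf-comm-pres-on-the-basis]} kills every term indexed by a commuting pair $(e_{xy},e_{uv})$, so that only the ``joined'' pairs survive, namely those with $y=u$ (and $x<v$) or with $x=v$ (and $u<y$). The key step is to re-index both families by a triple $p\le r\le q$ with $p<q$ (with the convention $e_{pp}=e_p$, $e_{qq}=e_q$) and, using $[e_{xy},e_{uv}]=-[e_{uv},e_{xy}]$ to merge the two opposite signs, to collapse the surviving sum into the layered form
\[
[\vf(a),\vf(b)]=\sum_{p<q}\ \sum_{p\le r\le q}\bigl(a(p,r)b(r,q)-a(r,q)b(p,r)\bigr)\,[\vf(e_{pr}),\vf(e_{rq})].
\]
Now \eqref{[vf(e_xz)_vf(e_zy)]=const} says the inner commutator $[\vf(e_{pr}),\vf(e_{rq})]$ is independent of $r$; denoting its common value by $\Phi_{pq}$ and pulling it out of the inner sum, the remaining factor is exactly $\sum_{p\le r\le q}(a(p,r)b(r,q)-b(p,r)a(r,q))=[a,b](p,q)=0$. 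Hence $[\vf(a),\vf(b)]=\sum_{p<q}\Phi_{pq}[a,b](p,q)=0$, as desired.

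I expect the main obstacle to be twofold. First, the purely combinatorial bookkeeping of the re-indexing and sign-tracking that produces the layered identity; getting the parametrization of the joined pairs right (and confirming that exactly the triples $p\le r\le q$, $p<q$, occur, with the stated coefficients) is where errors are easiest. Second, and more conceptually delicate, the factoring of $\Phi_{pq}$ requires the inner commutator to be constant in $r$ across the \emph{closed} range $\lf p,q\rf$, including the endpoints $r=p$ and $r=q$. When $\lf p,q\rf$ has an interior point this is \eqref{[vf(e_xz)_vf(e_zy)]=const} verbatim; in the covering case $p\lessdot q$ one still needs $[\vf(e_p),\vf(e_{pq})]=[\vf(e_{pq}),\vf(e_q)]$, which is the outer equality of \eqref{[vf(e_xz)_vf(e_zy)]=const} and which, in the forward direction, comes from the additional relation $[e_p+e_q,\,e_{pq}]=0$.
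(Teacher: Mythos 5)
Your proof is correct and follows essentially the same route as the paper's: the forward implication comes from applying $\vf$ to commuting combinations of basis elements (your identities $[e_x+e_{xz},e_{xy}-e_{zy}]=0$ and $[e_{xz}+e_{xy},e_{zy}-e_y]=0$ are a trivial variant of the paper's $[e_x+e_{xz},e_{zy}-e_{xy}]=0$ and $[e_x+e_y,e_{xy}]=0$), and the converse is the identical bilinear expansion collapsing $[\vf(f),\vf(g)]$ to $\sum_{x<y}[f,g](x,y)[\vf(e_x),\vf(e_{xy})]$. Your closing remark about the covering case $p\lessdot q$ is apt and matches how the paper implicitly reads \cref{[vf(e_xz)_vf(e_zy)]=const}: the outer equality $[\vf(e_x),\vf(e_{xy})]=[\vf(e_{xy}),\vf(e_y)]$ is proved there (from $[e_x+e_y,e_{xy}]=0$) and used in the converse for \emph{every} $x<y$, not only for pairs admitting an interior point.
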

	\begin{proof}
		Let $\vf$ be a commutativity preserver. Then \cref{[vf-comm-pres-on-the-basis]} is immediate. Since $[e_x+e_y,e_{xy}]=0$, then $[\vf(e_x),\vf(e_{xy})]=[\vf(e_{xy}),\vf(e_y)]$, so it only remains to prove the first equality of \cref{[vf(e_xz)_vf(e_zy)]=const}. The latter follows from $[e_x+e_{xz},e_{zy}-e_{xy}]=0$. Indeed, using \cref{[vf-comm-pres-on-the-basis]} we have
		\begin{align*}
			0&=[\vf(e_x),\vf(e_{zy})]-[\vf(e_x),\vf(e_{xy})]+[\vf(e_{xz}),\vf(e_{zy})]-[\vf(e_{xz}),\vf(e_{xy})]\\
			&=[\vf(e_{xz}),\vf(e_{zy})]-[\vf(e_x),\vf(e_{xy})].
		\end{align*}
		
		Conversely, assume \cref{[vf-comm-pres-on-the-basis],[vf(e_xz)_vf(e_zy)]=const}. Take arbitrary $f,g\in I(X,K)$ and calculate
		\begin{align*}
			[\vf(f),\vf(g)]&=\sum_{x\le y}\sum_{u\le v}f(x,y)g(u,v)[\vf(e_{xy}),\vf(e_{uv})]\\
			&=\sum_{x\le z\le y}(f(x,z)g(z,y)[\vf(e_{xz}),\vf(e_{zy})]+f(z,y)g(x,z)[\vf(e_{zy}),\vf(e_{xz})])\\
			&=\sum_{x\le z\le y}(f(x,z)g(z,y)-g(x,z)f(z,y))[\vf(e_{xz}),\vf(e_{zy})]\\
			&=\sum_{x<y}\left(\sum_{x\le z\le y}(f(x,z)g(z,y)-g(x,z)f(z,y)\right)[\vf(e_x),\vf(e_{xy})]\\
			&=\sum_{x<y}[f,g](x,y)[\vf(e_x),\vf(e_{xy})].
		\end{align*}
		So, if $[f,g]=0$, then $[\vf(f),\vf(g)]=0$.
	\end{proof}
	
	\begin{remark}\label{[vf(e_x)_vf(e_xy)]-LI}
		Let $\vf:I(X,K)\to I(X,K)$ be a commutativity preserver. If $\{[\vf(e_x),\vf(e_{xy})]: x<y\}$ is linearly independent, then $\vf$ is strong.
	\end{remark}
	
	\subsection{Commutativity preservers of shift type}
	
	\begin{definition}
		Given a linear map $\af:I(X,K)\to D(X,K)$, define $S_\af:I(X,K)\to I(X,K)$ by means of $S_\af(f)=f+\af(f)$ for all $f\in I(X,K)$.
	\end{definition}
	
	\begin{proposition}\label{S_af-strong-comm-pres}
		The map $S_\af$ is a commutativity preserver of $I(X,K)$ if and only if $\af:I(X,K)\to D(X,K)$ satisfies
		\begin{align}
			[\af(e_{xy}),e_{uv}]&=0\text{ for all }e_{xy}\ne e_{uv}\text{ in }\B,\label{[af(e_xy)_e_uv]=0}\\
			[\af(e_z),e_{xy}]&=0\text{ for all }e_{xy}\in\B\text{ such that }z\not\in\{x,y\}\text{ or }l(x,y)>1,\label{[af(e_xy)_e_z]=0}\\
			[\af(e_x),e_{xy}]&=[e_{xy},\af(e_y)]\text{ for all }e_{xy}\in\B\text{ such that }l(x,y)=1.\label{[af(e_xy)_e_y]=[e_x_af(e_y)]}
		\end{align}
		Moreover, $S_\af$ is strong if and only if 
		\begin{align}\label{[af(e_x)_e_xy]ne-e_xy}
			[\af(e_x),e_{xy}]\ne -e_{xy}\text{ for all }e_{xy}\in\B\text{ such that }l(x,y)=1,
		\end{align}
		in which case $S_\af$ is bijective if and only if 
		\begin{align}\label{af(dl)-ne-dl}
			\af(\dl)\ne-\dl.
		\end{align}
		
	\end{proposition}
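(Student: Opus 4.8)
The plan is to deduce all three equivalences from the basis-level criterion \cref{vf-on-the-natural-basis} applied to $\vf=S_\af$, together with \cref{[vf(e_x)_vf(e_xy)]-LI} and \cref{ker-vf-strong}, postponing any computation with general elements. The one computation I would record at the outset is that, since $\af$ is valued in the commutative algebra $D(X,K)$, for $d,d'\in D(X,K)$ and $e_{pq}\in\B$ we have $[d,d']=0$ and $[d,e_{pq}]=(d(p,p)-d(q,q))e_{pq}$. Hence for all $x\le y$ and $u\le v$, $[S_\af(e_{xy}),S_\af(e_{uv})]=[e_{xy},e_{uv}]+[e_{xy},\af(e_{uv})]+[\af(e_{xy}),e_{uv}]$, where the last two summands are scalar multiples of $e_{xy}$ and $e_{uv}$, respectively. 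This turns \cref{[vf-comm-pres-on-the-basis],[vf(e_xz)_vf(e_zy)]=const} into linear conditions on the diagonal entries of the $\af(e_{\cdot\cdot})$.

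I would then match \cref{vf-on-the-natural-basis} against \cref{[af(e_xy)_e_uv]=0,[af(e_xy)_e_z]=0,[af(e_xy)_e_y]=[e_x_af(e_y)]} by partitioning the pairs of standard basis elements. Condition \cref{[vf-comm-pres-on-the-basis]} for two distinct \emph{commuting} radical elements forces, by linear independence of $e_{xy}$ and $e_{uv}$, both $\af$-scalars to vanish, giving the commuting instances of \cref{[af(e_xy)_e_uv]=0}; the same condition for a diagonal $e_z$ and a radical $e_{xy}$ with $z\notin\{x,y\}$ gives the $z\notin\{x,y\}$ instances of \cref{[af(e_xy)_e_z]=0}. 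Every \emph{non-commuting} pair in $\B$ has the form $e_{xz},e_{zy}$ for a chain $x<z<y$, and here, computing $[S_\af(e_{xz}),S_\af(e_{zy})]=e_{xy}+(\ast)e_{xz}+(\ast)e_{zy}$ and $[S_\af(e_x),S_\af(e_{xy})]=(1+(\ast))e_{xy}$, condition \cref{[vf(e_xz)_vf(e_zy)]=const} forces the $e_{xz}$- and $e_{zy}$-coefficients to vanish (the non-commuting instances of \cref{[af(e_xy)_e_uv]=0}) and the $e_{xy}$-coefficient to equal $1$ (the $l(x,y)>1$ instances of \cref{[af(e_xy)_e_z]=0}). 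The remaining outer equality $[S_\af(e_x),S_\af(e_{xy})]=[S_\af(e_{xy}),S_\af(e_y)]$, valid for every $x<y$, is exactly \cref{[af(e_xy)_e_y]=[e_x_af(e_y)]} when $l(x,y)=1$ and is automatic when $l(x,y)>1$. Reading the same computations backwards shows the three conditions are also sufficient.

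For strongness I would write $[S_\af(e_x),S_\af(e_{xy})]=c_{xy}e_{xy}$ and observe that $c_{xy}=1$ when $l(x,y)>1$ (by \cref{[af(e_xy)_e_z]=0}), while $c_{xy}e_{xy}=e_{xy}+[\af(e_x),e_{xy}]$ when $l(x,y)=1$; thus \cref{[af(e_x)_e_xy]ne-e_xy} is equivalent to $c_{xy}\ne0$ for all $x<y$, hence to linear independence of $\{[S_\af(e_x),S_\af(e_{xy})]:x<y\}$, which yields strongness by \cref{[vf(e_x)_vf(e_xy)]-LI}. For the converse I would use the identity $[S_\af(f),S_\af(g)]=\sum_{x<y}[f,g](x,y)\,c_{xy}e_{xy}$ coming from the proof of \cref{vf-on-the-natural-basis}: if $c_{x_0y_0}=0$, then $f=e_{x_0}$, $g=e_{x_0y_0}$ satisfy $[f,g]=e_{x_0y_0}\ne0$ yet $[S_\af(f),S_\af(g)]=0$, so $S_\af$ is not strong. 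Finally, bijectivity follows from \cref{ker-vf-strong}: as $S_\af$ is a strong preserver and $I(X,K)$ is central (its center is $K\delta$ because $X$ is connected), $S_\af$ is injective, hence bijective by finite-dimensionality, precisely when $S_\af(\delta)=\delta+\af(\delta)\ne0$, i.e.\ when \cref{af(dl)-ne-dl} holds.

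The main obstacle is purely organizational rather than conceptual: the first part demands a careful, exhaustive split of the pairs of standard basis elements into commuting and non-commuting ones, with the correct sign in each diagonal commutator, so that \cref{[af(e_xy)_e_uv]=0,[af(e_xy)_e_z]=0,[af(e_xy)_e_y]=[e_x_af(e_y)]} are seen to capture exactly the content of \cref{vf-on-the-natural-basis} — neither more nor less.
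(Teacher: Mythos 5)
Your proposal is correct, and it takes a genuinely different (more modular) route than the paper. The paper proves the proposition self-containedly: it records the identity \cref{[S_af(f)_S_af(g)]=[f_g]+[af(f)_g]+[f_af(g)]}, proves sufficiency by computing $[\af(f),g]+[f,\af(g)]$ for \emph{arbitrary} $f,g$, arriving at the formula \cref{[S_af(f)_S_af(g)]=[f_g]+[f_g](x_y)[af(e_x)_e_xy]}, and proves necessity by feeding hand-picked commuting combinations ($[e_{xy}-e_{xv},e_{yv}+e_v]=0$, $[e_u+e_{ux},e_{xy}-e_{uy}]=0$, $[e_x+e_y,e_{xy}]=0$) into that identity; the formula \cref{[S_af(f)_S_af(g)]=[f_g]+[f_g](x_y)[af(e_x)_e_xy]} is then reused verbatim for the strongness equivalence. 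You instead treat \cref{vf-on-the-natural-basis,[vf(e_x)_vf(e_xy)]-LI} as black boxes: the preserver property becomes conditions \cref{[vf-comm-pres-on-the-basis],[vf(e_xz)_vf(e_zy)]=const} for $S_\af$, and linear-independence bookkeeping on $\{e_{xy},e_{xz},e_{zy}\}$ translates these exactly into \cref{[af(e_xy)_e_uv]=0,[af(e_xy)_e_z]=0,[af(e_xy)_e_y]=[e_x_af(e_y)]}, while strongness follows from \cref{[vf(e_x)_vf(e_xy)]-LI} together with the summation formula inside the proof of \cref{vf-on-the-natural-basis}, and the converse via the explicit pair $e_{x_0},e_{x_0y_0}$. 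Your route avoids redoing any general-element computation (the commuting-combination tricks of the paper's Cases 2--3 are, in effect, already buried in the proof of \cref{vf-on-the-natural-basis}); the paper's route buys the explicit identity \cref{[S_af(f)_S_af(g)]=[f_g]+[f_g](x_y)[af(e_x)_e_xy]}, which does double duty for strongness and keeps the proposition independent of how one parses the earlier criterion. The bijectivity step via \cref{ker-vf-strong} is identical in both proofs.

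One dependence you should state explicitly: your matching requires reading condition \cref{[vf(e_xz)_vf(e_zy)]=const} as asserting the outer equality $[\vf(e_x),\vf(e_{xy})]=[\vf(e_{xy}),\vf(e_y)]$ for \emph{every} $x<y$, in particular for covers $l(x,y)=1$, where no intermediate $z$ exists. Under a literal reading quantified only over triples $x<z<y$, that condition is vacuous on covers, and \cref{vf-on-the-natural-basis} would then hand you neither \cref{[af(e_xy)_e_y]=[e_x_af(e_y)]} in the necessity direction nor sufficiency on covers. The stronger reading is the one the paper's own proof of \cref{vf-on-the-natural-basis} establishes (the outer equality for all $x<y$ comes from $[e_x+e_y,e_{xy}]=0$) and in fact needs for its converse direction; since you verify the outer equality on covers in both directions, this is a matter of wording rather than a gap, but it deserves a sentence because your entire argument leans on that proposition.
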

	\begin{proof}
		We first observe that, for arbitrary $f,g\in I(X,K)$,
		\begin{align}\label{[S_af(f)_S_af(g)]=[f_g]+[af(f)_g]+[f_af(g)]}
			[S_\af(f),S_\af(g)]=[f,g]+[\af(f),g]+[f,\af(g)],
		\end{align}
		because $[\af(f),\af(g)]=0$. 
		
		Assume \cref{[af(e_xy)_e_uv]=0,[af(e_xy)_e_z]=0,[af(e_xy)_e_y]=[e_x_af(e_y)]}. Then
		\begin{align*}
			[\af(f),g]&=\sum_{x\le y}\sum_{u\le v}f(x,y)g(u,v)[\af(e_{xy}),e_{uv}]\\
			&=\sum_{x<y}\sum_{u<v}f(x,y)g(u,v)[\af(e_{xy}),e_{uv}]+\sum_{z\in X}\sum_{x<y}f(z,z)g(x,y)[\af(e_z),e_{xy}]\\
			&=\sum_{x<y}f(x,y)g(x,y)[\af(e_{xy}),e_{xy}]+\sum_{l(x,y)=1}g(x,y)(f(x,x)-f(y,y))[\af(e_x),e_{xy}].
		\end{align*}
		By symmetry
		\begin{align*}
			[f,\af(g)]&=-[\af(g),f]\\
			&=-\sum_{x\le y}g(x,y)f(x,y)[\af(e_{xy}),e_{xy}]-\sum_{l(x,y)=1}f(x,y)(g(x,x)-g(y,y))[\af(e_x),e_{xy}].
		\end{align*}
		Notice that for all $x<y$ with $l(x,y)=1$ one has
		\begin{align*}
			[f,g](x,y)&=f(x,x)g(x,y)+f(x,y)g(y,y)-g(x,x)f(x,y)-g(x,y)f(y,y)\\
			&=f(x,y)(g(y,y)-g(x,x))+g(x,y)(f(x,x)-f(y,y)).
		\end{align*}
		Thus, 
		\begin{align}\label{[S_af(f)_S_af(g)]=[f_g]+[f_g](x_y)[af(e_x)_e_xy]}
			[S_\af(f),S_\af(g)]=[f,g]+\sum_{l(x,y)=1}[f,g](x,y)[\af(e_x),e_{xy}],  
		\end{align}
		and obviously $S_\af$ is a commutativity preserver. 
		
		Conversely, assume that $S_\af$ is a commutativity preserver. Take arbitrary $e_{xy}\ne e_{uv}$ in $\B$.
		
		\textit{Case 1.} $[e_{xy},e_{uv}]=0$. Then $[S_\af(e_{xy}),S_\af(e_{uv})]=0$, so
		\begin{align*}
			[\af(e_{xy}),e_{uv}]+[e_{xy},\af(e_{uv})]=0
		\end{align*}
		by \cref{[S_af(f)_S_af(g)]=[f_g]+[af(f)_g]+[f_af(g)]}. However, $[\af(e_{xy}),e_{uv}]\in\gen{e_{uv}}$ and $[e_{xy},\af(e_{uv})]\in\gen{e_{xy}}$. Since $e_{xy}\ne e_{uv}$, we conclude that $[\af(e_{xy}),e_{uv}]=[e_{xy},\af(e_{uv})]=0$.
		
		\textit{Case 2.} $y=u$, so that $[e_{xy},e_{uv}]=e_{xv}$. Then $[e_{xy}-e_{xv},e_{yv}+e_v]=0$. Hence, by \cref{[S_af(f)_S_af(g)]=[f_g]+[af(f)_g]+[f_af(g)]} and the result of Case 1,
		\begin{align*}
			0&=[\af(e_{xy})-\af(e_{xv}),e_{yv}+e_v]+[e_{xy}-e_{xv},\af(e_{yv})+\af(e_v)]\\
			&=[\af(e_{xy}),e_{yv}]+[e_{xy},\af(e_{yv})]+[e_{xy},\af(e_v)]-[e_{xv},\af(e_v)].
		\end{align*}
		Since $[\af(e_{xy}),e_{yv}]\in\gen{e_{yv}}$, $[e_{xy},\af(e_{yv})]+[e_{xy},\af(e_v)]\in\gen{e_{xy}}$ and $[e_{xv},\af(e_v)]\in\gen{e_{xv}}$, we conclude that 
		\begin{align}\label{[e_xv_af(e_v)]=0}
			[\af(e_{xy}),e_{yv}]=[e_{xy},\af(e_{yv})]+[e_{xy},\af(e_v)]=[e_{xv},\af(e_v)]=0.
		\end{align}
		
		\textit{Case 3.} $x=v$, so that $[e_{xy},e_{uv}]=-e_{uy}$. Then $[e_u+e_{ux},e_{xy}-e_{uy}]=0$. Using the results of Cases 1 and 2 we have
		\begin{align*}
			0&=[\af(e_u)+\af(e_{ux}),e_{xy}-e_{uy}]+[e_u+e_{ux},\af(e_{xy})-\af(e_{uy})]\\
			&=[\af(e_u),e_{xy}]-[\af(e_u),e_{uy}]+[e_{ux},\af(e_{xy})].
		\end{align*}
		Therefore, 
		\begin{align}\label{[af(e_u)_e_uy]=0}
			[\af(e_u),e_{xy}]=[\af(e_u),e_{uy}]=[e_{ux},\af(e_{xy})]=0.
		\end{align}
		We have thus proved \cref{[af(e_xy)_e_uv]=0}. 
		
		We proceed with the proof of \cref{[af(e_xy)_e_z]=0}. Let $e_{xy}\in\B$ and $z\not\in\{x,y\}$. Then $[e_z,e_{xy}]=0$, so \cref{[af(e_xy)_e_z]=0} follows from \cref{[S_af(f)_S_af(g)]=[f_g]+[af(f)_g]+[f_af(g)]} and the commutativity preserving property of $S_\af$. Now let $e_{xy}\in\B$ with $l(x,y)>1$. We need to prove that $[\af(e_x),e_{xy}]=[\af(e_y),e_{xy}]=0$. But we have already seen this in \cref{[e_xv_af(e_v)]=0,[af(e_u)_e_uy]=0}. Finally, \cref{[af(e_xy)_e_y]=[e_x_af(e_y)]} is explained by $[e_x+e_y,e_{xy}]=0$.
		
		Let us pass to the proof of the second statement of the proposition. Assume that $S_\af$ is a commutativity preserver satisfying \cref{[af(e_x)_e_xy]ne-e_xy}. If $[S_\af(f),S_\af(g)]=0$, then $[f,g](x,y)=0$ for all $x<y$ with $l(x,y)>1$ by \cref{[S_af(f)_S_af(g)]=[f_g]+[f_g](x_y)[af(e_x)_e_xy]}. Therefore, \cref{[S_af(f)_S_af(g)]=[f_g]+[f_g](x_y)[af(e_x)_e_xy]} implies $[f,g](x,y)(e_{xy}+[\af(e_x),e_{xy}])=0$ for all $x<y$ with $l(x,y)=1$, whence $[f,g](x,y)=0$ for such $x<y$ thanks to \cref{[af(e_x)_e_xy]ne-e_xy}. Conversely, if $S_\af$ is strong, then $[S_\af(e_x),S_\af(e_{xy})]\ne 0$ because $[e_x,e_{xy}]=e_{xy}\ne 0$. Hence, 
		\begin{align*}
			[e_x,e_{xy}]+[\af(e_x),e_{xy}]\ne 0
		\end{align*}
		by \cref{[S_af(f)_S_af(g)]=[f_g]+[af(f)_g]+[f_af(g)]}. This implies \cref{[af(e_x)_e_xy]ne-e_xy}.
		
		Finally, assume that $S_\af$ is a strong commutativity preserver. Since $S_\af(\dl)=\dl+\af(\dl)$, then $\af(\dl)\ne-\dl\iff S_\af(\dl)\ne 0$. The latter is equivalent to injectivity of $S_\af$ (and hence, bijectivity of $S_\af$, because $|X|<\infty$) by \cref{ker-vf-strong}.
	\end{proof}
	
	\begin{definition}
		A strong commutativity preserver of the form $S_\af$, where $\af$ satisfies \cref{[af(e_xy)_e_uv]=0,[af(e_xy)_e_z]=0,[af(e_xy)_e_y]=[e_x_af(e_y)],[af(e_x)_e_xy]ne-e_xy}, is said to be \textit{of shift type}. It is a generalization of a shift map defined in \cite[Definition 5.24]{GK}.
	\end{definition}

	\section{Linear maps preserving the commutativity and $D(X,K)$}\label{sec-diag-pres}
	
	\begin{definition}
		Let $\vf:I(X,K)\to I(X,K)$ be a linear map. We say that $\vf$ {\it preserves the diagonality} (or is a {\it diagonality preserver}), if $\vf(D(X,K))\sst D(X,K)$. A diagonality preserver is said to be {\it strong}, if $\vf(D(X,K))=D(X,K)$.
	\end{definition}
	
	Since $|X|<\infty$, then any bijective diagonality preserver is clearly strong. Observe that $S_\af$ is a diagonality preserver for any $\af$. 
	
	\subsection{From $\vf$ to $(\0,\sg,\nu,c)$}\label{sec-from-(0_sg_nu_c)-to-vf}
	
	\begin{lemma}\label{vf(<e_xy>+D)=<0(e_xy)>+D}
		Let $\vf:I(X,K)\to I(X,K)$ be a bijective strong commutativity preserver. If $\vf$ additionally preserves the diagonality, then for any $x<y$ we have $\vf(\gen{e_{xy}}\oplus D(X,K))=\gen{\0(e_{xy})}\oplus D(X,K)$ for some bijection $\0=\0_\vf:\B\to\B$. 
	\end{lemma}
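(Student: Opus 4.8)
The plan is to realize $\gen{e_{xy}}\oplus D(X,K)$ as an intersection of centralizers of diagonal idempotents and then transport this description through $\vf$. By \cref{inter-C_I(e_A)=D+<e_xy>} we have $\gen{e_{xy}}\oplus D(X,K)=\bigcap_{Y\supseteq\{x,y\}}C_{I(X,K)}(e_Y)$. Since $\vf$ is bijective, it commutes with intersections of subspaces (for an injective map, $w\in\vf(A)\cap\vf(B)$ forces the two preimages to coincide), so applying $\vf$ yields $\vf(\gen{e_{xy}}\oplus D(X,K))=\bigcap_{Y\supseteq\{x,y\}}\vf(C_{I(X,K)}(e_Y))$.

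Next I would use the hypotheses on $\vf$ to simplify each factor. Because $\vf$ is a surjective strong commutativity preserver, the inclusion \cref{vf(C_A(X))-sst-C_B(vf(X))} is an equality, so $\vf(C_{I(X,K)}(e_Y))=C_{I(X,K)}(\vf(e_Y))$. Each $e_Y$ is diagonal and $\vf$ preserves diagonality, so $\vf(e_Y)\in D(X,K)$, and \cref{basis-of-C_I(d)} describes its centralizer as $D(X,K)\oplus\gen{S_Y}$ with $S_Y=\{e_{uv}\in\B:\vf(e_Y)(u,u)=\vf(e_Y)(v,v)\}$. Since every factor is a coordinate subspace (spanned by $D(X,K)$ together with a subset of the fixed basis $\B$) containing $D(X,K)$, their intersection has the same shape: $\vf(\gen{e_{xy}}\oplus D(X,K))=D(X,K)\oplus\gen{S}$, where $S=\bigcap_Y S_Y\sst\B$.

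It then remains to see that $S$ is a singleton. Here I would count dimensions: $\vf$ is a linear bijection, so $\dim\vf(\gen{e_{xy}}\oplus D(X,K))=\dim(\gen{e_{xy}}\oplus D(X,K))=|X|+1$, while $\dim(D(X,K)\oplus\gen{S})=|X|+|S|$ because the elements of $S\sst\B$ are linearly independent modulo $D(X,K)$. Hence $|S|=1$, and I define $\0(e_{xy})$ to be the unique element of $S$. Finally, $\0$ is injective: if $\0(e_{xy})=\0(e_{x'y'})$ then $\gen{e_{xy}}\oplus D(X,K)$ and $\gen{e_{x'y'}}\oplus D(X,K)$ share the same image under the bijection $\vf$, forcing $e_{xy}=e_{x'y'}$; as $\B$ is finite, $\0$ is a bijection of $\B$.

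The individual computations are routine; the weight of the argument is carried by the places where the hypotheses enter. The step I would watch most carefully is the transport of the intersection of centralizers through $\vf$, which relies simultaneously on bijectivity (to commute $\vf$ with the intersection and to run the dimension count) and on the strong, surjective, diagonality-preserving nature of $\vf$ (to turn $\vf(C_{I(X,K)}(e_Y))$ into the centralizer of a diagonal element). Once these are secured, the collapse of $\bigcap_Y\bigl(D(X,K)\oplus\gen{S_Y}\bigr)$ to $D(X,K)\oplus\gen{\bigcap_Y S_Y}$ is the only mildly non-formal manipulation, and it holds precisely because each factor is a coordinate subspace relative to the basis $\B$.
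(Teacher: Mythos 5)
Your proposal is correct and follows essentially the same route as the paper's proof: realize $\gen{e_{xy}}\oplus D(X,K)$ as $\bigcap_{Y\supseteq\{x,y\}}C_{I(X,K)}(e_Y)$ via \cref{inter-C_I(e_A)=D+<e_xy>}, transport the intersection through the bijection $\vf$ using equality in \cref{vf(C_A(X))-sst-C_B(vf(X))}, describe each factor with \cref{basis-of-C_I(d)}, and conclude by a dimension count. The only cosmetic difference is in the injectivity of $\0$: you pull back the equality of image subspaces through the bijection $\vf$, whereas the paper pulls back the diagonal element $k'd-kd'$ using $\vf(D(X,K))=D(X,K)$; both rest on the same facts.
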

	\begin{proof}
		By \cref{inter-C_I(e_A)=D+<e_xy>,basis-of-C_I(d)} and \cite[Lemmas 1.1--1.2]{GK}
		\begin{align*}
			\vf(\gen{e_{xy}}\oplus D(X,K))&=\bigcap_{Y\supseteq\{x,y\}}\vf(C_{I(X,K)}(e_Y))
			=\bigcap_{Y\supseteq\{x,y\}}C_{I(X,K)}(\vf(e_Y))\\
			&=\bigcap_{Y\supseteq\{x,y\}}\left(D(X,K)\oplus\gen{e_{uv}\in\B:\vf(e_Y)(u,u)=\vf(e_Y)(v,v)}\right)\\
			&=D(X,K)\oplus\bigcap_{Y\supseteq\{x,y\}}\gen{e_{uv}\in\B:\vf(e_Y)(u,u)=\vf(e_Y)(v,v)}\\
			&=D(X,K)\oplus\gen{e_{uv}\in\B:\ \vf(e_Y)(u,u)=\vf(e_Y)(v,v),\forall Y\supseteq\{x,y\}}.
		\end{align*}
		Since $\dim(\vf(\gen{e_{xy}}\oplus D(X,K)))=\dim(\gen{e_{xy}}\oplus D(X,K))$, there exists a unique pair $u<v$ such that $\vf(e_Y)(u,u)=\vf(e_Y)(v,v)$ for all $Y\supseteq\{x,y\}$. So, we define $\0(e_{xy})=e_{uv}$. Let us prove that $\0$ is injective. Assume that $\0(e_{xy})=\0(e_{x'y'})=e_{uv}$. Then $\vf(e_{xy})=ke_{uv}+d$ and $\vf(e_{x'y'})=k'e_{uv}+d'$, where $k,k'\in K$ and $d,d'\in D(X,K)$. Therefore, $\vf(k'e_{xy}-ke_{x'y'})=k'd-kd'\in D(X,K)$. But $\vf$ is strong, so $k'e_{xy}-ke_{x'y'}\in D(X,K)$, whence $e_{xy}=e_{x'y'}$ (and $k=k'$). Since $\B$ is finite, $\0$ is bijective.
	\end{proof}
	Throughout this subsection, $\vf$ always denotes a bijection which preserves diagonality and is a strong commutativity preserver.
	For any $x<y$ we write
	\begin{align}\label{vf(e_xy)=0(e_xy)+nu(e_xy)}
		\vf(e_{xy})=\sg(x,y)\0(e_{xy})+\nu(e_{xy}),
	\end{align}
	where $\sg:X^2_<\to K^*$ and $\nu:\B\to D(X,K)$.
	
	\begin{lemma}\label{0-strong-comm-pres}
		The bijection $\0:\B\to\B$ is a strong commutativity preserver.
	\end{lemma}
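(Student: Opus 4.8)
The plan is to read off the behaviour of $\0$ from that of $\vf$ via the decomposition \eqref{vf(e_xy)=0(e_xy)+nu(e_xy)}. I would fix distinct $e_{xy},e_{uv}\in\B$ (if $e_{xy}=e_{uv}$ there is nothing to check) and expand $[\vf(e_{xy}),\vf(e_{uv})]$ by bilinearity. The term $[\nu(e_{xy}),\nu(e_{uv})]$ vanishes because $\nu$ takes values in the commutative algebra $D(X,K)$, and the two mixed terms are commutators of a basis element with a diagonal element, which by the computation in the proof of \cref{basis-of-C_I(d)} are scalar multiples of that same basis element. Hence
\begin{equation*}
[\vf(e_{xy}),\vf(e_{uv})]=\sg(x,y)\sg(u,v)[\0(e_{xy}),\0(e_{uv})]+\lambda\,\0(e_{xy})+\mu\,\0(e_{uv})
\end{equation*}
for some $\lambda,\mu\in K$ depending on $\nu$. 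Note that $\0(e_{xy})\ne\0(e_{uv})$, since $\0$ is injective.

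Second, I would record the elementary fact that for distinct $e_{ab},e_{cd}\in\B$ the commutator $[e_{ab},e_{cd}]$ equals $e_{ad}$ when $b=c$, equals $-e_{cb}$ when $a=d$, and is $0$ otherwise; the constraints $a<b$ and $c<d$ make these two cases mutually exclusive, and in each non-zero case the result is a single element of $\B$ distinct from both $e_{ab}$ and $e_{cd}$. Applying this to $e_{ab}=\0(e_{xy})$ and $e_{cd}=\0(e_{uv})$, if $[\0(e_{xy}),\0(e_{uv})]\ne0$ then it equals $\pm e_w$ for some $e_w\in\B\setminus\{\0(e_{xy}),\0(e_{uv})\}$. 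Since $\0(e_{xy})$, $\0(e_{uv})$ and $e_w$ are then three distinct basis vectors, the coefficient of $e_w$ in the displayed identity is $\pm\sg(x,y)\sg(u,v)\ne0$ (as $\sg$ is $K^*$-valued), so $[\vf(e_{xy}),\vf(e_{uv})]\ne0$, and since $\vf$ is strong this forces $[e_{xy},e_{uv}]\ne0$. Contrapositively, $[e_{xy},e_{uv}]=0\impl[\0(e_{xy}),\0(e_{uv})]=0$, i.e.\ $\0$ preserves commutativity.

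The remaining, and delicate, point is the converse implication, which cannot be read off the displayed identity directly because the correction terms $\lambda\,\0(e_{xy})+\mu\,\0(e_{uv})$ might in principle produce a spurious cancellation. I would settle it by a counting argument exploiting that $\B$ is finite. Let $P$ be the finite set of two-element subsets of $\B$, let $E\sst P$ be the set of non-commuting pairs $\{b,b'\}$ (those with $[b,b']\ne0$), and let $\tau:P\to P$, $\tau(\{b,b'\})=\{\0(b),\0(b')\}$, which is a bijection because $\0$ is. The previous step says exactly that $\tau\m(E)\sst E$, and since $\tau\m$ is an injection of the finite set $P$ with $|\tau\m(E)|=|E|$, this forces $\tau\m(E)=E$, whence $\tau(E)=E$. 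That is, $[b,b']\ne0\impl[\0(b),\0(b')]\ne0$, the missing converse; combined with the previous paragraph it shows that $\0$ is a strong commutativity preserver. (Alternatively, one could apply the preservation direction to $\vf\m$, whose associated bijection is $\0\m$ by \cref{vf(<e_xy>+D)=<0(e_xy)>+D}.) The main obstacle is precisely this converse direction, and it is the finiteness of $\B$ that makes it go through despite the uncontrolled scalars $\lambda,\mu$.
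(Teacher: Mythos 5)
Your proof is correct and takes essentially the same route as the paper: both expand $[\vf(e_{xy}),\vf(e_{uv})]$ via \cref{vf(e_xy)=0(e_xy)+nu(e_xy)}, note that the mixed terms lie in $\gen{\0(e_{xy})}$ and $\gen{\0(e_{uv})}$ while $[\0(e_{xy}),\0(e_{uv})]$ is $0$ or $\pm$ a third, distinct basis element, and conclude one direction by linear independence of $\B$. Your finiteness counting argument for strongness (a bijection on the finite set of pairs whose inverse maps non-commuting pairs into non-commuting pairs must do so onto) is the same as the paper's, which phrases it with the map $\0^2$ on ordered pairs and the finite set $\mathcal{S}$ of commuting pairs.
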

	\begin{proof}
		Let $e_{xy},e_{uv}\in\B$ such that $[e_{xy},e_{uv}]=0$ and $e_{xy}\ne e_{uv}$. Then by \cref{vf(e_xy)=0(e_xy)+nu(e_xy)}
		\begin{align*}
			0=[\vf(e_{xy}),\vf(e_{uv})]&=[\sg(x,y)\0(e_{xy})+\nu(e_{xy}),\sg(u,v)\0(e_{uv})+\nu(e_{uv})]\\
			&=\sg(x,y)\sg(u,v)[\0(e_{xy}),\0(e_{uv})]+\sg(x,y)[\0(e_{xy}),\nu(e_{uv})]\\
			&\quad+\sg(u,v)[\nu(e_{xy}),\0(e_{uv})].
		\end{align*}
		Clearly, $[\0(e_{xy}),\nu(e_{uv})]\in\gen{\0(e_{xy})}$, $[\nu(e_{xy}),\0(e_{uv})]\in\gen{\0(e_{uv})}$, while $[\0(e_{xy}),\0(e_{uv})]$ is either $0$ or $\pm e_{ab}$ for some $e_{ab}\not\in\{\0(e_{xy}),\0(e_{uv})\}$. By the linear independence of $\B$, we conclude that $[\0(e_{xy}),\0(e_{uv})]=0$ (and $[\0(e_{xy}),\nu(e_{uv})]=[\nu(e_{xy}),\0(e_{uv})]=0$).
		
		Denote by $\0^2$ the map $\B^2\to\B^2$ sending $(e_{xy},e_{uv})$ to $(\0(e_{xy}),\0(e_{uv}))$ and by  $\mathcal{S}$ the set of pairs $(e_{xy},e_{uv})\in\B^2$ such that $[e_{xy},e_{uv}]=0$. Then $\0^2$ is a bijection and $\0^2(\mathcal{S})\sst\mathcal{S}$. It follows that $\0^2(\mathcal{S})=\mathcal{S}$, because $\mathcal{S}$ is finite. Thus, $\0$ is strong.
	\end{proof}
	
	\begin{corollary}\label{[e_xy_e_uv]=0=>[0(e_xy)_nu(e_uv)]=0}
		If $[e_{xy},e_{uv}]=0$ and $e_{xy}\ne e_{uv}$, then $[\0(e_{xy}),\nu(e_{uv})]=0$.
	\end{corollary}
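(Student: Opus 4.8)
The plan is to observe that the desired conclusion has essentially already been produced, in passing, during the proof of \cref{0-strong-comm-pres}, so the real content is to isolate and record that intermediate step. Fix $e_{xy}\ne e_{uv}$ in $\B$ with $[e_{xy},e_{uv}]=0$. Using the decomposition \cref{vf(e_xy)=0(e_xy)+nu(e_xy)} together with the fact that $\nu$ takes values in the commutative algebra $D(X,K)$ (so that the bracket $[\nu(e_{xy}),\nu(e_{uv})]$ vanishes), I would expand
\begin{align*}
	0=[\vf(e_{xy}),\vf(e_{uv})]&=\sg(x,y)\sg(u,v)[\0(e_{xy}),\0(e_{uv})]\\
	&\quad+\sg(x,y)[\0(e_{xy}),\nu(e_{uv})]+\sg(u,v)[\nu(e_{xy}),\0(e_{uv})].
\end{align*}

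The heart of the matter is to show that these three summands lie in three pairwise distinct one-dimensional subspaces of $\gen{\B}$, so that linear independence of $\B$ forces each of them to vanish separately. The second summand lies in $\gen{\0(e_{xy})}$ and the third in $\gen{\0(e_{uv})}$: indeed, by the computation underlying \cref{basis-of-C_I(d)}, bracketing a standard basis element $e_{ab}$ with a diagonal element $d$ returns $(d(b,b)-d(a,a))e_{ab}$, a scalar multiple of $e_{ab}$ itself. For the first summand I would invoke the elementary description of brackets of matrix units: writing $\0(e_{xy})=e_{ab}$ and $\0(e_{uv})=e_{cd}$ with $e_{ab}\ne e_{cd}$, the bracket $[e_{ab},e_{cd}]$ equals $e_{ad}$ when $b=c$, equals $-e_{cb}$ when $d=a$, and vanishes otherwise. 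The two defining conditions $b=c$ and $d=a$ cannot hold simultaneously, as together they would force $a<a$; hence $[\0(e_{xy}),\0(e_{uv})]$ is either $0$ or $\pm e_{ef}$ for a single basis element $e_{ef}$, and a one-line check confirms $e_{ef}\notin\{\0(e_{xy}),\0(e_{uv})\}$.

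With the three terms thus separated into distinct coordinate directions, linear independence of $\B$ forces every coefficient to be zero; in particular $\sg(x,y)[\0(e_{xy}),\nu(e_{uv})]=0$, and since $\sg(x,y)\in K^*$ we conclude $[\0(e_{xy}),\nu(e_{uv})]=0$, as required. I do not expect a genuine obstacle here: the only mildly delicate point is the bookkeeping verifying that the surviving matrix-unit bracket $e_{ef}$ is distinct from both $\0(e_{xy})$ and $\0(e_{uv})$, which is precisely the claim asserted (without elaboration) inside \cref{0-strong-comm-pres}. Accordingly, the cleanest write-up merely notes that this identity was already obtained as a by-product of the argument establishing \cref{0-strong-comm-pres}.
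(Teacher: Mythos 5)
Your proposal is correct and takes essentially the same route as the paper: the paper's proof of this corollary is literally the remark that the identity $[\0(e_{xy}),\nu(e_{uv})]=0$ was already extracted, via the same expansion of $0=[\vf(e_{xy}),\vf(e_{uv})]$ and the linear-independence argument separating the three summands, inside the proof of \cref{0-strong-comm-pres}. Your additional bookkeeping (that the surviving matrix-unit bracket is distinct from $\0(e_{xy})$ and $\0(e_{uv})$, and that $[\nu(e_{xy}),\nu(e_{uv})]=0$ by commutativity of $D(X,K)$) just makes explicit what the paper leaves implicit.
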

	\begin{proof}
		This is a consequence of the proof of \cref{0-strong-comm-pres}.
	\end{proof}
	
	\begin{corollary}\label{theta_e_xy_theta_e_yz}
		Let $x<y<z$ in $X$. Then there are $u<v<w$ in $X$ such that $\{\0(e_{xy}),\0(e_{yz})\}=\{e_{uv},e_{vw}\}$.
	\end{corollary}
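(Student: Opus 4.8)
The plan is to exploit the fact that $e_{xy}$ and $e_{yz}$ fail to commute, together with \cref{0-strong-comm-pres}, which tells us that $\0$ preserves the commutativity in both directions. First I would record the elementary computation $e_{xy}e_{yz}=e_{xz}$ and $e_{yz}e_{xy}=0$, so that $[e_{xy},e_{yz}]=e_{xz}\ne 0$; in particular $e_{xy}\ne e_{yz}$. Since $\0$ is a strong commutativity preserver, the non-vanishing of $[e_{xy},e_{yz}]$ forces $[\0(e_{xy}),\0(e_{yz})]\ne 0$.

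The next step is to translate this non-vanishing into a combinatorial condition on the indices. Writing $\0(e_{xy})=e_{ab}$ and $\0(e_{yz})=e_{cd}$ with $a<b$ and $c<d$, I would recall that $e_{ab}e_{cd}$ equals $e_{ad}$ when $b=c$ and is $0$ otherwise, while $e_{cd}e_{ab}$ equals $e_{cb}$ when $d=a$ and is $0$ otherwise. Hence $[e_{ab},e_{cd}]\ne 0$ forces $b=c$ or $a=d$. These two conditions are mutually exclusive: if both held, we would get $a<b=c<d=a$, a contradiction, so exactly one of them occurs.

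Finally I would relabel in each case to extract the required chain $u<v<w$. If $b=c$, then $\0(e_{xy})=e_{ab}$ and $\0(e_{yz})=e_{bd}$, and the inequalities $a<b$ and $b=c<d$ yield $a<b<d$; taking $(u,v,w)=(a,b,d)$ gives $\{\0(e_{xy}),\0(e_{yz})\}=\{e_{uv},e_{vw}\}$. If instead $a=d$, then $\0(e_{yz})=e_{ca}$ and $\0(e_{xy})=e_{ab}$ with $c<a<b$, and taking $(u,v,w)=(c,a,b)$ gives the same conclusion with the two images swapped. In both cases $u<v<w$, as desired. I expect that the only point requiring genuine care is the structural observation in the second step, namely that two standard matrix units of an incidence algebra fail to commute precisely when one concatenates with the other, together with the verification that the two concatenation patterns cannot occur simultaneously for admissible index pairs; once this is in place, the statement follows from \cref{0-strong-comm-pres} with essentially no further computation.
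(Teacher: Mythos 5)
Your proposal is correct and follows exactly the paper's argument: apply \cref{0-strong-comm-pres} to the nonzero commutator $[e_{xy},e_{yz}]$ and then read off from the multiplication rule for basis elements $e_{ab}$ that the two images must concatenate, giving the chain $u<v<w$. The paper states this last combinatorial step without elaboration, whereas you spell out the dichotomy $b=c$ or $a=d$ and its mutual exclusivity, but this is the same proof.
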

	\begin{proof}
		Indeed, since $[e_{xy},e_{yz}]\ne 0$, then $[\0(e_{xy}),\0(e_{yz})]\ne 0$ by \cref{0-strong-comm-pres}, whence there are $u<v<w$ such that either $\0(e_{xy})=e_{uv}$ and $\0(e_{yz})=e_{vw}$, or $\0(e_{xy})=e_{vw}$ and $\0(e_{yz})=e_{uv}$.
	\end{proof}
	
	\begin{remark}\label{center-of-B}
		Observe that $Z(\B)=Z(J(I(X,K)))\cap\B$, so $Z(\B)=\{e_{xy}: x\in\Min(X)\text{ and }y\in\Max(X)\}$ by~\cite[Proposition 2.5]{FKS}.
	\end{remark}
	
	\begin{corollary}\label{theta_fix_center}
		We have $\0(Z(\B))=Z(\B)$.
	\end{corollary}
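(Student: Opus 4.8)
The plan is to exploit the fact that $Z(\B)$ is defined purely in terms of commutativity relations inside $\B$, together with the strong commutativity preserving property of $\0$ established in \cref{0-strong-comm-pres}. Recall that $Z(\B)=C_\B(\B)$ consists of those $e\in\B$ commuting with every element of $\B$, and that the proof of \cref{0-strong-comm-pres} shows precisely that for distinct $e,f\in\B$ one has $[e,f]=0\iff[\0(e),\0(f)]=0$. The whole statement should then fall out formally, without any computation specific to incidence algebras.

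First I would prove the inclusion $\0(Z(\B))\sst Z(\B)$. Fix $e\in Z(\B)$ and an arbitrary $g\in\B$; since $\0$ is a bijection of $\B$, write $g=\0(f)$ for a (unique) $f\in\B$. If $f=e$, then $[\0(e),g]=[\0(e),\0(e)]=0$ trivially. If $f\ne e$, then $[e,f]=0$ because $e\in Z(\B)$, so $[\0(e),\0(f)]=[\0(e),g]=0$ by the equivalence above. As $g\in\B$ was arbitrary, this shows $\0(e)\in Z(\B)$.

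Finally I would upgrade this inclusion to an equality. Since $\0$ is a bijection of the finite set $\B$ and $Z(\B)$ is finite, the inclusion $\0(Z(\B))\sst Z(\B)$ forces $|\0(Z(\B))|=|Z(\B)|$ and hence $\0(Z(\B))=Z(\B)$; alternatively, the same argument applied to $\0^{-1}$, which is again a bijective strong commutativity preserver of $\B$, yields the reverse inclusion directly. I do not expect a genuine obstacle here: the result is a formal consequence of the intrinsic, relation-theoretic description of the center, and the only points requiring care are the treatment of the diagonal case $f=e$ and the use of bijectivity of $\0$ to realise every $g\in\B$ as some $\0(f)$.
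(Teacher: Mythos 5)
Your proof is correct and follows exactly the route the paper intends: the corollary is stated without proof as an immediate consequence of \cref{0-strong-comm-pres}, since $Z(\B)$ is defined purely by commutation relations and $\0$ is a bijection of the finite set $\B$ preserving commutativity in both directions. Your handling of the diagonal case $f=e$ and the upgrade from inclusion to equality via finiteness (or via $\0^{-1}$) are precisely the details the paper leaves implicit.
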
 
	
	\begin{lemma}\label{theta_e_xz}
		Let $x<y<z$ in $X$. If $\0(e_{xy})=e_{uv}$ (resp.~$e_{vw}$) and $\0(e_{yz})=e_{vw}$ (resp.~$e_{uv}$), then $\0(e_{xz})=e_{uw}$.
	\end{lemma}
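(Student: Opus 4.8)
The plan is to exploit the ``constant commutator'' identity \cref{[vf(e_xz)_vf(e_zy)]=const} of \cref{vf-on-the-natural-basis}, applied to the chain $x<y<z$ with $y$ as the middle element. This gives
\begin{align*}
	[\vf(e_{xy}),\vf(e_{yz})]=[\vf(e_x),\vf(e_{xz})].
\end{align*}
The point is that the two sides behave very differently: the right-hand side is forced to be a scalar multiple of the single basis vector $\0(e_{xz})$, whereas the left-hand side can be expanded explicitly and turns out to have a nonzero component along $e_{uw}$. Matching the two will pin down $\0(e_{xz})=e_{uw}$.

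First I would treat the right-hand side. As $\vf$ preserves diagonality, $\vf(e_x)\in D(X,K)$, and by \cref{vf(e_xy)=0(e_xy)+nu(e_xy)} we have $\vf(e_{xz})=\sg(x,z)\0(e_{xz})+\nu(e_{xz})$ with $\nu(e_{xz})\in D(X,K)$. Since diagonal elements commute, $[\vf(e_x),\nu(e_{xz})]=0$, so $[\vf(e_x),\vf(e_{xz})]=\sg(x,z)[\vf(e_x),\0(e_{xz})]$; and the commutator of a diagonal element with a single matrix unit is a scalar multiple of that unit (cf.\ the computation in the proof of \cref{basis-of-C_I(d)}). Hence the right-hand side is a scalar multiple of $\0(e_{xz})$.

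Next I would expand the left-hand side, say in the case $\0(e_{xy})=e_{uv}$, $\0(e_{yz})=e_{vw}$. Writing $\vf(e_{xy})=\sg(x,y)e_{uv}+\nu(e_{xy})$ and $\vf(e_{yz})=\sg(y,z)e_{vw}+\nu(e_{yz})$ and expanding the bracket bilinearly, the diagonal--diagonal term vanishes, the two mixed terms $[e_{uv},\nu(e_{yz})]$ and $[\nu(e_{xy}),e_{vw}]$ lie in $\gen{e_{uv}}$ and $\gen{e_{vw}}$ respectively (again by the commutator computation above), and the surviving term is $\sg(x,y)\sg(y,z)[e_{uv},e_{vw}]=\sg(x,y)\sg(y,z)e_{uw}$. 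Thus
\begin{align*}
	[\vf(e_{xy}),\vf(e_{yz})]=\sg(x,y)\sg(y,z)e_{uw}+c_1e_{uv}+c_2e_{vw}
\end{align*}
for some $c_1,c_2\in K$, with the $e_{uw}$-coefficient nonzero because $\sg$ takes values in $K^*$.

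Finally I would compare. The right-hand side is a scalar multiple of the single basis vector $\0(e_{xz})$, and equals the left-hand side, whose expansion has nonzero coefficient along $e_{uw}$. As $u<v<w$, the vectors $e_{uw},e_{uv},e_{vw}$ are pairwise distinct, so a scalar multiple of one basis vector can have a nonzero $e_{uw}$-component only if that vector is $e_{uw}$ itself; therefore $\0(e_{xz})=e_{uw}$. The symmetric case $\0(e_{xy})=e_{vw}$, $\0(e_{yz})=e_{uv}$ is identical except that $[e_{vw},e_{uv}]=-e_{uw}$, giving $e_{uw}$-coefficient $-\sg(x,y)\sg(y,z)\ne0$ and the same conclusion. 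I expect the only delicate point to be the bookkeeping in the expansion of the left-hand side: one must check that the mixed commutators feed only into $e_{uv}$ and $e_{vw}$ and never into $e_{uw}$, so that the $e_{uw}$-coefficient cannot cancel.
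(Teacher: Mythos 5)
Your proof is correct, and it takes a genuinely different route from the paper's. The paper argues backwards, by contradiction, on the preimage of $e_{uw}$: using that $\vf$ is a \emph{strong} preserver it first derives the diagonal-entry inequalities $\vf(e_x)(u,u)\ne\vf(e_x)(w,w)$ and $\vf(e_z)(u,u)\ne\vf(e_z)(w,w)$, then takes $a<b$ with $\0(e_{ab})=e_{uw}$ (using bijectivity of $\0$) and eliminates every possibility except $(a,b)=(x,z)$, invoking \cref{0-strong-comm-pres} along the way. You instead work forwards: in the identity $[\vf(e_{xy}),\vf(e_{yz})]=[\vf(e_x),\vf(e_{xz})]$ supplied by \cref{vf-on-the-natural-basis}, the right-hand side is a scalar multiple of the single basis vector $\0(e_{xz})$ (by diagonality preservation and \cref{vf(e_xy)=0(e_xy)+nu(e_xy)}), while the left-hand side has $e_{uw}$-coefficient $\pm\sg(x,y)\sg(y,z)\ne 0$; your bookkeeping that the mixed commutators land in $\gen{e_{uv}}$ and $\gen{e_{vw}}$ and so cannot cancel the $e_{uw}$-component is exactly the needed check, and the conclusion follows from linear independence of $\B$. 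As for what each approach buys: granting the decomposition \cref{vf(e_xy)=0(e_xy)+nu(e_xy)}, your argument consumes only the one-directional preserving property (packaged in \cref{vf-on-the-natural-basis}) and never uses the strongness of $\vf$ nor \cref{0-strong-comm-pres}, so it is leaner in hypotheses; moreover, equating the remaining coefficients in your identity yields $[\0(e_{xy}),\nu(e_{yz})]=[\nu(e_{xy}),\0(e_{yz})]=0$ and the scalar relation between $\sg(x,z)$ and $\sg(x,y)\sg(y,z)$ essentially for free --- facts the paper proves separately and later, in \cref{[0(e_xy)_nu(e_yz)]=[nu(e_xy)_0(e_yz)]=0,defn-of-c(x_z)}, by expanding $\vf$ applied to $[e_x+e_{xy},e_{yz}-e_{xz}]=0$, which is the same identity underlying the equation of \cref{vf-on-the-natural-basis} that you use. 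What the paper's route produces in exchange are the intermediate inequalities \cref{vf(e_x)(uu)-ne-vf(e_x)(ww),vf(e_z)(uu)-ne-vf(e_z)(ww)}, differences of diagonal entries of $\vf(e_x)$ and $\vf(e_z)$ that prefigure the map $c$ of \cref{cxy} and the style of argument reused in the rest of the section.
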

	\begin{proof}
		Suppose first that $\0(e_{xy})=e_{uv}$ and $\0(e_{yz})=e_{vw}$. Since $[e_x,e_{xy}]\ne 0$, then in view of \cref{vf(e_xy)=0(e_xy)+nu(e_xy)} and $\vf(e_x)\in D(X,K)$ we have $[\vf(e_x),\0(e_{xy})]\ne 0$, i.e. $\vf(e_x)(u,u)\ne \vf(e_x)(v,v)$. Similarly, from $[e_x,e_{yz}]=0$ we deduce $\vf(e_x)(v,v)=\vf(e_x)(w,w)$. It follows that 
		\begin{align}\label{vf(e_x)(uu)-ne-vf(e_x)(ww)}
			\vf(e_x)(u,u)\ne \vf(e_x)(w,w).
		\end{align}
		Now, considering $[e_z,e_{xy}]=0$ and $[e_z,e_{yz}]\ne 0$ we obtain
		\begin{align}\label{vf(e_z)(uu)-ne-vf(e_z)(ww)}
			\vf(e_z)(u,u)\ne \vf(e_z)(w,w).
		\end{align}
		
		Take $a<b$ such that $\0(e_{ab})=e_{uw}$. Assume that $x\not\in\{a,b\}$. Then $[e_x,e_{ab}]=0$ implies $\vf(e_x)(u,u)=\vf(e_x)(w,w)$ contradicting \cref{vf(e_x)(uu)-ne-vf(e_x)(ww)}. Hence, $x\in\{a,b\}$. If $b=x$, then $[e_{ab},e_{xy}]=e_{ay}\ne 0$, so $[e_{uw},e_{uv}]\ne 0$ by \cref{0-strong-comm-pres}, a contradiction. Therefore, $a=x$. Finally, assuming $b\ne z$ we get $[e_z,e_{ab}]=[e_z,e_{xb}]=0$, which yields $\vf(e_z)(u,u)=\vf(e_z)(w,w)$, a contradiction with \cref{vf(e_z)(uu)-ne-vf(e_z)(ww)}. Thus, $(a,b)=(x,z)$.
		
		In the case where $\0(e_{xy})=e_{vw}$ and $\0(e_{yz})=e_{uv}$ we also have the inequalities \cref{vf(e_x)(uu)-ne-vf(e_x)(ww),vf(e_z)(uu)-ne-vf(e_z)(ww)}, and the proof is analogous.
	\end{proof}
	
	\begin{corollary}\label{[0(e_xy)_0(e_yz)]=+-0(e_xz)}
		Let $x<y<z$ in $X$. Then either $[\0(e_{xy}),\0(e_{yz})]=\0(e_{xy})\0(e_{yz})=\0(e_{xz})$ or $[\0(e_{xy}),\0(e_{yz})]=-\0(e_{yz})\0(e_{xy})=-\0(e_{xz})$.
	\end{corollary}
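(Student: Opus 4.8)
The plan is to reduce everything to the two configurations supplied by \cref{theta_e_xy_theta_e_yz} and then read off the Lie bracket from the basic multiplication rule $e_{ab}e_{cd}=\dl_{bc}e_{ad}$ of the incidence algebra. The statement is a direct consequence of \cref{theta_e_xy_theta_e_yz,theta_e_xz}, so the work is essentially a short case analysis together with one elementary product computation.

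First I would apply \cref{theta_e_xy_theta_e_yz} to the triple $x<y<z$ to obtain $u<v<w$ in $X$ with $\{\0(e_{xy}),\0(e_{yz})\}=\{e_{uv},e_{vw}\}$. This forces two cases: either $\0(e_{xy})=e_{uv}$ and $\0(e_{yz})=e_{vw}$, or $\0(e_{xy})=e_{vw}$ and $\0(e_{yz})=e_{uv}$. In both cases \cref{theta_e_xz} identifies the image of the composite interval, namely $\0(e_{xz})=e_{uw}$.

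Next I would perform the elementary computation, using that $e_{uv}e_{vw}=e_{uw}$ (the middle indices match) while $e_{vw}e_{uv}=0$ (the middle indices $w$ and $u$ differ, as $u<w$). In the first case this gives
\begin{align*}
	[\0(e_{xy}),\0(e_{yz})]=[e_{uv},e_{vw}]=e_{uv}e_{vw}=e_{uw}=\0(e_{xz}),
\end{align*}
which is precisely the first alternative $[\0(e_{xy}),\0(e_{yz})]=\0(e_{xy})\0(e_{yz})=\0(e_{xz})$. In the second case,
\begin{align*}
	[\0(e_{xy}),\0(e_{yz})]=[e_{vw},e_{uv}]=-e_{uv}e_{vw}=-e_{uw}=-\0(e_{xz}),
\end{align*}
and since $-\0(e_{yz})\0(e_{xy})=-e_{uv}e_{vw}=-e_{uw}$, this yields the second alternative.

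I do not expect any genuine obstacle here: the content is entirely carried by \cref{theta_e_xy_theta_e_yz,theta_e_xz}, and the only point requiring care is the bookkeeping of which standard basis element plays the role of $e_{uv}$ versus $e_{vw}$ in each case, so that the sign of the bracket and the correct one-sided product are attached to the right alternative.
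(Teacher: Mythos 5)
Your proof is correct and is exactly the argument the paper intends: the corollary is stated without proof precisely because it follows from \cref{theta_e_xy_theta_e_yz,theta_e_xz} together with the product rule for the standard basis, which is what you carried out. The only cosmetic remark is that writing $e_{ab}e_{cd}=\dl_{bc}e_{ad}$ overloads $\dl$, which the paper reserves for the identity of $I(X,K)$ rather than the Kronecker delta.
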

	
	\begin{remark}\label{0_vf-inv}
		$\0_{\vf\m}=(\0_\vf)\m$ and $\sg_{\vf\m}(u,v)=\sg_\vf(x,y)\m$, where $\0_\vf(e_{xy}) = e_{uv}$. Indeed, if $\0_\vf(e_{xy}) = e_{uv}$, then 
		\begin{align*}
			e_{xy} & =\vf^{-1}(\vf(e_{xy}))=\vf^{-1}(\sg_{\vf}(x,y)\0_{\vf}(e_{xy})+\nu_{\vf}(e_{xy})) \\
			& = \sg_{\vf}(x,y)\vf^{-1}(\0_{\vf}(e_{xy}))+\vf^{-1}(\nu_{\vf}(e_{xy})) \\
			& = \sg_{\vf}(x,y)\sg_{\vf^{-1}}(u,v)\0_{\vf^{-1}}(\0_{\vf}(e_{xy}))+\sg_{\vf}(x,y)\nu_{\vf^{-1}}(e_{uv})+\vf^{-1}(\nu_{\vf}(e_{xy})).
		\end{align*}
		Since $\0_{\vf^{-1}}(\0_{\vf}(e_{xy}))\in\B$ and $\sg_{\vf}(x,y)\nu_{\vf^{-1}}(e_{uv})+\vf^{-1}(\nu_{\vf}(e_{xy}))\in D(X,K)$ we have $e_{xy}=\0_{\vf^{-1}}(\0_{\vf}(e_{xy}))$ and $\sg_{\vf}(x,y)\sg_{\vf^{-1}}(u,v)=1$.
	\end{remark}
	
	\begin{lemma}\label{0_monotone}
		The bijection $\0$ is monotone on maximal chains in $X$.
	\end{lemma}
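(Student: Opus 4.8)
The plan is to determine $\0$ on the ``atoms'' $e_{u_iu_{i+1}}$ of a fixed maximal chain, to show that their images form a path that is consistently increasing or consistently decreasing, to extend this to all $e_{u_iu_j}$ by \cref{theta_e_xz}, and finally to verify that the chain obtained in the codomain is itself maximal. So fix a maximal chain $C\colon u_1<u_2<\dots<u_m$ in $X$; since $C$ is maximal we have $l(u_i,u_{i+1})=1$ for every $i$, together with $u_1\in\Min(X)$ and $u_m\in\Max(X)$.

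First I would pin down the direction. Applying \cref{theta_e_xy_theta_e_yz} to $u_1<u_2<u_3$ yields $v_1<v_2<v_3$ with $\{\0(e_{u_1u_2}),\0(e_{u_2u_3})\}=\{e_{v_1v_2},e_{v_2v_3}\}$, and \cref{theta_e_xz} then gives $\0(e_{u_1u_3})=e_{v_1v_3}$. I may assume the ``increasing'' alternative $\0(e_{u_1u_2})=e_{v_1v_2}$, $\0(e_{u_2u_3})=e_{v_2v_3}$, the other alternative being treated symmetrically. I then proceed by induction on $k$, the inductive hypothesis being that $v_1<\dots<v_k$ have been constructed with $\0(e_{u_iu_j})=e_{v_iv_j}$ for all $1\le i<j\le k$. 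For the step, \cref{theta_e_xy_theta_e_yz} applied to $u_{k-1}<u_k<u_{k+1}$ forces $\0(e_{u_ku_{k+1}})$ to share a vertex with $\0(e_{u_{k-1}u_k})=e_{v_{k-1}v_k}$, leaving exactly two possibilities: either $\0(e_{u_ku_{k+1}})=e_{v_kr}$ for some $r>v_k$ (the desired increasing continuation, after which I set $v_{k+1}=r$ and propagate $\0(e_{u_iu_{k+1}})=e_{v_iv_{k+1}}$ through \cref{theta_e_xz}), or the ``reversed'' case $\0(e_{u_ku_{k+1}})=e_{pv_{k-1}}$ with $p<v_{k-1}$.

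The crux is to exclude this reversed case; this is the step I expect to be the main obstacle, since everything else is bookkeeping with the already established lemmas. Assume it occurs. Then from $\0(e_{u_{k-1}u_k})=e_{v_{k-1}v_k}$ and $\0(e_{u_ku_{k+1}})=e_{pv_{k-1}}$, \cref{theta_e_xz} yields $\0(e_{u_{k-1}u_{k+1}})=e_{pv_k}$. Now $e_{u_{k-2}u_{k-1}}$ and $e_{u_{k-1}u_{k+1}}$ do \emph{not} commute (their product is $e_{u_{k-2}u_{k+1}}\ne0$), yet their images $e_{v_{k-2}v_{k-1}}$ (supplied by the inductive hypothesis, available since $k\ge3$) and $e_{pv_k}$ satisfy $[e_{v_{k-2}v_{k-1}},e_{pv_k}]=0$ because $p<v_{k-1}$ and $v_{k-2}<v_k$. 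This contradicts \cref{0-strong-comm-pres}. Hence the increasing alternative persists at every step, and \cref{theta_e_xz} delivers $\0(e_{u_iu_j})=e_{v_iv_j}$ for all $1\le i<j\le m$.

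It remains to check that $D\colon v_1<\dots<v_m$ is a maximal chain. That $v_1<\dots<v_m$ is a chain is immediate from the displayed equalities. Since $u_1\in\Min(X)$ and $u_m\in\Max(X)$, \cref{center-of-B} gives $e_{u_1u_m}\in Z(\B)$, so $\0(e_{u_1u_m})=e_{v_1v_m}\in Z(\B)$ by \cref{theta_fix_center}, whence $v_1\in\Min(X)$ and $v_m\in\Max(X)$. Finally, to see that each $v_i<v_{i+1}$ is a covering relation, suppose $v_i<w<v_{i+1}$. By \cref{0_vf-inv} the inverse bijection equals $\0_{\vf\m}=\0\m$, which is of the same type, so \cref{theta_e_xy_theta_e_yz} and \cref{theta_e_xz} apply to it and factor $\0\m(e_{v_iv_{i+1}})$ through an intermediate vertex; but $\0\m(e_{v_iv_{i+1}})=e_{u_iu_{i+1}}$ with $l(u_i,u_{i+1})=1$, a contradiction. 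Thus $D$ is maximal and $\0$ is increasing on $C$; the symmetric treatment of the ``decreasing'' base step shows that in the remaining case $\0$ is decreasing on $C$.
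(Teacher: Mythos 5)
Your proposal is correct and follows essentially the same route as the paper: the existence of the chain $v_1<\dots<v_m$, which the paper delegates to the proof of \cite[Lemma 5.4]{FKS} (with \cref{theta_e_xz} replacing the corresponding FKS lemma), is exactly the induction you spell out, and your exclusion of the reversed case via \cref{0-strong-comm-pres} is a sound reconstruction of that outsourced step. Your maximality verification (the center argument via \cref{theta_fix_center,center-of-B}, plus the covering-relation argument applying \cref{theta_e_xy_theta_e_yz,theta_e_xz} to $\0\m$ through \cref{0_vf-inv}) coincides with the paper's.
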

	\begin{proof}
		Let $u_1<u_2<\dots<u_m$ be a maximal chain in $X$. The proof of the existence of a chain $v_1<v_2<\dots<v_m$ satisfying $\0(e_{u_iu_j})=e_{v_iv_j}$ for all $1\le i<j\le m$, or $\0(e_{u_iu_j})=e_{v_{m-j+1}v_{m-i+1}}$ for all $1\le i<j\le m$, is exactly the same as the proof of \cite[Lemma 5.4]{FKS}, just replacing \cite[Lemma 5.2]{FKS} by \cref{theta_e_xz}. It remains to prove that the chain $v_1<v_2<\dots<v_m$ is maximal. 
		
		Since $u_1\in \Min(X)$ and $u_m\in \Max(X)$, then $e_{u_1u_m}\in Z(\B)$. Hence $e_{v_1v_m}\in Z(\B)$, by \cref{theta_fix_center}. Thus, $v_1\in \Min(X)$ and $v_m\in \Max(X)$, by \cref{center-of-B}. Now, let $i\in \{1,\ldots,m-1\}$ and consider $u_i<u_{i+1}$. Suppose $\0(e_{u_iu_{i+1}})=e_{v_iv_{i+1}}$. If there were $v\in X$ such that $v_i<v<v_{i+1}$, then there would be $w_i<w<w_{i+1}$ such that $\0\m(e_{v_iv})=e_{w_iw}$ and $\0\m(e_{vv_{i+1}})=e_{ww_{i+1}}$, or $\0\m(e_{v_iv})=e_{ww_{i+1}}$ and $\0\m(e_{vv_{i+1}})=e_{w_iw}$, by \cref{theta_e_xy_theta_e_yz,0_vf-inv}. Consequently, $\0\m(e_{v_iv_{i+1}})=e_{w_iw_{i+1}}$, by \cref{theta_e_xz}, and so $\0(e_{w_iw_{i+1}})=\0(e_{u_iu_{i+1}})$. Thus $u_i=w_i$ and $u_{i+1}=w_{i+1}$, and so $u_i<w<u_{i+1}$, which would contradict the maximality of $u_1<u_2<\dots<u_m$. The case $\0(e_{u_iu_{i+1}})=e_{v_{m-i}v_{m-i+1}}$ is analogous. 
	\end{proof}

	\begin{lemma}\label{[0(e_xy)_nu(e_yz)]=[nu(e_xy)_0(e_yz)]=0}
		Let $x<y<z$. Then $[\0(e_{xy}),\nu(e_{yz})]=0$ and $[\nu(e_{xy}),\0(e_{yz})]=0$.
	\end{lemma}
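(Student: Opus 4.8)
The plan is to exploit the basic relation \cref{[vf(e_xz)_vf(e_zy)]=const} of \cref{vf-on-the-natural-basis}. Applied to the chain $x<y<z$ (with $x$ at the bottom, $z$ at the top and $y$ in the middle) it reads $[\vf(e_{xy}),\vf(e_{yz})]=[\vf(e_x),\vf(e_{xz})]$, and the idea is to expand both sides via \cref{vf(e_xy)=0(e_xy)+nu(e_xy)} and compare coefficients in the standard basis. The first step is to record, using \cref{theta_e_xz}, that $\{\0(e_{xy}),\0(e_{yz})\}=\{e_{uv},e_{vw}\}$ and $\0(e_{xz})=e_{uw}$ for some $u<v<w$; in particular these three basis elements are pairwise distinct, hence linearly independent.

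Next I would expand the left-hand side. Since $\nu(e_{xy}),\nu(e_{yz})\in D(X,K)$ commute,
\begin{align*}
	[\vf(e_{xy}),\vf(e_{yz})]=\sg(x,y)\sg(y,z)[\0(e_{xy}),\0(e_{yz})]+\sg(x,y)[\0(e_{xy}),\nu(e_{yz})]+\sg(y,z)[\nu(e_{xy}),\0(e_{yz})].
\end{align*}
By \cref{[0(e_xy)_0(e_yz)]=+-0(e_xz)} the first term equals $\pm\sg(x,y)\sg(y,z)e_{uw}$. As the commutator of a basis element $e_{ab}$ with a diagonal element is a scalar multiple of $e_{ab}$ (see the computation in the proof of \cref{basis-of-C_I(d)}), the term $[\0(e_{xy}),\nu(e_{yz})]$ is a multiple of $\0(e_{xy})$ and $[\nu(e_{xy}),\0(e_{yz})]$ is a multiple of $\0(e_{yz})$. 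Thus the left-hand side lies in $\gen{e_{uv}}\oplus\gen{e_{vw}}\oplus\gen{e_{uw}}$, with the coefficient of $\0(e_{xy})$ being $\sg(x,y)$ times the scalar of $[\0(e_{xy}),\nu(e_{yz})]$ and the coefficient of $\0(e_{yz})$ being $\sg(y,z)$ times the scalar of $[\nu(e_{xy}),\0(e_{yz})]$. On the other hand, $\vf(e_x)\in D(X,K)$ and $\nu(e_{xz})\in D(X,K)$, so the right-hand side $[\vf(e_x),\vf(e_{xz})]=\sg(x,z)[\vf(e_x),e_{uw}]$ is a scalar multiple of $e_{uw}=\0(e_{xz})$ alone.

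Finally I would compare the two expansions in the linearly independent set $\{e_{uv},e_{vw},e_{uw}\}$: the coefficients of $e_{uv}$ and $e_{vw}$ vanish on the right, forcing $\sg(x,y)[\0(e_{xy}),\nu(e_{yz})]=0$ and $\sg(y,z)[\nu(e_{xy}),\0(e_{yz})]=0$. Since $\sg(x,y),\sg(y,z)\in K^*$, this gives exactly $[\0(e_{xy}),\nu(e_{yz})]=0$ and $[\nu(e_{xy}),\0(e_{yz})]=0$. The argument is symmetric in the two possibilities for $\{\0(e_{xy}),\0(e_{yz})\}$, so only a relabelling is needed to cover both cases. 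I expect no genuine obstacle here: the only point requiring care is the bookkeeping that the three relevant basis vectors $e_{uv},e_{vw},e_{uw}$ are distinct—so that linear independence may be invoked—which is precisely secured by \cref{theta_e_xz} and \cref{[0(e_xy)_0(e_yz)]=+-0(e_xz)}; everything else is a routine coefficient comparison once the correct instance of \cref{[vf(e_xz)_vf(e_zy)]=const} is selected.
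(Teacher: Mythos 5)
Your proof is correct and follows essentially the same route as the paper: the identity $[\vf(e_{xy}),\vf(e_{yz})]=[\vf(e_x),\vf(e_{xz})]$ you invoke from \cref{[vf(e_xz)_vf(e_zy)]=const} is exactly what the paper's proof re-derives inline by applying $\vf$ to $[e_x+e_{xy},e_{yz}-e_{xz}]=0$, and both arguments then expand via \cref{vf(e_xy)=0(e_xy)+nu(e_xy)} and conclude by linear independence of $\{\0(e_{xy}),\0(e_{yz}),\0(e_{xz})\}$ together with \cref{[0(e_xy)_0(e_yz)]=+-0(e_xz)}. (Only a cosmetic remark: the existence of $u<v<w$ with $\{\0(e_{xy}),\0(e_{yz})\}=\{e_{uv},e_{vw}\}$ is \cref{theta_e_xy_theta_e_yz}; \cref{theta_e_xz} then supplies $\0(e_{xz})=e_{uw}$.)
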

	\begin{proof}
		Applying $\vf$ to the product $[e_x+e_{xy},e_{yz}-e_{xz}]=0$ and using \cref{vf(e_xy)=0(e_xy)+nu(e_xy)} we get
		\begin{align*}
			0&=[\vf(e_x)+\sg(x,y)\0(e_{xy})+\nu(e_{xy}),\sg(y,z)\0(e_{yz})+\nu(e_{yz})-\sg(x,z)\0(e_{xz})-\nu(e_{xz})]\\
			&=\sg(y,z)[\vf(e_x),\0(e_{yz})]+\sg(y,z)[\nu(e_{xy}),\0(e_{yz})]-\sg(x,z)[\vf(e_x)+\nu(e_{xy}),\0(e_{xz})]\\
			&\quad+\sg(x,y)[\0(e_{xy}),\nu(e_{yz})]-\sg(x,y)[\0(e_{xy}),\nu(e_{xz})]\\
			&\quad+\sg(x,y)\sg(y,z)[\0(e_{xy}),\0(e_{yz})]-\sg(x,y)\sg(x,z)[\0(e_{xy}),\0(e_{xz})].
		\end{align*}
		Observe that $[\0(e_{xy}),\0(e_{xz})]=0$ by \cref{0-strong-comm-pres}, $[\0(e_{xy}),\nu(e_{xz})]=0$ by \cref{[e_xy_e_uv]=0=>[0(e_xy)_nu(e_uv)]=0} and $[\0(e_{xy}),\0(e_{yz})]=\pm \0(e_{xz})$ by \cref{[0(e_xy)_0(e_yz)]=+-0(e_xz)}. 
		Moreover, it follows from $[e_x,e_{yz}]=0$ that $[\vf(e_x),\0(e_{yz})]=0$. Thus, 
		\begin{align}
			0 &=\sg(x,y)[\0(e_{xy}),\nu(e_{yz})]+\sg(y,z)[\nu(e_{xy}),\0(e_{yz})]\label{elem-of-<0(e_xy)>+elem-of-<0(e_yz)>}\\
			& \quad -\sg(x,z)[\vf(e_x)+\nu(e_{xy}),\0(e_{xz})]\pm\sg(x,y)\sg(y,z)\0(e_{xz}),\label{elem-of-<0(e_xz)>}
		\end{align}
		where $[\0(e_{xy}),\nu(e_{yz})] \in \gen{\0(e_{xy})}$, $[\nu(e_{xy}),\0(e_{yz})]\in \gen{\0(e_{yz})}$ and both appear in \cref{elem-of-<0(e_xy)>+elem-of-<0(e_yz)>} with non-zero coefficients, while \cref{elem-of-<0(e_xz)>} belongs to $\gen{\0(e_{xz})}$. It remains to use the linear independence of $\{\0(e_{xy}),\0(e_{yz}),\0(e_{xz})\}$.
	\end{proof}
	
	\begin{lemma}\label{[0(e_xy)_nu(e_xy)]=0}
		Let $x<y$ such that $\{x,y\}$ is not a maximal chain in $X$. Then $[\0(e_{xy}),\nu(e_{xy})]=0$.
	\end{lemma}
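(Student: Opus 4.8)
The plan is to reduce the statement to a numerical condition on the diagonal element $\nu(e_{xy})$ and then verify it by exploiting basis elements that commute with $e_{xy}$. Write $\0(e_{xy})=e_{uw}$. Since $\nu(e_{xy})\in D(X,K)$, for any $a<b$ one has $[\nu(e_{xy}),e_{ab}]=(\nu(e_{xy})(a,a)-\nu(e_{xy})(b,b))e_{ab}$; in particular $[\nu(e_{xy}),\0(e_{xy})]=0$ is equivalent to $\nu(e_{xy})(u,u)=\nu(e_{xy})(w,w)$. The recurring tool will be the following reading of \cref{[e_xy_e_uv]=0=>[0(e_xy)_nu(e_uv)]=0}: interchanging the two basis elements there, whenever $e_{ab}\in\B$ satisfies $e_{ab}\ne e_{xy}$ and $[e_{xy},e_{ab}]=0$, we obtain $[\nu(e_{xy}),\0(e_{ab})]=0$, so the two endpoints of $\0(e_{ab})$ receive equal values of $\nu(e_{xy})$.

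Since $\{x,y\}$ is not a maximal chain, there is $t\in X$ with $x<t<y$, or $t<x<y$, or $x<y<t$. I first treat $x<t<y$. Here both $e_{xt}$ and $e_{ty}$ lie in $\B$, differ from $e_{xy}$, and commute with $e_{xy}$, so by the tool above $\nu(e_{xy})$ commutes with $\0(e_{xt})$ and with $\0(e_{ty})$. By \cref{theta_e_xy_theta_e_yz,theta_e_xz} there are $u<v<w$ with $\{\0(e_{xt}),\0(e_{ty})\}=\{e_{uv},e_{vw}\}$ and $\0(e_{xy})=e_{uw}$. Hence $\nu(e_{xy})(u,u)=\nu(e_{xy})(v,v)=\nu(e_{xy})(w,w)$, and in particular $\nu(e_{xy})(u,u)=\nu(e_{xy})(w,w)$, which is the desired equality.

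For the remaining two cases, which are symmetric, I do $t<x<y$ (the case $x<y<t$ being analogous). By \cref{theta_e_xy_theta_e_yz,theta_e_xz} choose $p<q<r$ with $\{\0(e_{tx}),\0(e_{xy})\}=\{e_{pq},e_{qr}\}$ and $\0(e_{ty})=e_{pr}$. The element $e_{ty}$ commutes with $e_{xy}$ and differs from it, so the tool gives $[\nu(e_{xy}),e_{pr}]=0$, i.e.\ $\nu(e_{xy})(p,p)=\nu(e_{xy})(r,r)$. The neighbouring element $e_{tx}$ does \emph{not} commute with $e_{xy}$, so instead I invoke \cref{[0(e_xy)_nu(e_yz)]=[nu(e_xy)_0(e_yz)]=0} applied to $t<x<y$, which yields $[\nu(e_{xy}),\0(e_{tx})]=0$. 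As $\0(e_{tx})$ is $e_{pq}$ or $e_{qr}$, this forces $\nu(e_{xy})(q,q)$ to equal $\nu(e_{xy})(p,p)$ or $\nu(e_{xy})(r,r)$; combined with $\nu(e_{xy})(p,p)=\nu(e_{xy})(r,r)$ it follows that the $p$-, $q$- and $r$-entries of $\nu(e_{xy})$ all coincide. Since $\0(e_{xy})$ is one of $e_{pq},e_{qr}$, we again get $[\nu(e_{xy}),\0(e_{xy})]=0$.

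The only genuine subtlety is this last point: when the witnessing element $t$ lies outside the interval $\lf x,y\rf$, the short edge sharing a vertex with $e_{xy}$ (namely $e_{tx}$, resp.\ $e_{yt}$) fails to commute with $e_{xy}$, so the commuting-element argument pins down only the two outer vertices $p,r$ of the relevant chain; pinning down the middle vertex $q$ is exactly what \cref{[0(e_xy)_nu(e_yz)]=[nu(e_xy)_0(e_yz)]=0} is for. Everything else is the routine translation between commutators of the $e_{ab}$'s with diagonal elements and equalities of diagonal entries.
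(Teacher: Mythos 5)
Your proof is correct and follows essentially the same route as the paper's: the same case split on the position of the witness $t$ relative to the interval $\lf x,y\rf$, with \cref{theta_e_xy_theta_e_yz,theta_e_xz} producing the relevant three-element chain, \cref{[e_xy_e_uv]=0=>[0(e_xy)_nu(e_uv)]=0} handling the edges that commute with $e_{xy}$, and \cref{[0(e_xy)_nu(e_yz)]=[nu(e_xy)_0(e_yz)]=0} pinning down the middle vertex exactly when $t$ lies outside the interval. The only (cosmetic) difference is that you treat the two possible assignments of $\0$ to the short edges uniformly, whereas the paper splits each case into the two subcases.
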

	\begin{proof}
		There exists $z\in X$ such that either $z<x$ or $x<z<y$ or $y<z$. 
		
		{\it Case 1.} $z<x$. By \cref{theta_e_xy_theta_e_yz} there are $u<v<w$ such that  $\{\0(e_{zx}),\0(e_{xy})\}=\{e_{uv},e_{vw}\}$. Consider the case $\0(e_{zx})=e_{uv}$ and $\0(e_{xy})=e_{vw}$. Then $\0(e_{zy})=e_{uw}$ by \cref{theta_e_xz}. Now, $[\0(e_{zx}),\nu(e_{xy})]=0$ by \cref{[0(e_xy)_nu(e_yz)]=[nu(e_xy)_0(e_yz)]=0} and $[\0(e_{zy}),\nu(e_{xy})]=0$ by \cref{[e_xy_e_uv]=0=>[0(e_xy)_nu(e_uv)]=0}. It follows from \cref{basis-of-C_I(d)} that $\nu(e_{xy})(u,u)=\nu(e_{xy})(v,v)$ and $\nu(e_{xy})(u,u)=\nu(e_{xy})(w,w)$, whence $\nu(e_{xy})(v,v)=\nu(e_{xy})(w,w)$. The latter is equivalent to $[\0(e_{xy}),\nu(e_{xy})]=0$, by \cref{basis-of-C_I(d)}. The case $\0(e_{zx})=e_{vw}$ and $\0(e_{xy})=e_{uv}$ is analogous.
		
		{\it Case 2.} $x<z<y$. As above, $\{\0(e_{xz}),\0(e_{zy})\}=\{e_{uv},e_{vw}\}$ for some $u<v<w$. Assume $\0(e_{xz})=e_{uv}$ and $\0(e_{zy})=e_{vw}$, so that $\0(e_{xy})=e_{uw}$. Then $[\0(e_{xz}),\nu(e_{xy})]=0$ implies $\nu(e_{xy})(u,u)=\nu(e_{xy})(v,v)$ and $[\0(e_{zy}),\nu(e_{xy})]=0$ implies $\nu(e_{xy})(v,v)=\nu(e_{xy})(w,w)$. Hence, $\nu(e_{xy})(u,u)=\nu(e_{xy})(w,w)$, that is $[\0(e_{xy}),\nu(e_{xy})]=0$. If $\0(e_{xz})=e_{vw}$ and $\0(e_{zy})=e_{uv}$, the proof is analogous.
		
		{\it Case 3.} $z>y$. This case is similar to Case 1.
	\end{proof}
	
	\begin{proposition}\label{nu-almost-central-valued}
		Let $x<y$ such that $\{x,y\}$ is not a maximal chain in $X$. Then $\nu(e_{xy})\in Z(I(X,K))$.
	\end{proposition}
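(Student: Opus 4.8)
The plan is to prove directly that $\nu(e_{xy})$ commutes with every element of $I(X,K)$, which is exactly the assertion $\nu(e_{xy})\in Z(I(X,K))$. Since $\nu(e_{xy})\in D(X,K)$, it automatically commutes with all of $D(X,K)$, and every $f\in I(X,K)$ decomposes as $f=f_D+f_J$ with $f_D\in D(X,K)$ and $f_J\in J(I(X,K))$. Thus it suffices to show that $[\nu(e_{xy}),e_{ab}]=0$ for every $e_{ab}\in\B$. Because $\0$ is a bijection of $\B$ onto itself, this is equivalent to proving $[\nu(e_{xy}),\0(e_{cd})]=0$ for all $e_{cd}\in\B$, and I would organize the argument around the position of $e_{cd}$ relative to $e_{xy}$.

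First I would dispose of the case $e_{cd}=e_{xy}$, where the vanishing $[\0(e_{xy}),\nu(e_{xy})]=0$ is precisely the content of \cref{[0(e_xy)_nu(e_xy)]=0}; this is the only place the hypothesis that $\{x,y\}$ is not a maximal chain enters. Next, when $e_{cd}\ne e_{xy}$ but $[e_{cd},e_{xy}]=0$, I would invoke \cref{[e_xy_e_uv]=0=>[0(e_xy)_nu(e_uv)]=0} with the two basis elements interchanged, obtaining $[\0(e_{cd}),\nu(e_{xy})]=0$ and hence $[\nu(e_{xy}),\0(e_{cd})]=0$.

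The remaining case is $[e_{cd},e_{xy}]\ne 0$. A short computation with the standard basis, using $e_{cd}e_{xy}=\dl_{dx}e_{cy}$ and $e_{xy}e_{cd}=\dl_{yc}e_{xd}$, shows that non-commutation forces either $c=y$ (so that $x<y<d$) or $d=x$ (so that $c<x<y$), and these are mutually exclusive because $x<y$. In either situation $x$, $y$, and the extra point $c$ or $d$ form a chain of length two, so \cref{[0(e_xy)_nu(e_yz)]=[nu(e_xy)_0(e_yz)]=0} applies: with $c=y$ it gives $[\nu(e_{xy}),\0(e_{yd})]=0$, while with $d=x$ it gives $[\0(e_{cx}),\nu(e_{xy})]=0$, i.e.\ $[\nu(e_{xy}),\0(e_{cx})]=0$. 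Having now covered all $e_{cd}\in\B$, I conclude that $\nu(e_{xy})$ commutes with every $\0(e_{cd})$, hence with every element of $\B$ by surjectivity of $\0$, and therefore $\nu(e_{xy})\in Z(I(X,K))$.

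I do not anticipate a genuine obstacle: the three cited results were tailored to exactly these three configurations, so the only real content is recognizing that the commuting/non-commuting dichotomy, combined with the surjectivity of $\0$ on $\B$, exhausts all basis elements. The single point needing a little care is the elementary case analysis identifying non-commutation of $e_{cd}$ with $e_{xy}$ as the condition $c=y$ or $d=x$, which supplies the third point of the chain needed to apply \cref{[0(e_xy)_nu(e_yz)]=[nu(e_xy)_0(e_yz)]=0}.
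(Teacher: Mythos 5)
Your proof is correct and is essentially the paper's own argument: the paper's proof is a one-line citation of exactly the three results you use — \cref{[e_xy_e_uv]=0=>[0(e_xy)_nu(e_uv)]=0,[0(e_xy)_nu(e_yz)]=[nu(e_xy)_0(e_yz)]=0,[0(e_xy)_nu(e_xy)]=0} — together with the bijectivity of $\0$ and the fact that $\nu(e_{xy})\in D(X,K)$. Your three-case analysis (equal basis elements, commuting distinct elements, non-commuting elements giving a chain of length two) simply makes explicit the case check that the paper leaves implicit.
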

	\begin{proof}
		Since $\nu(e_{xy})\in D(X,K)$ and $\0$ is a bijection, the result follows from \cref{[e_xy_e_uv]=0=>[0(e_xy)_nu(e_uv)]=0,[0(e_xy)_nu(e_yz)]=[nu(e_xy)_0(e_yz)]=0,[0(e_xy)_nu(e_xy)]=0}.
	\end{proof}
	
	The following example shows, in particular, that the condition that $\{x,y\}$ is not a maximal chain in $X$ cannot be dropped from \cref{nu-almost-central-valued}.
	\begin{example}
		Let $X=\{1,2,3\}$ with the following Hasse diagram.
		\begin{center}
			\begin{tikzpicture}
			\draw  (0,0)-- (-1,1);
			\draw  (0,0)-- (1,1);
			\draw [fill=black] (-1,1) circle (0.05);
			\draw  (-1.2,1.2) node {$2$};
			\draw [fill=black] (1,1) circle (0.05);
			\draw  (1.2,1.2) node {$3$};
			\draw [fill=black] (0,0) circle (0.05);
			\draw  (0,-0.3) node {$1$};
			\end{tikzpicture}
		\end{center}
		Consider the $K$-linear map $I(X,K)\to I(X,K)$ defined as follows: $\vf(e_1)=e_1$, $\vf(e_{12})=e_{12}+e_1+e_3$, $\vf(e_{13})=e_{13}+e_1+e_2$, $\vf(e_2)=e_2$, $\vf(e_3)=e_3$. Then $\vf$ is a bijective strong commutativity preserver of $I(X,K)$.
		
		Indeed, 
		\begin{align*}
			0&=[\vf(e_1),\vf(e_2)]=[\vf(e_1),\vf(e_3)]=[\vf(e_{12}),\vf(e_{13})]\\
			&=[\vf(e_{12}),\vf(e_3)]=[\vf(e_{13}),\vf(e_2)]=[\vf(e_2),\vf(e_3)]
		\end{align*}
		and
		\begin{align*}
			[\vf(e_1),\vf(e_{12})]&=e_{12}=[\vf(e_{12}),\vf(e_2)],\\
			[\vf(e_1),\vf(e_{13})]&=e_{13}=[\vf(e_{13}),\vf(e_3)],
		\end{align*}
		so by \cref{vf-on-the-natural-basis,[vf(e_x)_vf(e_xy)]-LI} the map $\vf$ is a strong commutativity preserver. It is clearly injective by \cref{ker-vf-strong}, and thus bijective. 
		
		Observe that $\vf(D(X,K))=D(X,K)$ and the corresponding $\0$ and $\nu$ are as follows: $\0=\id_{\B}$, $\nu(e_{12})=e_1+e_3$ and $\nu(e_{13})=e_1+e_2$. In particular, we see that $\nu(e_{12}),\nu(e_{13})\not\in Z(I(X,K))$. Moreover, if we compose $\vf$ with the conjugation by $f=\dl+e_{12}$, then the resulting bijective strong commutativity preservers $e_{12}$ to $e_1+e_3$, because
		\begin{align*}
			(\dl+e_{12})\vf(e_{12})(\dl+e_{12})\m&=(\dl+e_{12})(e_{12}+e_1+e_3)(\dl-e_{12})\\
			&=(e_{12}+e_1+e_3)(\dl-e_{12})\\
			&=e_{12}+e_1+e_3-e_{12}=e_1+e_3.
		\end{align*}
		This shows that \cref{vf(e_xy)=0(e_xy)+nu(e_xy)} does not hold without the diagonality preserving condition.
	\end{example}
	
	
	\begin{definition}
		Let $\vf:I(X,K)\to I(X,K)$ be a bijective strong commutativity preserver that also preserves the diagonality. We say that $\vf$ is \textit{pure} if $\nu(e_{xy})=0$ in \cref{vf(e_xy)=0(e_xy)+nu(e_xy)} for all $x<y$.
	\end{definition} 
	
	\begin{lemma}\label{S_af-circ-vf-pure}
		Let $\vf:I(X,K)\to I(X,K)$ be a bijective strong commutativity preserver that also preserves the diagonality. If we define $\af:I(X,K)\to D(X,K)$ by means of
		\begin{align}
			\af|_{D(X,K)}&=0,\label{af_D(X_K)=0}\\
			\af(\0(e_{xy}))&=\sg(x,y)\m\nu(e_{xy}),\ x<y,\label{af(0(e_xy))=nu(e_xy)}
		\end{align}
		then $S_\af$ is a bijective strong commutativity preserver. Moreover, $S\m_\af\circ\vf$ is pure.
	\end{lemma}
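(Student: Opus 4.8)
The plan is to check that the map $\af$ defined by \cref{af_D(X_K)=0,af(0(e_xy))=nu(e_xy)} fulfils the hypotheses of \cref{S_af-strong-comm-pres}, and then to evaluate $S_\af\m\circ\vf$ on the standard basis. First I would note that $\af$ is well defined with values in $D(X,K)$: since $\0$ is a bijection of $\B$, formula \cref{af(0(e_xy))=nu(e_xy)} prescribes $\af$ on every element of $\B$, and each value $\sg(x,y)\m\nu(e_{xy})$ lies in $D(X,K)$; together with \cref{af_D(X_K)=0} this determines $\af$ on all of $I(X,K)$.

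Among the conditions of \cref{S_af-strong-comm-pres}, all but \cref{[af(e_xy)_e_uv]=0} are immediate from \cref{af_D(X_K)=0}. Indeed, $\af$ vanishes on $D(X,K)$, so $\af(e_z)=\af(e_x)=\af(e_y)=0$, which makes \cref{[af(e_xy)_e_z]=0,[af(e_xy)_e_y]=[e_x_af(e_y)]} trivial; likewise $[\af(e_x),e_{xy}]=0\ne-e_{xy}$ gives \cref{[af(e_x)_e_xy]ne-e_xy}, and $\af(\dl)=0\ne-\dl$ gives \cref{af(dl)-ne-dl}. Thus the only substantial point is \cref{[af(e_xy)_e_uv]=0}.

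To verify \cref{[af(e_xy)_e_uv]=0} I would write two distinct elements of $\B$ as $\0(e_{pq})$ and $\0(e_{rs})$, using that $\0$ is bijective; then $\af(\0(e_{pq}))=\sg(p,q)\m\nu(e_{pq})$, and the required identity reads $[\0(e_{rs}),\nu(e_{pq})]=0$ for all $e_{rs}\ne e_{pq}$. The key step is the case split according to how $e_{rs}$ sits relative to $e_{pq}$: if $[e_{rs},e_{pq}]=0$ this is exactly \cref{[e_xy_e_uv]=0=>[0(e_xy)_nu(e_uv)]=0}; if instead they do not commute, then either $s=p$ (so $r<p<q$) or $r=q$ (so $p<q<s$), and in these two subcases the vanishing is precisely one of the two identities of \cref{[0(e_xy)_nu(e_yz)]=[nu(e_xy)_0(e_yz)]=0}. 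These cases are exhaustive for $e_{rs}\ne e_{pq}$, so \cref{[af(e_xy)_e_uv]=0} holds, and \cref{S_af-strong-comm-pres} then yields that $S_\af$ is a bijective strong commutativity preserver.

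For purity, I would first observe that $S_\af$ restricts to the identity on $D(X,K)$ by \cref{af_D(X_K)=0}, so $S_\af\m$ preserves diagonality, and as the inverse of a bijective strong commutativity preserver it is again one; hence $\psi:=S_\af\m\circ\vf$ is a bijective strong commutativity preserver preserving diagonality. Finally, using \cref{af(0(e_xy))=nu(e_xy)} and then \cref{vf(e_xy)=0(e_xy)+nu(e_xy)}, I compute $S_\af(\sg(x,y)\0(e_{xy}))=\sg(x,y)\0(e_{xy})+\nu(e_{xy})=\vf(e_{xy})$, whence $\psi(e_{xy})=\sg(x,y)\0(e_{xy})$. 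As this is a scalar multiple of a single basis vector in $\B$, its $D(X,K)$-component is zero, so the $\nu$ associated to $\psi$ vanishes and $\psi$ is pure. The main obstacle is the case analysis establishing \cref{[af(e_xy)_e_uv]=0}; everything else is formal once the three earlier facts about $\nu$ and $\0$ are in hand.
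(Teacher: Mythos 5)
Your proof is correct and follows essentially the same route as the paper: reduce everything to condition \cref{[af(e_xy)_e_uv]=0} via \cref{S_af-strong-comm-pres}, verify it by combining \cref{[e_xy_e_uv]=0=>[0(e_xy)_nu(e_uv)]=0} with \cref{[0(e_xy)_nu(e_yz)]=[nu(e_xy)_0(e_yz)]=0} (you spell out the commuting/non-commuting case split that the paper leaves implicit), and then evaluate $S_\af\m\circ\vf$ on $\B$. The only cosmetic difference is that the paper inverts via the identity $S_\af\m=S_{-\af}$, whereas you compute $S_\af(\sg(x,y)\0(e_{xy}))=\vf(e_{xy})$ and apply $S_\af\m$; both computations are equivalent.
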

	\begin{proof}
		Due to \cref{S_af-strong-comm-pres} we only need to show that $\af$ satisfies \cref{[af(e_xy)_e_uv]=0}. For all $e_{xy},e_{uv}\in\B$, by \cref{af(0(e_xy))=nu(e_xy)} we have $[\af(\0(e_{xy})),\0(e_{uv})]=\sg(x,y)\m[\nu(e_{xy}),\0(e_{uv})]$ which is zero whenever $e_{xy}\ne e_{uv}$ in view of \cref{[e_xy_e_uv]=0=>[0(e_xy)_nu(e_uv)]=0,[0(e_xy)_nu(e_yz)]=[nu(e_xy)_0(e_yz)]=0}, whence \cref{[af(e_xy)_e_uv]=0}. Thanks to \cref{af_D(X_K)=0,af(dl)-ne-dl}, $S_\af$ is bijective. In fact, $S\m_\af=S_{-\af}$, so by \cref{vf(e_xy)=0(e_xy)+nu(e_xy),af_D(X_K)=0,af(0(e_xy))=nu(e_xy)}
		\begin{align*}
			(S\m_\af\circ\vf)(e_{xy})&=S_{-\af}(\sg(x,y)\0(e_{xy})+\nu(e_{xy}))=\sg(x,y)S_{-\af}(\0(e_{xy}))+\nu(e_{xy})\\
			&=\sg(x,y)(\0(e_{xy})-\sg(x,y)\m\nu(e_{xy}))+\nu(e_{xy})=\sg(x,y)\0(e_{xy}).
		\end{align*}
		Thus, $S\m_\af\circ\vf$ is pure.
	\end{proof}
	
	So, from now on, composing, if necessary, $\vf$ with a bijective strong commutativity preserver of shift type, we may assume for simplicity that $\vf$ is pure, although some results below hold for non-pure bijective commutativity preservers of $I(X,K)$.
	
	\begin{definition}\label{cxy}
		Fixed $\varphi$, we attach to it a map $c:X^2_{<}\to K$ defined by 
		\begin{align}\label{c(xy)=vf(e_x)(uu)-vf(e_x)(vv)}
			c(x,y)=\vf(e_x)(u,u)-\vf(e_x)(v,v),
		\end{align}
		where $\0(e_{xy})=e_{uv}$.
	\end{definition}
	
	\begin{lemma}\label{vf(e_x)(u_u)-vf(e_x)(v_v)-and-vf(e_y)(u_u)-vf(e_y)(v_v)}
		For all $x<y$ we have $c(x,y)\in K^*$. Moreover, if $\0(e_{xy})=e_{uv}$, then for all $z\in X$
		\begin{align}\label{vf(e_z)(u_u)-vf(e_z)(v_v)}
			[\vf(e_z),\0(e_{xy})](u,v)=\vf(e_z)(u,u)-\vf(e_z)(v,v)=
			\begin{cases}
				c(x,y), & z=x,\\
				-c(x,y), & z=y,\\
				0, & z\not\in\{x,y\}.
			\end{cases}
		\end{align}
	\end{lemma}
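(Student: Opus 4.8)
The plan is to compute the commutator $[\vf(e_z),\0(e_{xy})]$ directly and then read off its $(u,v)$-entry. Write $\0(e_{xy})=e_{uv}$. Since $\vf$ preserves diagonality we have $\vf(e_z)\in D(X,K)$, so $\vf(e_z)=\sum_{w\in X}\vf(e_z)(w,w)e_w$, and using $[e_w,e_{uv}]=(\dl(w,u)-\dl(w,v))e_{uv}$ I obtain
\begin{align*}
[\vf(e_z),\0(e_{xy})]=(\vf(e_z)(u,u)-\vf(e_z)(v,v))e_{uv}.
\end{align*}
Taking the $(u,v)$-entry gives the first equality of \cref{vf(e_z)(u_u)-vf(e_z)(v_v)} for every $z\in X$, and it remains to evaluate $\vf(e_z)(u,u)-\vf(e_z)(v,v)$ in the three cases.

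For $z=x$ this difference is exactly $c(x,y)$ by \cref{c(xy)=vf(e_x)(uu)-vf(e_x)(vv)}. To prove $c(x,y)\in K^*$ I would use that $[e_x,e_{xy}]=e_{xy}\ne 0$, so $[\vf(e_x),\vf(e_{xy})]\ne 0$ because $\vf$ is a strong commutativity preserver. By \cref{vf(e_xy)=0(e_xy)+nu(e_xy)} we have $\vf(e_{xy})=\sg(x,y)\0(e_{xy})+\nu(e_{xy})$ with $\nu(e_{xy})\in D(X,K)$, and since $\vf(e_x)\in D(X,K)$ commutes with $\nu(e_{xy})$ the radical summand drops out, leaving $[\vf(e_x),\vf(e_{xy})]=\sg(x,y)c(x,y)e_{uv}$; as $\sg(x,y)\in K^*$, this forces $c(x,y)\ne 0$. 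For $z\notin\{x,y\}$ I would instead start from $[e_z,e_{xy}]=0$, which yields $[\vf(e_z),\vf(e_{xy})]=0$; the same cancellation of the $\nu$-term gives $\sg(x,y)[\vf(e_z),\0(e_{xy})]=0$, so $\vf(e_z)(u,u)-\vf(e_z)(v,v)=0$.

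Finally, for $z=y$ I would invoke the identity $[\vf(e_x),\vf(e_{xy})]=[\vf(e_{xy}),\vf(e_y)]$ supplied by \cref{[vf(e_xz)_vf(e_zy)]=const}. Computing both sides via the formula above (again the diagonal part $\nu(e_{xy})$ contributes nothing), the left-hand side equals $\sg(x,y)c(x,y)e_{uv}$ while the right-hand side equals $-\sg(x,y)(\vf(e_y)(u,u)-\vf(e_y)(v,v))e_{uv}$; cancelling $\sg(x,y)\in K^*$ gives $\vf(e_y)(u,u)-\vf(e_y)(v,v)=-c(x,y)$. The computations are routine; the one point to keep in view throughout is that the diagonal summand $\nu(e_{xy})$ of $\vf(e_{xy})$ commutes with every diagonal $\vf(e_z)$, so that only the term $\sg(x,y)\0(e_{xy})$ matters, and the mild obstacle is recognising that the $z=y$ case is not self-contained but must be closed using the basis relation from \cref{vf-on-the-natural-basis}.
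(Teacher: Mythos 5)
Your proposal is correct and follows essentially the same route as the paper's own proof: both first reduce $[\vf(e_z),\0(e_{xy})]$ to $(\vf(e_z)(u,u)-\vf(e_z)(v,v))e_{uv}$ using $\vf(e_z)\in D(X,K)$, then settle $z=x$ by strongness of $\vf$ (giving $c(x,y)\ne 0$), $z=y$ by the relation $[\vf(e_x),\vf(e_{xy})]=[\vf(e_{xy}),\vf(e_y)]$, and $z\not\in\{x,y\}$ by commutativity preservation, with the diagonal term $\nu(e_{xy})$ dropping out throughout. The only cosmetic difference is that you cite \cref{[vf(e_xz)_vf(e_zy)]=const} for the $z=y$ case, whereas the paper re-derives that identity inline from $[e_x+e_y,e_{xy}]=0$ (which is also how it is obtained in \cref{vf-on-the-natural-basis}, so the substance is identical).
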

	\begin{proof}
		We are going to prove \cref{vf(e_z)(u_u)-vf(e_z)(v_v)}, and it will follow from the proof that $c(x,y)\in K^*$. Observe that 
		\begin{align}\label{[vf(e_z)_0(e_xy)]=(vf(e_z)(u_u)-vf(e_z)(v_v))e_uv}
			[\vf(e_z),\0(e_{xy})]=[\vf(e_z),e_{uv}]=(\vf(e_z)(u,u)-\vf(e_z)(v,v))e_{uv}
		\end{align}
		because $\vf(e_z)\in D(X,K)$. So the first equality of \cref{vf(e_z)(u_u)-vf(e_z)(v_v)} holds.
		
		{\it Case 1.} $z=x$. Since $[e_z,e_{xy}]=[e_x,e_{xy}]\neq 0$, we have $[\vf(e_z),\vf(e_{xy})]\neq 0$. Thus, by \cref{vf(e_xy)=0(e_xy)+nu(e_xy),[vf(e_z)_0(e_xy)]=(vf(e_z)(u_u)-vf(e_z)(v_v))e_uv,c(xy)=vf(e_x)(uu)-vf(e_x)(vv)}, 
		\begin{align*}
			0\ne [\vf(e_z),\0(e_{xy})](u,v)=\vf(e_x)(u,u)-\vf(e_x)(v,v)=c(x,y). 
		\end{align*}
		
		{\it Case 2.} $z=y$. Since $[e_x+e_y,e_{xy}]=0$, then $[\vf(e_x)+\vf(e_y),\vf(e_{xy})]=0$, so
		\begin{align*}
			[\vf(e_z),\vf(e_{xy})]=[\vf(e_y),\vf(e_{xy})]=-[\vf(e_x),\vf(e_{xy})].
		\end{align*}
		It follows from \cref{vf(e_xy)=0(e_xy)+nu(e_xy)} and the result of Case 1 that
		\begin{align*}
			[\vf(e_z),\0(e_{xy})](u,v)=-[\vf(e_x),\0(e_{xy})](u,v)=-c(x,y).
		\end{align*}
		
		{\it Case 3.} $z\not\in\{x,y\}$. Then $[e_z,e_{xy}]=0$, so $[\vf(e_z),\vf(e_{xy})]=0$. Therefore, $[\vf(e_z),\0(e_{xy})]=0$.
	\end{proof}
	
	\begin{lemma}\label{defn-of-c(x_z)}
		If $x<y<z$, then
		\begin{align}
			\0(e_{xz})=\0(e_{xy})\0(e_{yz})&\impl c(x,z)\sg(x,z)= \sg(x,y)\sg(y,z),\label{c(x_z)sg(x_z)=sg(x_y)sg(y_z)}\\
			\0(e_{xz})=\0(e_{yz})\0(e_{xy})&\impl c(x,z)\sg(x,z)=-\sg(x,y)\sg(y,z).\label{c(x_z)sg(x_z)=-sg(x_y)sg(y_z)}
		\end{align}
	\end{lemma}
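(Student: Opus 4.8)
The plan is to extract both implications from a single identity supplied by the commutativity-preserving property. Applying the first equality of \cref{[vf(e_xz)_vf(e_zy)]=const} (with the roles of the two larger elements interchanged, so that the three points read $x<y<z$) gives $[\vf(e_x),\vf(e_{xz})]=[\vf(e_{xy}),\vf(e_{yz})]$, and I would evaluate each side and compare the coefficient of the basis element $\0(e_{xz})$. For the left-hand side, write $\0(e_{xz})=e_{uv}$ and use \cref{vf(e_xy)=0(e_xy)+nu(e_xy)}: since $\vf(e_x)$ and $\nu(e_{xz})$ both lie in $D(X,K)$ they commute, so $[\vf(e_x),\vf(e_{xz})]=\sg(x,z)[\vf(e_x),\0(e_{xz})]$, and by \cref{[vf(e_z)_0(e_xy)]=(vf(e_z)(u_u)-vf(e_z)(v_v))e_uv} together with the definition \cref{c(xy)=vf(e_x)(uu)-vf(e_x)(vv)} of $c$ this equals $c(x,z)\sg(x,z)\0(e_{xz})$.

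For the right-hand side, expanding $[\vf(e_{xy}),\vf(e_{yz})]$ by \cref{vf(e_xy)=0(e_xy)+nu(e_xy)} produces four terms. The diagonal-diagonal bracket $[\nu(e_{xy}),\nu(e_{yz})]$ vanishes, and the two mixed brackets $[\0(e_{xy}),\nu(e_{yz})]$ and $[\nu(e_{xy}),\0(e_{yz})]$ vanish by \cref{[0(e_xy)_nu(e_yz)]=[nu(e_xy)_0(e_yz)]=0}, so only $[\vf(e_{xy}),\vf(e_{yz})]=\sg(x,y)\sg(y,z)[\0(e_{xy}),\0(e_{yz})]$ survives. Then \cref{[0(e_xy)_0(e_yz)]=+-0(e_xz)} splits into the two cases: the inner bracket is $\0(e_{xz})$ when $\0(e_{xz})=\0(e_{xy})\0(e_{yz})$ and $-\0(e_{xz})$ when $\0(e_{xz})=\0(e_{yz})\0(e_{xy})$.

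Comparing the two computed expressions and reading off the coefficient of the nonzero basis element $\0(e_{xz})$ then yields $c(x,z)\sg(x,z)=\sg(x,y)\sg(y,z)$ in the first case and $c(x,z)\sg(x,z)=-\sg(x,y)\sg(y,z)$ in the second, which are exactly the two stated implications. I do not expect a genuine obstacle here: once the cross terms are seen to vanish the argument is pure bookkeeping, and that vanishing is precisely the content of \cref{[0(e_xy)_nu(e_yz)]=[nu(e_xy)_0(e_yz)]=0}. The only point worth flagging is that this reasoning never uses purity of $\vf$, so the lemma holds in the stated generality; if one has already reduced to the pure case, the terms involving $\nu$ are simply absent and the right-hand side is immediately $\sg(x,y)\sg(y,z)[\0(e_{xy}),\0(e_{yz})]$.
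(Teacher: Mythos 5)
Your proposal is correct and is essentially the paper's own argument: the identity $[\vf(e_x),\vf(e_{xz})]=[\vf(e_{xy}),\vf(e_{yz})]$ that you take from \cref{[vf(e_xz)_vf(e_zy)]=const} is exactly what the paper extracts from the commuting pair $[e_x+e_{xy},e_{yz}-e_{xz}]=0$ via the displayed equations \cref{elem-of-<0(e_xy)>+elem-of-<0(e_yz)>,elem-of-<0(e_xz)>} in the proof of \cref{[0(e_xy)_nu(e_yz)]=[nu(e_xy)_0(e_yz)]=0}, and both proofs then kill the $\nu$ cross-terms (via \cref{[0(e_xy)_nu(e_yz)]=[nu(e_xy)_0(e_yz)]=0} and \cref{[e_xy_e_uv]=0=>[0(e_xy)_nu(e_uv)]=0}) and read off the coefficient of $\0(e_{xz})$ using \cref{[0(e_xy)_0(e_yz)]=+-0(e_xz)} and \cref{vf(e_z)(u_u)-vf(e_z)(v_v)}. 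Your closing remark that purity is never used agrees with the paper as well, whose proof likewise carries the $\nu$ terms explicitly.
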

	\begin{proof}
		By \cref{elem-of-<0(e_xy)>+elem-of-<0(e_yz)>,elem-of-<0(e_xz)>} and \cref{[e_xy_e_uv]=0=>[0(e_xy)_nu(e_uv)]=0} we have
		\begin{align*}
			-\sg(x,z)[\vf(e_x),\0(e_{xz})]\pm\sg(x,y)\sg(y,z)\0(e_{xz})=0
		\end{align*}
		depending on whether $\0(e_{xz})=\0(e_{xy})\0(e_{yz})$ or $\0(e_{xz})=\0(e_{yz})\0(e_{xy})$. So, thanks to \cref{vf(e_z)(u_u)-vf(e_z)(v_v)}, $c(x,z)\sg(x,z)=\pm\sg(x,y)\sg(y,z)$.
	\end{proof}
	
	\begin{lemma}\label{c(x_y)=c(u_y)=c(x_v)}
		If $x<y$, then
		\begin{enumerate}
			\item for all $u<x$ we have $c(u,y)=c(x,y)$;\label{c(u_y)=c(x_y)}
			\item for all $v>y$ we have $c(x,v)=c(x,y)$.\label{c(x_v)=c(x_y)}
		\end{enumerate}
	\end{lemma}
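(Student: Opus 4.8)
The engine of the proof is equation \cref{vf(e_z)(u_u)-vf(e_z)(v_v)} of \cref{vf(e_x)(u_u)-vf(e_x)(v_v)-and-vf(e_y)(u_u)-vf(e_y)(v_v)}: writing $\0(e_{xy})=e_{uv}$, the scalar $\vf(e_z)(u,u)-\vf(e_z)(v,v)$ equals $c(x,y)$, $-c(x,y)$ or $0$ according as $z=x$, $z=y$, or $z\notin\{x,y\}$. The last clause is what does the work: it says that $\vf(e_z)$ takes the same value on both endpoints of $\0(e_{xy})$ as soon as $z$ lies outside $\{x,y\}$. Reading the first two clauses backwards, the same scalar $c(x,y)$ can be computed \emph{either} from the bottom vertex, as $c(x,y)=\vf(e_x)(u,u)-\vf(e_x)(v,v)$, \emph{or} from the top vertex, as $c(x,y)=\vf(e_y)(v,v)-\vf(e_y)(u,u)$. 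In each part I choose the formula for which the common vertex is the one excluded from the relevant interval.

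For \cref{c(x_v)=c(x_y)} I take the chain $x<y<v$. By \cref{theta_e_xy_theta_e_yz,theta_e_xz} the images $\0(e_{xy}),\0(e_{yv}),\0(e_{xv})$ form a $3$-chain, say $\0(e_{xy})=e_{pq}$, $\0(e_{yv})=e_{qr}$ and $\0(e_{xv})=e_{pr}$ (the reversed orientation is identical). Using the bottom-vertex formula, $c(x,y)=\vf(e_x)(p,p)-\vf(e_x)(q,q)$ and $c(x,v)=\vf(e_x)(p,p)-\vf(e_x)(r,r)$. Since $x\notin\{y,v\}$, the ``$0$ otherwise'' clause applied to $\0(e_{yv})=e_{qr}$ gives $\vf(e_x)(q,q)=\vf(e_x)(r,r)$, which collapses $c(x,v)$ onto $c(x,y)$.

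For \cref{c(u_y)=c(x_y)} I take the chain $u<x<y$; now the vertex shared by $c(u,y)$ and $c(x,y)$ is the \emph{top} vertex $y$, so I switch to the top-vertex formula. By \cref{theta_e_xy_theta_e_yz,theta_e_xz} the images $\0(e_{ux}),\0(e_{xy}),\0(e_{uy})$ form a $3$-chain with $\0(e_{uy})$ the outer element; in the orientation $\0(e_{ux})=e_{pq}$, $\0(e_{xy})=e_{qr}$, $\0(e_{uy})=e_{pr}$ one has $c(x,y)=\vf(e_y)(r,r)-\vf(e_y)(q,q)$ and $c(u,y)=\vf(e_y)(r,r)-\vf(e_y)(p,p)$. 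Since $y\notin\{u,x\}$, the ``$0$ otherwise'' clause applied to $\0(e_{ux})=e_{pq}$ forces $\vf(e_y)(p,p)=\vf(e_y)(q,q)$, and this single coincidence identifies $c(u,y)$ with $c(x,y)$.

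The only bookkeeping is the pair of orientations furnished by \cref{theta_e_xy_theta_e_yz} (whether $\0$ runs up or down the chain). But the decisive input is always the symmetric ``$0$ otherwise'' clause of \cref{vf(e_z)(u_u)-vf(e_z)(v_v)}, which is insensitive to the orientation, so both orientations close by the identical one-line computation. I therefore do not anticipate a genuine obstacle beyond choosing the correct ($\vf(e_x)$ versus $\vf(e_y)$) expression for $c$ and recording the endpoint identifications coming from \cref{theta_e_xz}.
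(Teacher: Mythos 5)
Your proof is correct and is essentially the paper's argument written out in coordinates: both rest on \cref{vf(e_z)(u_u)-vf(e_z)(v_v)} of \cref{vf(e_x)(u_u)-vf(e_x)(v_v)-and-vf(e_y)(u_u)-vf(e_y)(v_v)} applied to the image $3$-chain $e_{pq},e_{qr},e_{pr}$ furnished by \cref{theta_e_xy_theta_e_yz,theta_e_xz}, with the decisive input being that $\vf(e_y)$ (resp.\ $\vf(e_x)$) takes equal values at the two endpoints of $\0(e_{ux})$ (resp.\ $\0(e_{yv})$). The paper packages the same telescoping of diagonal entries as a Jacobi-identity computation of $[\0(e_{uy}),\vf(e_y)]$ in two ways (via \cref{[0(e_xy)_0(e_yz)]=+-0(e_xz)} and the vanishing of $[\vf(e_y),\0(e_{ux})]$, which is exactly your ``$0$ otherwise'' clause), so your identity $c(u,y)=\vf(e_y)(r,r)-\vf(e_y)(p,p)=\vf(e_y)(r,r)-\vf(e_y)(q,q)=c(x,y)$ is the same computation in different notation, with the minor advantage of being visibly insensitive to the orientation of $\0$ on the chain.
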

	\begin{proof}
		Assume that $\0$ is increasing on a maximal chain containing $u<x<y$, so  $\0(e_{uy})=[\0(e_{ux}),\0(e_{xy})]$. In view of \cref{vf(e_z)(u_u)-vf(e_z)(v_v),[vf(e_z)_0(e_xy)]=(vf(e_z)(u_u)-vf(e_z)(v_v))e_uv} we have
		\begin{align}\label{[0(e_uy)_vf(e_y)]=c(u_y)0(e_uy)}
			[\0(e_{uy}),\vf(e_y)]=c(u,y)\0(e_{uy}).
		\end{align}
		On the other hand, using the Jacobi identity we obtain
		\begin{align*}
			[\0(e_{uy}),\vf(e_y)]&=[[\0(e_{ux}),\0(e_{xy})],\vf(e_y)]\\
			&=-[[\0(e_{xy}),\vf(e_y)],\0(e_{ux})]-[[\vf(e_y),\0(e_{ux})],\0(e_{xy})].
		\end{align*}
		Observe that $[\0(e_{xy}),\vf(e_y)]=c(x,y)\0(e_{xy})$ by \cref{vf(e_z)(u_u)-vf(e_z)(v_v),[vf(e_z)_0(e_xy)]=(vf(e_z)(u_u)-vf(e_z)(v_v))e_uv} and $[\vf(e_y),\0(e_{ux})]=0$, because $[e_y,e_{ux}]=0$. Therefore,
		\begin{align}\label{[0(e_uy)_vf(e_y)]=c(x_y)0(e_uy)}
			[\0(e_{uy}),\vf(e_y)]=-c(x,y)[\0(e_{xy}),\0(e_{ux})]=c(x,y)\0(e_{uy}).
		\end{align}
		It follows from \cref{[0(e_uy)_vf(e_y)]=c(u_y)0(e_uy),[0(e_uy)_vf(e_y)]=c(x_y)0(e_uy)} that $c(u,y)=c(x,y)$, as desired. If $\0$ is decreasing on a maximal chain containing $u<x<y$, then  $\0(e_{uy})=-[\0(e_{ux}),\0(e_{xy})]$, and the rest of the proof is the same. Thus, \cref{c(u_y)=c(x_y)} is proved. The proof of \cref{c(x_v)=c(x_y)} is similar.
	\end{proof}
	
	
	\begin{proposition}\label{c=c}
		Let $x_1<\dots<x_m$ be a maximal chain in $X$. Then $c(x_i,x_j)=c(x_1,x_m)$ for all $1\le i<j\le m$. 
	\end{proposition}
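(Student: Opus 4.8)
The plan is to obtain the stated equality from two applications of \cref{c(x_y)=c(u_y)=c(x_v)}. Recall that part \cref{c(u_y)=c(x_y)} of that lemma says that lowering the first coordinate along a chain leaves the value of $c$ unchanged, while part \cref{c(x_v)=c(x_y)} says the same for raising the second coordinate. Since $x_1<\dots<x_m$ is a maximal chain, every ordered triple of its elements is itself a chain contained in this maximal chain, so \cref{c(x_y)=c(u_y)=c(x_v)} is applicable to any such triple. This is the only hypothesis one needs to check, and it is automatic here.

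First I would fix $i<j$ and, keeping the upper endpoint $x_j$ fixed, push the lower endpoint down to $x_1$. If $i>1$, then $x_1<x_i<x_j$ is a chain inside the given maximal chain, and \cref{c(x_y)=c(u_y)=c(x_v)}\cref{c(u_y)=c(x_y)}, applied with $u=x_1$, $x=x_i$, $y=x_j$, yields $c(x_1,x_j)=c(x_i,x_j)$; if $i=1$ this equality is trivial. Next I would keep the lower endpoint $x_1$ fixed and push the upper endpoint up to $x_m$. If $j<m$, then $x_1<x_j<x_m$ is again a chain inside the maximal chain, and \cref{c(x_y)=c(u_y)=c(x_v)}\cref{c(x_v)=c(x_y)}, applied with $x=x_1$, $y=x_j$, $v=x_m$, yields $c(x_1,x_m)=c(x_1,x_j)$; if $j=m$ this is trivial as well.

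Combining the two equalities gives $c(x_i,x_j)=c(x_1,x_j)=c(x_1,x_m)$, which is exactly the assertion. There is essentially no obstacle beyond bookkeeping: the substantive content is already packaged in \cref{c(x_y)=c(u_y)=c(x_v)}, and all that remains is to note that the two intermediate triples $x_1<x_i<x_j$ and $x_1<x_j<x_m$ lie on the given maximal chain and to dispose of the degenerate cases $i=1$ and $j=m$ separately, in each of which the corresponding equality holds by definition.
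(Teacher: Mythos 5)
Your proof is correct and follows essentially the same route as the paper, which also applies \cref{c(x_y)=c(u_y)=c(x_v)} twice; the only difference is the order of the two steps (the paper passes through the intermediate value $c(x_i,x_m)$ rather than your $c(x_1,x_j)$), which is immaterial.
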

	\begin{proof}
		Indeed, by \cref{c(x_y)=c(u_y)=c(x_v)} we have $c(x_i,x_j)=c(x_i,x_m)=c(x_1,x_m)$.
	\end{proof}
	
	The following example shows that the map $c$ may take different values on different maximal chains.
	
	\begin{example}
		Let $X=\{1,2,3,4,5\}$ with the following Hasse diagram.
		\begin{center}
			\begin{tikzpicture}
			\draw  (0,0)--(-1,1);
			\draw  (-1,1)--(-2,2);
			\draw  (0,0)-- (1,1);
			\draw  (1,1)-- (2,2);
			\draw [fill=black] (0,0) circle (0.05);
			\draw  (0,-0.3) node {$1$};
			\draw [fill=black] (-1,1) circle (0.05);
			\draw  (-1,0.7) node {$2$};
			\draw [fill=black] (-2,2) circle (0.05);
			\draw  (-2,1.7) node {$3$};
			\draw [fill=black] (1,1) circle (0.05);
			\draw  (1,0.7) node {$4$};
			\draw [fill=black] (2,2) circle (0.05);
			\draw  (2,1.7) node {$5$};
			\end{tikzpicture}
		\end{center}
		Consider the $K$-linear map $\vf:I(X,K)\to I(X,K)$ defined as follows: $\vf(e_1)=e_1+2e_4+2e_5$, $\vf(e_{12})=e_{12}$, $\vf(e_{13})=e_{13}$, $\vf(e_{14})=e_{14}$, $\vf(e_{15})=-e_{15}$, $\vf(e_2)=e_2$, $\vf(e_{23})=e_{23}$, $\vf(e_3)=e_3$, $\vf(e_4)=-e_4$, $\vf(e_{45})=e_{45}$, $\vf(e_5)=-e_5$. 
		
		Observe that $\vf(e_{xy})=\pm e_{xy}$ for all $x\le y$ except $x=y=1$, so \cref{[vf-comm-pres-on-the-basis]} holds, whenever $(x,y)\ne(1,1)$. Moreover, observe that $\vf(D(X,K))\sst D(X,K)$, so $[\vf(e_1),\vf(e_x)]=0$ for all $x\in X$. The rest of the commutators that should be calculated to justify \cref{[vf-comm-pres-on-the-basis],[vf(e_xz)_vf(e_zy)]=const} are:
		\begin{align*}
			[\vf(e_1),\vf(e_{23})]=[\vf(e_1),\vf(e_{45})]=0
		\end{align*}
		and
		\begin{align*}
			e_{12}&=[\vf(e_1),\vf(e_{12})]=[\vf(e_{12}),\vf(e_2)],\\
			e_{13}&=[\vf(e_1),\vf(e_{13})]=[\vf(e_{12}),\vf(e_{23})]=[\vf(e_{13}),\vf(e_3)],\\
			-e_{14}&=[\vf(e_1),\vf(e_{14})]=[\vf(e_{14}),\vf(e_4)],\\
			e_{15}&=[\vf(e_1),\vf(e_{15})]=[\vf(e_{14}),\vf(e_{45})]=[\vf(e_{15}),\vf(e_5)],\\
			e_{23}&=[\vf(e_2),\vf(e_{23})]=[\vf(e_{23}),\vf(e_3)],\\
			-e_{45}&=[\vf(e_4),\vf(e_{45})]=[\vf(e_{45}),\vf(e_5)].
		\end{align*}
		Thus, $\vf$ is a strong commutativity preserver by \cref{vf-on-the-natural-basis,[vf(e_x)_vf(e_xy)]-LI}. It is easily seen by \cref{ker-vf-strong} that $\vf$ is bijective.
		
		Now observe that $\0=\id_{\B}$, $\sg(1,2)=\sg(1,3)=\sg(1,4)=\sg(2,3)=\sg(4,5)=1$, $\sg(1,5)=-1$ and $c(1,2)=c(1,3)=c(2,3)=1$, $c(1,4)=c(1,5)=c(4,5)=-1$. This shows that $c$ takes distinct values on the maximal chains $1<2<3$ and $1<4<5$ of $X$.
	\end{example}
	
	\subsection{From $(\0,\sg,c)$ to $\vf$}
	
	In \cref{sec-from-(0_sg_nu_c)-to-vf} we studied bijective strong commutativity preservers $\vf$ of $I(X,K)$ which also preserve $D(X,K)$, and with any such map we associated the quadruple of simpler maps $(\0,\sg,\nu,c)$. We proved several properties of $\0$, $\sg$, $\nu$ and $c$. In particular, we observed that it is enough to consider $\nu=0$, which means that $\vf$ is pure, i.e. $\vf$ satisfies
	\begin{align}\label{vf(e_xy)=sg(x_y)0(e_xy)}
		\vf(e_{xy})=\sg(x,y)\0(e_{xy}),\ x<y.
	\end{align}
	We are going to show that the properties of the triple $(\0,\sg,c)$ proved in \cref{sec-from-(0_sg_nu_c)-to-vf} are also sufficient to determine a pure commutativity preserver $\vf:I(X,K)\to I(X,K)$.
	
	\begin{definition}\label{sigma_compativel}
		Let $\0:\B\to\B$ be a bijection and $\sigma:X^2_{<}\to K^*$ a map. If there is $c:X^2_{<}\to K^*$ satisfying \cref{defn-of-c(x_z)}, then we say that $\sigma$ is \emph{$c$-compatible} with $\0$.
	\end{definition}

	\begin{proposition}\label{vf-comm-pres-on-I(X_K)}
		Let $\varphi: I(X,K)\to I(X,K)$ be a $K$-linear map satisfying \cref{vf(e_xy)=sg(x_y)0(e_xy)} on $J(I(X,K))$, where $\0:\B\to\B$ is a bijection which is monotone on maximal chains, and $\sg:X^2_{<}\to K^*$ is $c$-compatible with $\0$ for some $c:X^2_{<}\to K^*$. If, moreover, $\vf(D(X,K))\sst D(X,K)$ and \cref{vf(e_z)(u_u)-vf(e_z)(v_v)} holds, then $\varphi$ is a strong commutativity preserver.
	\end{proposition}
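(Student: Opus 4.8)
The plan is to reduce everything to the basis criterion of \cref{vf-on-the-natural-basis} and then read off strongness from \cref{[vf(e_x)_vf(e_xy)]-LI}. The computation that drives the whole argument is the following: writing $\0(e_{xy})=e_{uv}$ and using $\vf(e_x)\in D(X,K)$ together with \cref{vf(e_xy)=sg(x_y)0(e_xy),vf(e_z)(u_u)-vf(e_z)(v_v)} one gets
\begin{align*}
[\vf(e_x),\vf(e_{xy})]=\sg(x,y)[\vf(e_x),e_{uv}]=\sg(x,y)(\vf(e_x)(u,u)-\vf(e_x)(v,v))e_{uv}=\sg(x,y)c(x,y)\0(e_{xy}),
\end{align*}
and a symmetric computation with $z=y$ gives $[\vf(e_{xy}),\vf(e_y)]=\sg(x,y)c(x,y)\0(e_{xy})$ as well. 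Since $\sg(x,y),c(x,y)\in K^*$ and $\0$ is a bijection of $\B$, the family $\{[\vf(e_x),\vf(e_{xy})]:x<y\}$ consists of nonzero scalar multiples of pairwise distinct basis vectors, hence is linearly independent; so once $\vf$ is shown to preserve commutativity, \cref{[vf(e_x)_vf(e_xy)]-LI} will immediately yield that it is strong.

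It then remains to verify \cref{[vf-comm-pres-on-the-basis],[vf(e_xz)_vf(e_zy)]=const}. For \cref{[vf-comm-pres-on-the-basis]} I would split into three cases. When both basis elements are diagonal, the commutator vanishes since $\vf(D(X,K))\sst D(X,K)$ and $D(X,K)$ is commutative. When exactly one of them lies in $\B$, say $e_{xy}$ with $x<y$, and the other is a diagonal $e_z$, the hypothesis $[e_z,e_{xy}]=0$ forces $z\notin\{x,y\}$; writing $\0(e_{xy})=e_{uv}$, \cref{vf(e_z)(u_u)-vf(e_z)(v_v)} gives $\vf(e_z)(u,u)=\vf(e_z)(v,v)$, so $[\vf(e_z),\vf(e_{xy})]=\sg(x,y)(\vf(e_z)(u,u)-\vf(e_z)(v,v))e_{uv}=0$.

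The main obstacle is the remaining case of \cref{[vf-comm-pres-on-the-basis]}, where $e_{xy},e_{uv}\in\B$ are distinct and commute: here everything reduces to showing that the abstractly given bijection $\0$ preserves commutativity on $\B$, i.e. $[e_{xy},e_{uv}]=0\impl[\0(e_{xy}),\0(e_{uv})]=0$. For this I would first argue, exactly as in the derivation of \cref{[0(e_xy)_0(e_yz)]=+-0(e_xz)}, that monotonicity forces $\0$ to send composable pairs to composable ones: if $x<y<v$, so that $(e_{xy},e_{yv})$ is composable, I choose a maximal chain through $x,y,v$ and use that $\0$ is increasing or decreasing on it to conclude $[\0(e_{xy}),\0(e_{yv})]=\pm\0(e_{xv})\ne 0$. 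Consequently the map $\0^2:\B^2\to\B^2$ carries the set of non-commuting pairs into itself; since $\B^2$ is finite and $\0^2$ is a bijection, it carries that set onto itself, whence $\0^2$ preserves the complementary set of commuting pairs too. This is the same finiteness trick used in \cref{0-strong-comm-pres}, and it settles the last case.

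Finally, for the constancy condition \cref{[vf(e_xz)_vf(e_zy)]=const} with $x<z<y$, the same monotonicity argument on a maximal chain through $x,z,y$ gives $[\0(e_{xz}),\0(e_{zy})]=\0(e_{xy})$ with $\0(e_{xy})=\0(e_{xz})\0(e_{zy})$ in the increasing case and $[\0(e_{xz}),\0(e_{zy})]=-\0(e_{xy})$ with $\0(e_{xy})=\0(e_{zy})\0(e_{xz})$ in the decreasing case. In either case the $c$-compatibility relations of \cref{defn-of-c(x_z)} turn
\begin{align*}
[\vf(e_{xz}),\vf(e_{zy})]=\sg(x,z)\sg(z,y)[\0(e_{xz}),\0(e_{zy})]
\end{align*}
into $\sg(x,y)c(x,y)\0(e_{xy})$, matching the common value of $[\vf(e_x),\vf(e_{xy})]$ and $[\vf(e_{xy}),\vf(e_y)]$ computed in the first paragraph. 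Thus \cref{vf-on-the-natural-basis} applies and $\vf$ is a commutativity preserver, which combined with the linear independence noted above makes $\vf$ strong.
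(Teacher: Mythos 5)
Your proof is correct, and its overall skeleton --- verifying \cref{[vf-comm-pres-on-the-basis],[vf(e_xz)_vf(e_zy)]=const} and then invoking \cref{vf-on-the-natural-basis,[vf(e_x)_vf(e_xy)]-LI} --- is exactly the paper's. The one place where you genuinely diverge is the key case of two distinct commuting elements of $\B$. The paper argues contrapositively: writing $\0(e_{xy})=e_{ab}$ and $\0(e_{uv})=e_{cd}$, it supposes $b=c$ or $a=d$ and pulls back along a maximal chain containing $a<b<d$ (resp.\ $c<d<b$) to conclude $y=u$ or $x=v$; this step tacitly uses that $\0\m$ is monotone on maximal chains, which is not among the stated hypotheses and itself requires a small finiteness/counting argument to deduce from monotonicity of $\0$. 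You instead run the implication forwards: monotonicity of $\0$ (the hypothesis as given) shows composable pairs go to composable, hence non-commuting, pairs, and then the finiteness-plus-bijectivity trick of \cref{0-strong-comm-pres} upgrades ``non-commuting into non-commuting'' to ``commuting onto commuting.'' Both routes use finiteness somewhere, but yours invokes only the monotonicity of $\0$ itself, so it is slightly more self-contained relative to the proposition as stated; the paper's is more direct once one grants monotonicity of $\0\m$. Everything else --- the diagonal-versus-$\B$ cases via \cref{vf(e_z)(u_u)-vf(e_z)(v_v)}, the identity $[\vf(e_{xz}),\vf(e_{zy})]=c(x,y)\sg(x,y)\0(e_{xy})$ obtained from \cref{defn-of-c(x_z)}, and the linear-independence argument yielding strongness --- matches the paper's proof.
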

	\begin{proof}
		Let $x\le y$ and $u\le v$ such that $[e_{xy},e_{uv}]=0$.
		
		\textit{Case 1.} $x<y$ and $u<v$. Let $\0(e_{xy})=e_{ab}$ and $\0(e_{uv})=e_{cd}$. If $b=c$ and $\0\m$ is increasing (resp.~decreasing) on a maximal chain containing $a<b<d$, then $x<y=u<v$ (resp.~$u<v=x<y$). If $a=d$, then the same argument applied to a maximal chain containing $c<d<b$ also results in $y=u$ or $x=v$. This contradicts $[e_{xy},e_{uv}]=0$. Thus, $b\ne c$ and $a\ne d$, so $[\0(e_{xy}),\0(e_{uv})]=0$. It immediately follows from \cref{vf(e_xy)=sg(x_y)0(e_xy)} that $[\vf(e_{xy}),\vf(e_{uv})]=0$.
		
		\textit{Case 2.} $x=y$ and $u<v$. Then $x\not\in\{u,v\}$, so $[\vf(e_{xy}),\0(e_{uv})]=0$ by \cref{vf(e_z)(u_u)-vf(e_z)(v_v)} and the fact that $\vf(e_{xy})\in D(X,K)$. Hence, $[\vf(e_{xy}),\vf(e_{uv})]=0$ in view of \cref{vf(e_xy)=sg(x_y)0(e_xy)}.
		
		\textit{Case 3.} $x<y$ and $u=v$. This case is symmetric to Case 2.
		
		\textit{Case 4.} $x=y$ and $u=v$. Then $[\vf(e_{xy}),\vf(e_{uv})]=0$ due to $\vf(D(X,K))\sst D(X,K)$.
		
		We have thus proved \cref{[vf-comm-pres-on-the-basis]}.
		
		Let $x<z<y$. If $\0$ is increasing on a maximal chain containing $x<z<y$, then  $\0(e_{xz})\0(e_{zy})=\0(e_{xy})$, so by \cref{vf(e_xy)=sg(x_y)0(e_xy)} and the $c$-compatibility of $\sg$,
		\begin{align*}
			[\vf(e_{xz}),\vf(e_{zy})]=\sg(x,z)\sg(z,y)\0(e_{xy})=c(x,y)\sg(x,y)\0(e_{xy}).
		\end{align*}
		On the other hand, by \cref{vf(e_z)(u_u)-vf(e_z)(v_v)} and the fact that $\vf(e_{xy})\in D(X,K)$,
		\begin{align*}
			[\vf(e_x),\vf(e_{xy})]=c(x,y)\sg(x,y)\0(e_{xy})=-[\vf(e_y),\vf(e_{xy})]=[\vf(e_{xy}),\vf(e_y)].
		\end{align*}
		The case when $\0$ is decreasing on a maximal chain containing $x<z<y$ (so that $\0(e_{zy})\0(e_{xz})=\0(e_{xy})$) is similar. Hence, \cref{[vf(e_xz)_vf(e_zy)]=const} holds too. By \cref{vf-on-the-natural-basis} the map $\vf$ is a commutativity preserver. Since $[\vf(e_x),\vf(e_{xy})]$ is a nonzero multiple of $\0(e_{xy})$ and $\0$ is a bijection, then $\{[\vf(e_x),\vf(e_{xy})]:x<y\}$ is linearly independent. So, $\vf$ is strong thanks to \cref{[vf(e_x)_vf(e_xy)]-LI}.
	\end{proof}
	
	The following lemma is an analogue of \cite[Lemma 5.8]{FKS} and its proof is also very similar. It shows that, under certain hypotheses, the values of a commutativity preserver $\vf$ on $D(X,K)$ depend only on $\{\vf(e_z)(u_0,u_0)\}_{z\in X}$, where $u_0$ is a fixed element of $X$.
	
	\begin{lemma}\label{0_vf=0_psi-and-vf(e_z)_uu=psi(e_z)_uu}
		Let $z,u_0\in X$ and $\vf,\psi:I(X,K)\to I(X,K)$ be bijective strong commutativity preservers which preserve diagonality. If $\0_\vf=\0_\psi$, $c_\vf=c_\psi$ and $\vf(e_z)(u_0,u_0)=\psi(e_z)(u_0,u_0)$, then $\vf(e_z)=\psi(e_z)$.
	\end{lemma}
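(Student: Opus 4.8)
The plan is to reduce everything to the explicit formula for diagonal differences furnished by \cref{vf(e_x)(u_u)-vf(e_x)(v_v)-and-vf(e_y)(u_u)-vf(e_y)(v_v)}. Since $\vf$ and $\psi$ both preserve diagonality, the elements $\vf(e_z)$ and $\psi(e_z)$ lie in $D(X,K)$, so it suffices to prove that $\vf(e_z)(u,u)=\psi(e_z)(u,u)$ for every $u\in X$. I would therefore introduce the function $h\colon X\to K$ defined by $h(u)=\vf(e_z)(u,u)-\psi(e_z)(u,u)$ and aim to show that $h$ vanishes identically, knowing already that $h(u_0)=0$ by hypothesis.

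The key observation is that the \emph{differences} of diagonal entries of $\vf(e_z)$ depend only on the pair $(\0,c)$. Concretely, I would fix comparable $u<v$ in $X$, so that $e_{uv}\in\B$, and let $x<y$ be the unique pair with $\0(e_{xy})=e_{uv}$; uniqueness holds because $\0=\0_\vf=\0_\psi$ is a bijection of $\B$. Applying \cref{vf(e_x)(u_u)-vf(e_x)(v_v)-and-vf(e_y)(u_u)-vf(e_y)(v_v)} to $\vf$ gives $\vf(e_z)(u,u)-\vf(e_z)(v,v)$ equal to $c_\vf(x,y)$, $-c_\vf(x,y)$, or $0$ according to whether $z=x$, $z=y$, or $z\notin\{x,y\}$. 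Applying the same lemma to $\psi$, which also preserves diagonality and has the same $\0$, yields $\psi(e_z)(u,u)-\psi(e_z)(v,v)$ with $c_\vf$ replaced by $c_\psi$. Since by hypothesis $c_\vf=c_\psi$ and $\0_\vf=\0_\psi$, the two right-hand sides coincide verbatim, whence $h(u)=h(v)$ for every comparable pair $u<v$ in $X$.

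Finally, I would invoke connectedness of $X$. Every pair of elements of $X$ is joined by a walk through comparable consecutive elements, and along each such step $h$ takes equal values by the previous paragraph (the step $x_i,x_{i+1}$ is a comparability, whether $x_i<x_{i+1}$ or $x_{i+1}<x_i$); hence $h$ is constant on $X$. As $h(u_0)=0$, it follows that $h\equiv 0$, that is $\vf(e_z)(u,u)=\psi(e_z)(u,u)$ for all $u\in X$, which gives $\vf(e_z)=\psi(e_z)$ as diagonal elements.

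I do not anticipate a genuine obstacle here: the entire content is packaged in \cref{vf(e_x)(u_u)-vf(e_x)(v_v)-and-vf(e_y)(u_u)-vf(e_y)(v_v)}, and the only point requiring care is the bookkeeping that the unique pair $(x,y)$ with $\0(e_{xy})=e_{uv}$ is the same for $\vf$ and for $\psi$, which is exactly where the assumption $\0_\vf=\0_\psi$ is used. The propagation of the constancy of $h$ from $u_0$ to all of $X$ is precisely the connectivity argument employed in \cite[Lemma 5.8]{FKS}, as the preamble to the statement anticipates.
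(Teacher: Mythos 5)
Your proposal is correct and takes essentially the same route as the paper: both arguments rest on \cref{vf-on-the-natural-basis}'s consequence \cref{vf(e_x)(u_u)-vf(e_x)(v_v)-and-vf(e_y)(u_u)-vf(e_y)(v_v)} (diagonal-entry differences across comparable pairs are determined by $(\0,c)$ alone) together with connectedness of $X$ to propagate equality of entries from $u_0$ to every $v$. The paper merely packages the same computation as a telescoping sum of increments $\Delta_{\G,i}(z)$ along a walk from $u_0$ to $v$, whereas you phrase it as the difference function $h$ being constant on comparable pairs and hence on all of $X$.
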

	\begin{proof}
		Given $v\in X$, we choose a walk $\G:u_0,u_1,\dots,u_m=v$ and $v_i<w_i$, $0\le i\le m-1$, such that
		\begin{align}\label{vf(e_v_iw_i)=e_u_iu_i+1}
			\0_{\vf}(e_{v_iw_i})=
			\begin{cases}
				e_{u_iu_{i+1}}, & u_i<u_{i+1},\\
				e_{u_{i+1}u_i}, & u_i>u_{i+1}.
			\end{cases}
		\end{align}
		Then
		\begin{align}\label{vf(e_u)_vv=vf(e_u)_u_0u_0+sum}
			\vf(e_z)(v,v)=\vf(e_z)(u_0,u_0)+\sum_{i=0}^{m-1}\Delta_{\G,i}(z),
		\end{align}
		where $\Delta_{\G,i}(z)=\vf(e_z)(u_{i+1},u_{i+1})-\vf(e_z)(u_i,u_i)$. By \cref{vf(e_z)(u_u)-vf(e_z)(v_v)}
		\begin{align}\label{vf(e_z)_u_(i+1)u_(i+1)-vf(e_z)_u_iu_i}
			\Delta_{\G,i}(z) & =
			\begin{cases}
				-c_\vf(v_i,w_i), & (u_i<u_{i+1}\wedge z=v_i)\vee(u_i>u_{i+1}\wedge z=w_i),\\
				c_\vf(v_i,w_i),& (u_i<u_{i+1}\wedge z=w_i)\vee(u_i>u_{i+1}\wedge z=v_i),\\
				0, & z\not\in\{v_i,w_i\}.
			\end{cases}
		\end{align}
		In view of \cref{vf(e_z)_u_(i+1)u_(i+1)-vf(e_z)_u_iu_i,vf(e_v_iw_i)=e_u_iu_i+1}, the right-hand side of \cref{vf(e_u)_vv=vf(e_u)_u_0u_0+sum} depends only on $\0_\vf$, $c_\vf$ and $\vf(e_z)(u_0,u_0)$, so $\vf(e_z)(v,v)=\psi(e_z)(v,v)$. Thus,  $\vf(e_z)=\psi(e_z)$.
	\end{proof}
	
	\begin{definition}
		Let $\0:\B\to \B$ be a bijection, $c:X^2_{<}\to K^*$ and $\G: u_0,u_1,\dots,u_m=u_0$ a closed walk in $X$. Define the following $4$ functions $X\to K$:
		\begin{align*}
			s^+_{\0,c,\G}(z)&=\sum c(z,w_i), \text{ where the sum is over those } 0\le i\le m-1\text{ such that }\\
			&\quad u_i<u_{i+1}\text{ and }\exists w_i>z\text{ with }\0(e_{zw_i})=e_{u_iu_{i+1}},\\
			s^-_{\0,c,\G}(z)&=\sum c(z,w_i), \text{ where the sum is over those } 0\le i\le m-1\text{ such that }\\
			&\quad u_i>u_{i+1}\text{ and }\exists w_i>z\text{ with }\0(e_{zw_i})=e_{u_{i+1}u_i},\\
			t^+_{\0,c,\G}(z)&=\sum c(z,w_i), \text{ where the sum is over those } 0\le i\le m-1\text{ such that }\\
			&\quad u_i<u_{i+1}\text{ and }\exists w_i<z\text{ with }\0(e_{w_iz})=e_{u_iu_{i+1}},\\
			t^-_{\0,c,\G}(z)&=\sum c(z,w_i), \text{ where the sum is over those } 0\le i\le m-1\text{ such that }\\
			&\quad u_i>u_{i+1}\text{ and }\exists w_i<z\text{ with }\0(e_{w_iz})=e_{u_{i+1}u_i}.
		\end{align*}
	\end{definition}
	
	As in~\cite[Remark 5.10]{FKS} we obtain the following.
	\begin{remark}\label{0_vf-is-admissible}
		Let $\vf:I(X,K)\to I(X,K)$ be a bijective strong commutativity and diagonality preserver, $\0=\0_\vf$ and $c=c_\vf$. Then for any closed walk $\Gamma:u_0,u_1,\dots,u_m=u_0$ in $X$ and for all $z\in X$
		\begin{align}\label{s^+-s^-=t^+-t^-2}
			s^+_{\0,c,\G}(z)-s^-_{\0,c,\G}(z)=t^+_{\0,c,\G}(z)-t^-_{\0,c,\G}(z).
		\end{align}
	\end{remark}
	
	\begin{definition}\label{defn-admissible}
		Let $\0:\B\to \B$ be a bijection and $c:X^2_{<}\to K^*$ a map. The pair $(\0,c)$ is said to be \textit{admissible} if \cref{s^+-s^-=t^+-t^-2} holds for any closed walk $\G:u_0,u_1,\dots,u_m=u_0$ in $X$ and for all $z\in X$. 
	\end{definition}
	
	We remark that even if $c:X^2_{<}\to K^*$ is a constant map, there exist examples of non-admissible pairs $(\0,c)$ (see, e.g., \cite[Example 5.12]{FKS}).
	
	\begin{remark}\label{sum-vf(e_z)(uu)-nonzero}
		Let $\vf:I(X,K)\to I(X,K)$ be a bijective commutativity preserver and $u,v\in X$. Then $\sum_{z\in X}\vf(e_z)(u,u)=\sum_{z\in X}\vf(e_z)(v,v)\ne 0$.
		
		Indeed, since $\dl$ is central and $\vf$ is bijective, then $\vf(\delta)$ is a non-zero central element of $I(X,K)$.
	\end{remark}
	
	\begin{definition}
		Let $\0:\B\to \B$ be a bijection and $\sg:X^2_{<}\to K^*$ a map. Let $\vf:I(X,K)\to I(X,K)$ be a bijective strong commutativity and diagonality preserver. We say that $\vf$ \textit{induces} the pair $(\0,\sg)$ if \cref{vf(e_xy)=sg(x_y)0(e_xy)} holds for all $x<y$.
	\end{definition}
	
	\begin{definition}\label{c_constante}
		A map $c:X^2_{<}\to K^*$ is said to be \emph{constant on maximal chains in $X$} if \cref{c=c} holds.  
	\end{definition}
	
	The proof of the following lemma is similar to that of \cite[Lemma 5.16]{FKS}, but we give it here for completeness.
	
	\begin{lemma}\label{existense}
		Let $\0:\B\to \B$ be a bijection which is monotone on maximal chains in $X$ and $\sg:X^2_{<}\to K^*$ $c$-compatible with $\0$ for some $c:X^2_{<}\to K^*$, such that $c$ is constant on maximal chains in $X$ and the pair $(\0,c)$ is admissible. Then there exists a pure commutativity preserver $\vf:I(X,K)\to I(X,K)$ that induces $(\0,\sg)$.	
	\end{lemma}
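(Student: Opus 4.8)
The plan is to \emph{construct} such a $\vf$ directly. Its restriction to $J(I(X,K))$ is forced by \cref{vf(e_xy)=sg(x_y)0(e_xy)}, namely $\vf(e_{xy})=\sg(x,y)\0(e_{xy})$ for $x<y$, so all the freedom lies in the diagonal part. Following the recipe behind \cref{0_vf=0_psi-and-vf(e_z)_uu=psi(e_z)_uu}, I would fix a base point $u_0\in X$, choose scalars $a_z:=\vf(e_z)(u_0,u_0)$ (to be pinned down later), and define $\vf(e_z)(v,v)$ for every $v$ by integrating the prescribed increments along a walk $\G:u_0,\dots,u_m=v$, i.e.\ via \cref{vf(e_u)_vv=vf(e_u)_u_0u_0+sum,vf(e_z)_u_(i+1)u_(i+1)-vf(e_z)_u_iu_i}. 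Since $X$ is connected such a walk exists, and the result is independent of the chosen walk: any two walks with the same endpoints differ by a closed walk, whose total increment equals $-s^+_{\0,c,\G}(z)+s^-_{\0,c,\G}(z)+t^+_{\0,c,\G}(z)-t^-_{\0,c,\G}(z)$, and this vanishes precisely because $(\0,c)$ is admissible (\cref{s^+-s^-=t^+-t^-2}). This yields a well-defined $K$-linear map $\vf:I(X,K)\to I(X,K)$ with $\vf(D(X,K))\sst D(X,K)$ satisfying \cref{vf(e_xy)=sg(x_y)0(e_xy)}.

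Next I would verify the hypotheses of \cref{vf-comm-pres-on-I(X_K)}. Diagonality preservation is built in, and $\sg$ is $c$-compatible with the monotone bijection $\0$ by assumption, so the only task is to check \cref{vf(e_z)(u_u)-vf(e_z)(v_v)} for all $x<y$. For a covering pair this is exactly the increment used to define $\vf$, so it holds by construction. For a general pair $x<y$, I would extend $x<y$ to a maximal chain $C:u_1<\dots<u_m$ of $X$ with $x=u_p$, $y=u_q$; since $\0$ is monotone on $C$, it carries $C$ to a maximal chain $D:v_1<\dots<v_m$ with $\0(e_{xy})=e_{v_pv_q}$ in the increasing case. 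Computing $\vf(e_z)(v_p,v_p)-\vf(e_z)(v_q,v_q)$ by walking along $D$ from $v_p$ to $v_q$ and telescoping, the contributions of every intermediate chain element cancel because $c$ is constant on the maximal chain $C$, leaving exactly $c(x,y)$, $-c(x,y)$, or $0$ according as $z=x$, $z=y$, or $z\notin\{x,y\}$. The decreasing case is identical up to signs, which $c$-constancy again absorbs. Hence \cref{vf(e_z)(u_u)-vf(e_z)(v_v)} holds, and \cref{vf-comm-pres-on-I(X_K)} shows that $\vf$ is a strong commutativity preserver inducing $(\0,\sg)$.

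Finally I would secure bijectivity through the choice of the $a_z$. For each edge the increment $\Delta_{\G,i}(z)$ is nonzero only for the two endpoints of the corresponding $\0$-preimage, where by \cref{vf(e_z)_u_(i+1)u_(i+1)-vf(e_z)_u_iu_i} it takes opposite values, so $\sum_{z\in X}\Delta_{\G,i}(z)=0$; summing \cref{vf(e_u)_vv=vf(e_u)_u_0u_0+sum} over $z$ then gives $\sum_{z\in X}\vf(e_z)(v,v)=\sum_{z\in X}a_z$ for every $v$, whence $\vf(\dl)=\big(\sum_{z\in X}a_z\big)\dl$. Choosing the $a_z$ so that $\sum_{z\in X}a_z\ne 0$ (for instance $a_z=1$ for all $z$) makes $\vf(\dl)\ne 0$; since $I(X,K)$ is central because $X$ is connected, \cref{ker-vf-strong} gives that $\vf$ is injective, hence bijective as $|X|<\infty$. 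Thus $\vf$ is a bijective strong commutativity and diagonality preserver with $\nu=0$, i.e.\ a pure commutativity preserver inducing $(\0,\sg)$.

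I expect the main obstacle to be the verification of \cref{vf(e_z)(u_u)-vf(e_z)(v_v)} for non-covering pairs $x<y$: one must descend to a maximal chain through $x$ and $y$, transport it by $\0$, and carry out the telescoping sum, and it is exactly here that the hypotheses ``$\0$ monotone on maximal chains'' and ``$c$ constant on maximal chains'' are indispensable, admissibility having already been spent on the well-definedness of the diagonal values. Once this is in place, the reduction to \cref{vf-comm-pres-on-I(X_K)} and the bijectivity argument through $\vf(\dl)$ are routine.
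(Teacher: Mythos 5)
Your proposal is correct and follows essentially the same route as the paper's proof: define $\vf$ on $J(I(X,K))$ by \cref{vf(e_xy)=sg(x_y)0(e_xy)}, build the diagonal values by integrating the increments \cref{vf(e_z)_u_(i+1)u_(i+1)-vf(e_z)_u_iu_i} along walks from a base point (admissibility giving walk-independence), verify \cref{vf(e_z)(u_u)-vf(e_z)(v_v)} by a telescoping sum along a maximal chain using constancy of $c$, conclude via \cref{vf-comm-pres-on-I(X_K)}, and obtain injectivity from $\vf(\dl)\ne 0$ and \cref{ker-vf-strong}. The only cosmetic difference is that you telescope on the preimage side, using monotonicity of $\0$ on a maximal chain through $x<y$, whereas the paper works on the image side, using monotonicity of $\0\m$ on a maximal chain through $u<v$.
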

	\begin{proof}
		We define $\vf$ on $J(I(X,K))$ by \cref{vf(e_xy)=sg(x_y)0(e_xy)}. Now, let us show that $\vf$ extends to an injective strong commutativity preserver $I(X,K)\to I(X,K)$. Fix $u_0\in X$ and $\vf(e_z)(u_0,u_0)\in K$ ($z\in X$) in a way that $\sum_{z\in X}\vf(e_z)(u_0,u_0)\ne 0$. Given $v\in X$, there is a walk $\G: u_0,u_1,\dots,u_m=v$ from $u_0$ to $v$, since $X$ is connected. Then we define $\vf(e_z)(v,v)$ by formulas \cref{vf(e_u)_vv=vf(e_u)_u_0u_0+sum,vf(e_z)_u_(i+1)u_(i+1)-vf(e_z)_u_iu_i}. We will show that the definition does not depend on the choice of a walk from $u_0$ to $v$. For, if there is another walk $\G': u'_0=u_0,u'_1,\dots,u'_{m'}=v$ from $u_0$ to $v$, then there is a closed walk $\Omega: u_0,u_1,\dots,u_m=v=u'_{m'},u_{m+1}=u'_{m'-1},\dots,u_{m+m'}=u'_0=u_0$. Since $(\0,c)$ is admissible,
		\begin{align*}
			0&=-s^+_{\0,c,\Omega}(z)+s^-_{\0,c,\Omega}(z)+t^+_{\0,c,\Omega}(z)-t^-_{\0,c,\Omega}(z)\\
			&=\sum_{i=0}^{m+m'-1}\Delta_{\Omega,i}(z)=\sum_{i=0}^{m-1}\Delta_{\G,i}(z)+\sum_{i=m}^{m+m'-1}\Delta_{\Omega,i}(z),
		\end{align*}
		where $\Delta_{\Omega,i}(z)$ and $\Delta_{\G,i}(z)$ are given by \cref{vf(e_z)_u_(i+1)u_(i+1)-vf(e_z)_u_iu_i}. Hence
		\begin{align*}
			\sum_{i=0}^{m-1}\Delta_{\G,i}(z)&=-\sum_{i=m}^{m+m'-1}\Delta_{\Omega,i}(z)=\sum_{i=0}^{m'-1}(-\Delta_{\Omega,i+m}(z))\\
			&=\sum_{i=0}^{m'-1}\Delta_{\G',m'-i-1}(z)=\sum_{i=0}^{m'-1}\Delta_{\G',i}(z).
		\end{align*}
		So, $\vf(e_z)(v,v)$ is well-defined by \cref{vf(e_u)_vv=vf(e_u)_u_0u_0+sum}. Put $\vf(e_z)=\sum_{v\in X}\vf(e_z)(v,v)e_v\in D(X,K)$. Automatically, $\vf(D(X,K))\subseteq D(X,K)$.
		
		We are going to prove \cref{vf(e_z)(u_u)-vf(e_z)(v_v)}. Let $e_{xy}\in\B$ with $\0(e_{xy})=e_{uv}$. The first equality of \cref{vf(e_z)(u_u)-vf(e_z)(v_v)} is due to the fact that $\vf(D(X,K))\subseteq D(X,K)$. To prove the second one, take a walk starting at $u_0$ of the form $\G: u_0,\dots,u_l=u,\dots,u_m=v$, where $u_l<\dots<u_m$. Using this walk to define $\vf(e_z)(u,u)$ and $\vf(e_z)(v,v)$, by \cref{vf(e_u)_vv=vf(e_u)_u_0u_0+sum} we have
		\begin{align}\label{vf(e_z)(v_v)-vf(e_z)(u_u)=sum}
			\vf(e_z)(v,v)-\vf(e_z)(u,u)=\sum_{i=l}^{m-1}\Delta_{\G,i}(z).
		\end{align}
		Assume that $\0\m$ is increasing on a maximal chain containing $u_{l}<\dots<u_m$. Then there exist $v_{l}<\dots<v_m$ such that $\0(e_{v_iv_j})=e_{u_iu_j}$, $l\le i<j\le m$. If $z=v_{l}$, then $\Delta_{\G,l}(z)=-c(v_l,v_{l+1})=-c(v_l,v_m)$ and $\Delta_{\G,i}(z)=0$ for all $l<i\le m-1$ by \cref{vf(e_z)_u_(i+1)u_(i+1)-vf(e_z)_u_iu_i}. Hence, the right-hand side of \cref{vf(e_z)(v_v)-vf(e_z)(u_u)=sum} equals $-c(v_l,v_m)$. But $\0(e_{zv_m})=e_{u_lu_m}=e_{uv}$, so \cref{vf(e_z)(u_u)-vf(e_z)(v_v)} holds for $u<v$. Similarly, if $z=v_m$, then $\Delta_{\G,m-1}(z)=c(v_{m-1},v_m)=c(v_l,v_m)$ and $\Delta_{\G,i}(z)=0$ for all $l\le i<m-1$ by \cref{vf(e_z)_u_(i+1)u_(i+1)-vf(e_z)_u_iu_i}. Hence, the right-hand side of \cref{vf(e_z)(v_v)-vf(e_z)(u_u)=sum} equals $c(v_l,v_m)$ and $\0(e_{v_lz})=e_{uv}$ justifying \cref{vf(e_z)(u_u)-vf(e_z)(v_v)} for $u<v$. If $z=v_k$ with $l<k<m$, then $\Delta_{\G,k}(z)=-c(v_{k},v_{k+1})=-c(v_l,v_m)$, $\Delta_{\G,k-1}(z)=c(v_{k-1},v_{k})=c(v_l,v_m)$ and $\Delta_{\G,i}(z)=0$ for all $l\le i\le m-1$, $i\not\in\{k-1, k\}$. Hence, the right-hand side of \cref{vf(e_z)(v_v)-vf(e_z)(u_u)=sum} equals $0$ and $\0(e_{v_lv_m})=e_{uv}$ with $z\not\in\{v_l,v_m\}$, so \cref{vf(e_z)(u_u)-vf(e_z)(v_v)} still holds. Finally, if $z\ne v_i$ for all $l\le i\le m$, then $\Delta_{\G,i}(z)=0$ for all $l\le i\le m-1$, so that the right-hand side of \cref{vf(e_z)(v_v)-vf(e_z)(u_u)=sum} again equals $0$. The decreasing case is similar.
		
		By \cref{vf-comm-pres-on-I(X_K)}, the extended $\vf$ is a strong commutativity preserver. Clearly, $\0_\vf=\0$ and $\sg_\vf=\sg$ by the construction.
		
		It remains to prove that $\vf$ is injective (and hence, bijective). But this follows from our choice of $\vf(e_z)(u_0,u_0)\in K$ for all $z\in X$ and \cref{ker-vf-strong}, because $0\ne \sum_{z\in X}\vf(e_z)(u_0,u_0)=\vf(\dl)(u_0,u_0)$.
	\end{proof}
	
	\begin{definition}\label{defn-of-tau_0_sg_nu_s}
		Let $X=\{x_1,\dots,x_n\}$. Consider a bijection $\0:\B\to \B$ which is monotone on maximal chains in $X$, $\sg:X^2_{<}\to K^*$ $c$-compatible with $\0$ for some $c:X^2_{<}\to K^*$, such that $c$ is constant on maximal chains in $X$ and the pair $(\0,c)$ is admissible, and a sequence $\kappa=(k_1,\dots,k_n)\in K^n$ such that $\sum_{i=1}^nk_i\in K^*$. We denote by $\tau=\tau_{\0,\sg,c,\kappa}$ the pure commutativity preserver of $I(X,K)$ such that $\tau|_{J(I(X,K))}$ is given by the right-hand side of \cref{vf(e_xy)=sg(x_y)0(e_xy)} and $\tau|_{D(X,K)}$ is determined by 
		\begin{align}\label{tau(e_x_i)(x_1_x_1)=s_i}
			\tau(e_{x_i})(x_1,x_1)=k_i,
		\end{align}
		$i=1,\dots,n$, as in \cref{0_vf=0_psi-and-vf(e_z)_uu=psi(e_z)_uu,existense}.
	\end{definition}
	
	\begin{theorem}\label{vf-decomp-as-tau_0_sg_nu_s}
		Each pure commutativity preserver $\vf:I(X,K)\to I(X,K)$ can be uniquely represented in the form
		\begin{align}\label{vf=tau_0_sg_nu_s}
			\vf=\tau_{\0,\sg,c,\kappa},
		\end{align}
		where $\0:\B\to \B$ is a bijection which is monotone on maximal chains in $X$, $\sg:X^2_<\to K^*$ is $c$-compatible with $\0$ for some $c:X^2_{<}\to K^*$, such that $c$ is constant on maximal chains in $X$ and the pair $(\0,c)$ is admissible, and $\kappa=(k_1,\dots,k_n)\in K^n$ is such that $\sum_{i=1}^nk_i\in K^*$.
	\end{theorem}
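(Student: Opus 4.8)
The plan is to read this statement as an assembly of the structural results of \cref{sec-from-(0_sg_nu_c)-to-vf} together with the construction of \cref{existense}. For the existence half I would begin with an arbitrary pure commutativity preserver $\vf$ and attach to it its canonical data: the bijection $\0=\0_\vf:\B\to\B$ and the map $\sg=\sg_\vf:X^2_<\to K^*$ arising from \cref{vf(e_xy)=sg(x_y)0(e_xy)}, the map $c=c_\vf$ of \cref{cxy}, and the sequence $\kappa=(k_1,\dots,k_n)$ with $k_i=\vf(e_{x_i})(x_1,x_1)$. The first step is to confirm that this quadruple satisfies every hypothesis of \cref{defn-of-tau_0_sg_nu_s}, and these checks are exactly quotations of earlier results: $\0$ is monotone on maximal chains by \cref{0_monotone}; $c$ takes values in $K^*$ by \cref{vf(e_x)(u_u)-vf(e_x)(v_v)-and-vf(e_y)(u_u)-vf(e_y)(v_v)} and obeys \cref{defn-of-c(x_z)}, so $\sg$ is $c$-compatible with $\0$ in the sense of \cref{sigma_compativel}; $c$ is constant on maximal chains by \cref{c=c} and \cref{c_constante}; the pair $(\0,c)$ is admissible by \cref{0_vf-is-admissible} and \cref{defn-admissible}; and $\sum_{i=1}^n k_i=\vf(\dl)(x_1,x_1)\in K^*$ by \cref{sum-vf(e_z)(uu)-nonzero}. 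Hence $\tau:=\tau_{\0,\sg,c,\kappa}$ is well-defined.

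The core of the argument is then to identify $\vf$ with $\tau$. By construction the two maps already agree on $J(I(X,K))$, both sending $e_{xy}$ to $\sg(x,y)\0(e_{xy})$, so it suffices to show they coincide on $D(X,K)$. For this I would apply \cref{0_vf=0_psi-and-vf(e_z)_uu=psi(e_z)_uu} with $\psi=\tau$ and $u_0=x_1$: one has $\0_\vf=\0_\tau=\0$ by construction, while $\vf(e_{x_i})(x_1,x_1)=k_i=\tau(e_{x_i})(x_1,x_1)$ for every $i$ by \cref{tau(e_x_i)(x_1_x_1)=s_i}. The remaining hypothesis $c_\vf=c_\tau$ requires the observation that the map built in \cref{existense} satisfies \cref{vf(e_z)(u_u)-vf(e_z)(v_v)} with exactly the $c$ placed in its subscript; evaluating that identity at $z=x$, with $\0(e_{xy})=e_{uv}$, gives $\tau(e_x)(u,u)-\tau(e_x)(v,v)=c(x,y)$, which by \cref{cxy} says precisely $c_\tau=c=c_\vf$. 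The lemma then forces $\vf(e_{x_i})=\tau(e_{x_i})$ for all $i$, so $\vf$ and $\tau$ agree on the spanning set $\{e_{x_i}\}$ of $D(X,K)$ and hence on all of $I(X,K)=D(X,K)\oplus J(I(X,K))$. I expect this identification of $c_\tau$ with $c$ to be the one point deserving care, since it is what licenses the use of \cref{0_vf=0_psi-and-vf(e_z)_uu=psi(e_z)_uu} and thus the whole existence half.

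For uniqueness I would show that each entry of the quadruple is forced by $\vf$ through an explicit formula. If $\vf=\tau_{\0,\sg,c,\kappa}$, then \cref{vf(e_xy)=sg(x_y)0(e_xy)} writes $\vf(e_{xy})$ as the scalar $\sg(x,y)\in K^*$ times the basis vector $\0(e_{xy})$; since $\B$ is a basis this pins down $\0(e_{xy})$ as the unique basis element in the support of $\vf(e_{xy})$ and $\sg(x,y)$ as its coefficient, whence $\0=\0_\vf$ and $\sg=\sg_\vf$. Next, as above $\tau$ satisfies \cref{vf(e_z)(u_u)-vf(e_z)(v_v)}, whose case $z=x$ reads $c(x,y)=\vf(e_x)(u,u)-\vf(e_x)(v,v)$ with $\0(e_{xy})=e_{uv}$, i.e.\ $c=c_\vf$ by \cref{cxy}. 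Finally $k_i=\tau(e_{x_i})(x_1,x_1)=\vf(e_{x_i})(x_1,x_1)$ by \cref{tau(e_x_i)(x_1_x_1)=s_i}, so $\kappa$ is determined too. Thus the representation \cref{vf=tau_0_sg_nu_s} exists and is unique.
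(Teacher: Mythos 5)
Your proposal is correct and follows essentially the same route as the paper's proof: define $\0=\0_\vf$, $\sg=\sg_\vf$, $c=c_\vf$, $k_i=\vf(e_{x_i})(x_1,x_1)$, verify the hypotheses of \cref{defn-of-tau_0_sg_nu_s} by quoting \cref{0_monotone,defn-of-c(x_z),c=c,0_vf-is-admissible,sum-vf(e_z)(uu)-nonzero}, identify $\vf$ with $\tau_{\0,\sg,c,\kappa}$ on $J(I(X,K))$ by construction and on $D(X,K)$ via \cref{0_vf=0_psi-and-vf(e_z)_uu=psi(e_z)_uu}, and obtain uniqueness from \cref{vf(e_xy)=sg(x_y)0(e_xy),tau(e_x_i)(x_1_x_1)=s_i}. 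Your explicit check that $c_\tau=c$ (because the construction of \cref{existense} satisfies \cref{vf(e_z)(u_u)-vf(e_z)(v_v)} with the prescribed $c$), which licenses the application of \cref{0_vf=0_psi-and-vf(e_z)_uu=psi(e_z)_uu} and also pins down $c$ in the uniqueness part, is a correct filling-in of a detail the paper leaves implicit.
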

	\begin{proof}
		Let $X=\{x_1,\dots,x_n \}$. We define $\0=\0_\vf$, $\sg=\sg_\vf$, $c=c_\vf$ and $k_i=\vf(e_{x_i})(x_1,x_1)$, $i=1,\dots,n$. Then $\0$, $\sg$, $c$ and $\kappa$ satisfy the desired properties by \cref{0_monotone,defn-of-c(x_z),c=c,0_vf-is-admissible,sum-vf(e_z)(uu)-nonzero}.
		Hence, $\tau:=\tau_{\0,\sg,c,\kappa}$ is well-defined. For all $x_i<x_j$ we have
		$\vf(e_{x_ix_j})=\sg(x_i,x_j)\0(e_{x_ix_j})=\tau(e_{x_ix_j}),$
		so $\vf|_{J(I(X,K))}=\tau|_{J(I(X,K))}$. Moreover, $\vf|_{D(X,K)}=\tau|_{D(X,K)}$ by  \cref{0_vf=0_psi-and-vf(e_z)_uu=psi(e_z)_uu}. The uniqueness of \cref{vf=tau_0_sg_nu_s} follows from \cref{vf(e_xy)=sg(x_y)0(e_xy),tau(e_x_i)(x_1_x_1)=s_i}.
	\end{proof}
	
	As a consequence of \cref{S_af-circ-vf-pure,vf-decomp-as-tau_0_sg_nu_s}, we obtain the main result of this paper.
	\begin{corollary}\label{vf-decomp-S_af-tau}
		Every bijective strong commutativity and diagonality preserver $I(X,K)\to I(X,K)$ is a composition of a commutativity preserver $S_\af$ of shift type and a pure commutativity preserver $\tau_{\0,\sg,c,\kappa}$.
	\end{corollary}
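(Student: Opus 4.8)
The plan is to assemble the two preceding results into a single factorization. Starting from an arbitrary bijective strong commutativity and diagonality preserver $\vf$, I would first invoke \cref{S_af-circ-vf-pure}: using the data $(\0,\sg,\nu)$ attached to $\vf$ via \cref{vf(e_xy)=0(e_xy)+nu(e_xy)}, define the linear map $\af:I(X,K)\to D(X,K)$ by \cref{af_D(X_K)=0,af(0(e_xy))=nu(e_xy)}. That lemma guarantees that $S_\af$ is a bijective strong commutativity preserver, and its proof verifies that $\af$ fulfills \cref{[af(e_xy)_e_uv]=0}; since the remaining conditions \cref{[af(e_xy)_e_z]=0,[af(e_xy)_e_y]=[e_x_af(e_y)],[af(e_x)_e_xy]ne-e_xy} hold automatically for any strong commutativity preserver of the form $S_\af$ by \cref{S_af-strong-comm-pres}, the map $S_\af$ is in fact \emph{of shift type}. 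The same lemma shows that the composite $\psi:=S\m_\af\circ\vf$ is a pure commutativity preserver.

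Next I would apply \cref{vf-decomp-as-tau_0_sg_nu_s} to this pure commutativity preserver $\psi$, obtaining a unique representation $\psi=\tau_{\0,\sg,c,\kappa}$, where $\0$ is monotone on maximal chains, $\sg$ is $c$-compatible with $\0$ for some $c$ that is constant on maximal chains with $(\0,c)$ admissible, and $\kappa\in K^n$ has $\sum_i k_i\in K^*$. Composing on the left with $S_\af$ then yields $\vf=S_\af\circ\psi=S_\af\circ\tau_{\0,\sg,c,\kappa}$, which is precisely the asserted decomposition into a commutativity preserver of shift type followed by a pure commutativity preserver.

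I do not expect any genuine obstacle here, since the corollary is a formal consequence of \cref{S_af-circ-vf-pure,vf-decomp-as-tau_0_sg_nu_s}: all the substantive work—checking that $\af$ obeys the shift-type conditions, that $S\m_\af\circ\vf$ is pure, and that every pure commutativity preserver is realized as some $\tau_{\0,\sg,c,\kappa}$—has already been carried out. The only point meriting a word of care is that the composition $S\m_\af\circ\vf$ be well defined as a commutativity preserver; this is ensured by the identity $S\m_\af=S_{-\af}$ recorded inside the proof of \cref{S_af-circ-vf-pure}, so that the inverse shift is again of the same type and the displayed factorization of $\vf$ makes sense.
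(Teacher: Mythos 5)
Your proposal is correct and follows essentially the same route as the paper, which derives the corollary directly by combining \cref{S_af-circ-vf-pure} (producing the shift-type factor $S_\af$ and the pure map $S_\af\m\circ\vf$) with \cref{vf-decomp-as-tau_0_sg_nu_s} (representing that pure map as $\tau_{\0,\sg,c,\kappa}$). Your added remarks—that $\af$ satisfies the remaining shift-type conditions automatically and that $S_\af\m=S_{-\af}$—are accurate and merely make explicit what the paper leaves implicit.
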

	
	\subsection{Applications and a problem for future research}
	
	\begin{definition}\label{Lie-type-def}
		We say that a strong commutativity preserver $A\to A$ is \textit{of Lie type} if it is the sum of a (non-zero) scalar multiple of a Lie automorphism of $A$ and a linear central-valued map on $A$.
	\end{definition}
	
	\begin{proposition}\label{vf-Lie-type}
		A bijective strong commutativity and diagonality preserver $\vf:I(X,K)\to I(X,K)$ is of Lie type if and only if $\vf=S_\af\circ\tau_{\0,\sg,c,\kappa}$, where $\af$ is central-valued and $c$ is a constant map on $X^2_<$.
	\end{proposition}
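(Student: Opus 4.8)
The plan is to prove both implications using the canonical decomposition $\vf=S_\af\circ\tau_{\0,\sg,c,\kappa}$ of \cref{vf-decomp-S_af-tau}, in which, by \cref{S_af-circ-vf-pure}, $\af|_{D(X,K)}=0$ and $\af(\0(e_{xy}))=\sg(x,y)\m\nu(e_{xy})$. Since $\0$ is a bijection of $\B$ and $\sg(x,y)\in K^*$, the map $\af$ is central-valued if and only if $\nu(e_{xy})\in Z(I(X,K))$ for every $x<y$; recall also that $Z(I(X,K))=K\delta$ because $X$ is connected. The structural identity I will use repeatedly is that, for $x<y$ with $\0(e_{xy})=e_{uv}$, the fact that $\vf(e_x),\nu(e_{xy})\in D(X,K)$ together with \cref{vf(e_z)(u_u)-vf(e_z)(v_v)} gives $[\vf(e_x),\vf(e_{xy})]=\sg(x,y)c(x,y)\0(e_{xy})$, the extra bracket $[\vf(e_x),\nu(e_{xy})]$ vanishing as both factors are diagonal.

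For the forward implication, suppose $\vf=\lb\psi+\mu$ with $\psi$ a Lie automorphism, $\lb\in K^*$ and $\mu$ central-valued. As $\mu(e_x),\mu(e_{xy})\in Z(I(X,K))$, all mixed brackets drop out and $[\vf(e_x),\vf(e_{xy})]=\lb^2[\psi(e_x),\psi(e_{xy})]=\lb^2\psi([e_x,e_{xy}])=\lb^2\psi(e_{xy})=\lb(\vf(e_{xy})-\mu(e_{xy}))$. Comparing this with the structural identity and writing $\vf(e_{xy})=\sg(x,y)\0(e_{xy})+\nu(e_{xy})$, the radical component yields $c(x,y)=\lb$ for all $x<y$, so $c$ is constant, while the diagonal component yields $\nu(e_{xy})=\mu(e_{xy})\in Z(I(X,K))$, so $\af$ is central-valued.

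For the converse, assume $c\equiv\lb$ and $\af$ central-valued. I first set $\sg':=\lb\m\sg$ and observe that $\sg'$ is $c'$-compatible with $\0$ for the constant map $c'\equiv1$, and that $(\0,c')$ is admissible: both follow by dividing the defining identities of \cref{defn-of-c(x_z),defn-admissible} for $(\sg,\lb)$ by the appropriate power of $\lb$, using that $c$ is constant. Choosing $\kappa'$ with $\sum_ik_i'\ne0$, \cref{existense,defn-of-tau_0_sg_nu_s} produce a pure commutativity preserver $\psi:=\tau_{\0,\sg',1,\kappa'}$, and I will check that $\psi$ is a Lie automorphism by verifying $\psi([e_i,e_j])=[\psi(e_i),\psi(e_j)]$ on all pairs of standard basis elements: the brackets $[e_x,e_{uv}]$ are handled by \cref{vf(e_z)(u_u)-vf(e_z)(v_v)} with the constant value $1$, the non-composable brackets $[e_{xy},e_{uv}]$ by \cref{0-strong-comm-pres}, and the composable ones by \cref{[0(e_xy)_0(e_yz)]=+-0(e_xz)} combined with the $1$-compatibility of $\sg'$, the two signs matching via the two cases of \cref{defn-of-c(x_z)}. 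Finally I will show $\tau_{\0,\sg,c,\kappa}=\lb\psi+\mu_0$ with $\mu_0$ central: on $J(I(X,K))$ the two sides coincide since $\sg(x,y)=\lb\sg'(x,y)$, while on $D(X,K)$, applying \cref{vf(e_z)(u_u)-vf(e_z)(v_v)} to $\tau$ (with $c=\lb$) and to $\psi$ (with $c=1$, scaled by $\lb$) gives $(\tau(e_z)-\lb\psi(e_z))(u,u)=(\tau(e_z)-\lb\psi(e_z))(v,v)$ for every comparable pair $u<v$, whence $\mu_0(e_z)\in K\delta$ by connectedness. Then $\vf=S_\af\circ\tau=\tau+\af\circ\tau=\lb\psi+(\mu_0+\af\circ\tau)$ exhibits $\vf$ as of Lie type, since $\mu_0+\af\circ\tau$ is central-valued.

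The main obstacle will be the middle step of the converse, namely verifying directly that $\psi=\tau_{\0,\sg',1,\kappa'}$ is a genuine Lie automorphism and not merely a strong commutativity preserver; this is exactly where the normalization forcing $c\equiv1$ for $\psi$ is indispensable, and it requires carefully matching the signs coming from \cref{[0(e_xy)_0(e_yz)]=+-0(e_xz)} against those in the $c$-compatibility relations.
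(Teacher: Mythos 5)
Your proof is correct, and its skeleton matches the paper's (same canonical decomposition, same bracket comparison in one direction, same case-by-case Lie-bracket verification in the other), but both implications are executed differently enough to be worth recording. In the forward direction the paper first invokes \cite[Propositions 2.2--2.3]{FKS} to identify $k\psi(e_{xy})=\sg(x,y)\0(e_{xy})$ and $\xi(e_{xy})=\nu(e_{xy})$, and only then uses the bracket $[e_x,e_{xy}]=e_{xy}$ to conclude $c\equiv k$; you obtain both identifications at once from the single identity $[\vf(e_x),\vf(e_{xy})]=\lb(\vf(e_{xy})-\mu(e_{xy}))$ by separating the $J(I(X,K))$- and $D(X,K)$-components, which makes this half self-contained within the paper's own lemmas, with no appeal to the structure theory of Lie automorphisms from \cite{FKS}. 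In the converse the paper defines the Lie automorphism explicitly by $\psi(e_{xy})=k\m\sg(x,y)\0(e_{xy})$ and $\psi(e_x)=k\m\vf(e_x)$, so that the central correction vanishes on $D(X,K)$ by construction; you instead manufacture $\psi=\tau_{\0,\lb\m\sg,1,\kappa'}$ through \cref{existense} (after checking that $1$-compatibility of $\lb\m\sg$ and admissibility of $(\0,1)$ survive the rescaling, which they do, since $c\equiv\lb$ lets you divide the relevant identities by powers of $\lb$), and you then pay for the arbitrary choice of $\kappa'$ with an extra step: showing via \cref{vf(e_z)(u_u)-vf(e_z)(v_v)} and the connectedness of $X$ that $\tau_{\0,\sg,c,\kappa}-\lb\psi$ is central-valued on the diagonal. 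Both routes verify that $\psi$ preserves brackets using exactly the same ingredients (\cref{0-strong-comm-pres,[0(e_xy)_0(e_yz)]=+-0(e_xz),defn-of-c(x_z),vf(e_z)(u_u)-vf(e_z)(v_v)}) and the same sign bookkeeping. The paper's choice of diagonal part is the shorter path; yours trades that brevity for a reuse of the existence machinery already developed for \cref{vf-decomp-as-tau_0_sg_nu_s}, at the modest cost of one additional centrality argument.
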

	\begin{proof}
		Let $\vf$ be a bijective strong commutativity and diagonality preserver of $I(X,K)$. By \cref{vf-decomp-S_af-tau} we have $\vf=S_\af\circ\tau_{\0,\sg,c,\kappa}$, where $\af$ is given by \cref{af(0(e_xy))=nu(e_xy),af_D(X_K)=0}. 
		
		Assume that $\vf=k\psi+\xi$, where $k\in K^*$, $\psi$ is a Lie automorphism of $I(X,K)$ and $\xi:I(X,K)\to Z(I(X,K))$. Then $\xi(e_{xy})=\nu(e_{xy})$ and $k\psi(e_{xy})=\sg(x,y)\0(e_{xy})$ for all $x<y$ by \cref{vf(e_xy)=0(e_xy)+nu(e_xy)} and \cite[Propositions 2.2--2.3]{FKS}. Hence $\af(\0(e_{xy}))\in Z(I(X,K))$ for all $x<y$ by \cref{af(0(e_xy))=nu(e_xy)}, so together with \cref{af_D(X_K)=0} this guarantees that $\af$ is central-valued. Now, it follows from $[e_x,e_{xy}]=e_{xy}$ and \cref{vf(e_z)(u_u)-vf(e_z)(v_v)} that
		\begin{align*}
			k\sg(x,y)\0(e_{xy})&=k^2\psi(e_{xy})=[k\psi(e_x),k\psi(e_{xy})]=[k\psi(e_x)+\xi(e_x),k\psi(e_{xy})]\\
			&=[\vf(e_x),\sg(x,y)\0(e_{xy})]=c(x,y)\sg(x,y)\0(e_{xy}),
		\end{align*}
		whence $c(x,y)=k$ for all $x<y$.
		
		Conversely, suppose that $\af(e_{xy})\in Z(I(X,K))$ for all $x\le y$ and $k:=c(x,y)\in K^*$ for all $x<y$. Define $\psi:I(X,K)\to I(X,K)$ by
		\begin{align*}
			\psi(e_{xy})=
			\begin{cases}
				k\m\sg(x,y)\0(e_{xy}), & x<y,\\
				k\m\vf(e_{xy}), & x=y.
			\end{cases}
		\end{align*}
		We are going to prove that $\psi$ preserves the Lie product. Take arbitrary $x\le y$ and $u\le v$.
		
		\textit{Case 1.} $x<y$ and $u<v$. If $[e_{xy},e_{uv}]=0$, then $[\psi(e_{xy}),\psi(e_{uv})]=0$ by \cref{0-strong-comm-pres}. If $y=u$, then $[e_{xy},e_{uv}]=e_{xv}$ and by \cref{[0(e_xy)_0(e_yz)]=+-0(e_xz),defn-of-c(x_z)}
		\begin{align*}
			[\psi(e_{xy}),\psi(e_{uv})]=k^{-2}\sg(x,y)\sg(y,v)[\0(e_{xy}),\0(e_{yv})]=k\m\sg(x,v)\0(e_{xv})=\psi(e_{xv}).
		\end{align*}
		The case $x=v$ reduces to the case $y=u$ because $[e_{xy},e_{ux}]=-[e_{ux},e_{xy}]$. 
		
		\textit{Case 2.} $x=y$ and $u<v$. Again, if $[e_x,e_{uv}]=0$, then by \cref{vf(e_xy)=0(e_xy)+nu(e_xy)} and $\vf(D(X,K))=D(X,K)$
		\begin{align*}
			[\psi(e_x),\psi(e_{uv})]=[k\m\vf(e_x),k\m\sg(u,v)\0(e_{uv})]=k^{-2}[\vf(e_x),\vf(e_{uv})]=0.
		\end{align*}
		And if $[e_x,e_{uv}]\ne 0$, then $u=x$ or $v=x$. For $u=x$ we have $[e_x,e_{uv}]=e_{xv}$ and by \cref{vf(e_z)(u_u)-vf(e_z)(v_v)}
		\begin{align*}
			[\psi(e_x),\psi(e_{uv})]=k^{-2}\sg(x,v)[\vf(e_x),\0(e_{xv})]=k\m\sg(x,v)\0(e_{xv})=\psi(e_{xv}).
		\end{align*}
		For $v=x$ we have $[e_x,e_{uv}]=-e_{ux}$ and
		\begin{align*}
			[\psi(e_x),\psi(e_{uv})]=k^{-2}\sg(u,x)[\vf(e_x),\0(e_{ux})]=-k\m\sg(u,x)\0(e_{ux})=-\psi(e_{ux})
		\end{align*}
		thanks to the same \cref{vf(e_z)(u_u)-vf(e_z)(v_v)}. 
		
		\textit{Case 3.} $x<y$ and $u=v$. This case is similar to Case 2.
		
		\textit{Case 4.} $x=y$ and $u=v$. Then $[\psi(e_{xy}),\psi(e_{uv})]=[\vf(e_{xy}),\vf(e_{uv})]=0$ due to $\vf(D(X,K))=D(X,K)$.
		
		Thus, $\psi$ preserves the Lie product. It is also a bijection of $I(X,K)$, because $\0$ is a bijection of $\B$ and $\vf$ is bijective on $D(X,K)$. So, $\psi$ is a Lie automorphism of $I(X,K)$. 
		
		Now define $\xi(e_{xy})=\sg(x,y)\af(\0(e_{xy}))$ for all $x<y$ and $\xi(e_x)=0$ for all $x\in X$, so that $\xi$ is central-valued. If $x<y$, then by \cref{vf(e_xy)=0(e_xy)+nu(e_xy),af(0(e_xy))=nu(e_xy)}
		\begin{align*}
			\vf(e_{xy})=\sg(x,y)\0(e_{xy})+\nu(e_{xy})=k\psi(e_{xy})+\sg(x,y)\af(\0(e_{xy}))=k\psi(e_{xy})+\xi(e_{xy}),
		\end{align*}
		and 
		\begin{align*}
			\vf(e_x)=k\psi(e_x)=k\psi(e_x)+\xi(e_x).
		\end{align*}
	\end{proof}
	
	Recall that $T_n(K)\cong I(X,K)$, where $X$ is a chain of length $n-1$. As a consequence of \cref{vf-Lie-type} we obtain a partial generalization of \cite[Theorem~4]{Marcoux-Sourour99}.
	\begin{corollary}
		Let $n>2$ and $X$ be a chain of length $n-1$. Then every bijective strong commutativity and diagonality preserver $I(X,K)\to I(X,K)$ is of Lie type.
	\end{corollary}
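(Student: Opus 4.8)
The plan is to verify that, when $X$ is a chain with $n>2$ elements, the two conditions characterizing Lie-type maps in \cref{vf-Lie-type}---namely, that $\af$ is central-valued and that $c$ is constant on $X^2_<$---hold automatically. Write $X$ as the chain $x_1<x_2<\dots<x_n$. The crucial observation is that, since $X$ is totally ordered, it possesses a single maximal chain, namely $X$ itself.

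First I would handle the constancy of $c$. Because $x_1<\dots<x_n$ is the only maximal chain in $X$, \cref{c=c} immediately yields $c(x_i,x_j)=c(x_1,x_n)$ for all $1\le i<j\le n$, so $c$ is a constant map on $X^2_<$. Next I would show that $\af$ is central-valued. Recall from \cref{S_af-circ-vf-pure} that the relevant $\af$ satisfies $\af|_{D(X,K)}=0$ and $\af(\0(e_{xy}))=\sg(x,y)\m\nu(e_{xy})$, so $\af$ is central-valued precisely when every $\nu(e_{xy})$ lies in $Z(I(X,K))$. Since $n>2$, no two-element subset $\{x,y\}$ of the chain $X$ fails to be extendable: any of the remaining $n-2\ge 1$ elements is comparable to both $x$ and $y$ and so enlarges $\{x,y\}$ to a strictly longer chain. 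Hence $\{x,y\}$ is never a maximal chain, and \cref{nu-almost-central-valued} applies to every pair $x<y$, giving $\nu(e_{xy})\in Z(I(X,K))$ for all $x<y$. Thus $\af$ is central-valued.

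With both conditions verified, \cref{vf-Lie-type} lets me conclude that $\vf$ is of Lie type. There is no genuine obstacle here: the argument is a direct specialization of the earlier structural results to the totally ordered case, with the hypothesis $n>2$ being exactly what guarantees that every pair fails to be a maximal chain (and hence that $\nu$ is fully central) and that the unique maximal chain forces $c$ to be constant.
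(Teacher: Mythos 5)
Your proposal is correct and follows essentially the same route as the paper's own proof: decompose $\vf=S_\af\circ\tau_{\0,\sg,c,\kappa}$ via \cref{vf-decomp-S_af-tau}, obtain constancy of $c$ from the uniqueness of the maximal chain together with \cref{c=c}, and obtain central-valuedness of $\af$ from \cref{nu-almost-central-valued}, with $n>2$ ensuring no two-element subset of the chain is a maximal chain. Your write-up merely makes explicit the two steps the paper leaves implicit (the reduction of central-valuedness of $\af$ to $\nu(e_{xy})\in Z(I(X,K))$ via \cref{S_af-circ-vf-pure}, and the role of $n>2$), so there is nothing to correct.
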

	\begin{proof}
		We know that $\vf=S_\af\circ\tau_{\0,\sg,c,\kappa}$ thanks to \cref{vf-decomp-S_af-tau}, where $\af$ is as in \cref{af(0(e_xy))=nu(e_xy),af_D(X_K)=0}. In view of \cref{vf-Lie-type} it suffices to prove that $\af$ is central-valued and $c$ is constant. The former is due to \cref{nu-almost-central-valued}, and the latter is because of the fact that $X$ has only one maximal chain.
	\end{proof}
	
	\begin{example}
		Let $X=\{1,2\}$ be a chain of length $1$, where $1<2$. Then $\vf:I(X,K)\to I(X,K)$, given by $\vf(e_1)=e_1$, $\vf(e_{12})=e_1+e_{12}$ and $\vf(e_2)=e_2$, is a strong bijective commutativity and diagonality preserver of $I(X,K)$ by \cref{vf-on-the-natural-basis,[vf(e_x)_vf(e_xy)]-LI}, since $[\vf(e_1),\vf(e_2)]=0$ and $[\vf(e_1),\vf(e_{12})]=e_{12}=[\vf(e_{12}),\vf(e_2)]$.\footnote{Observe that $\vf=S_\af$, where $\af(e_1)=\af(e_2)=0$ and $\vf(e_{12})=e_1$.} If $\vf$ were of Lie type, say $\vf=k\psi+\xi$, then $\vf(e_{12})$ would be the sum of $k\psi(e_{12})\in J(I(X,K))$ by \cite[Propositions 2.2--2.3]{FKS} and $\xi(e_{12})\in Z(I(X,K))$, so $\vf(e_{12})_D=\xi(e_{12})\in Z(I(X,K))$, which is not the case.
	\end{example}
	
	Recall from~\cite[Theorem 4.15]{FKS} that any Lie automorphism of $I(X,K)$ is a composition of an inner automorphism of $I(X,K)$ and a Lie automorphism of $I(X,K)$ that preserves the diagonal subalgebra $D(X,K)$. It is natural to expect that an analogous fact holds for strong commutativity preservers of $I(X,K)$. Unfortunately, this is not the case as the following easy example shows.
	
	\begin{example}\label{exm-vf(e_x)_D=0}
		Let $X=\{1,2\}$ with $1<2$. Then $\vf(e_1)=e_{12}$, $\vf(e_{12})=e_2$ and $\vf(e_2)=\dl-e_{12}$ define a strong bijective commutativity preserver of $I(X,K)$, because $[\vf(e_1),\vf(e_2)]=0$ and $[\vf(e_1),\vf(e_{12})]=e_{12}=[\vf(e_{12}),\vf(e_2)]$ (see \cref{vf-on-the-natural-basis,[vf(e_x)_vf(e_xy)]-LI}). However, $\vf(e_1)$ is not conjugated to a diagonal element, since $\vf(e_1)_D=0$.
	\end{example}
	
	Nevertheless, the problem that $\vf(e_1)_D=0$ from \cref{exm-vf(e_x)_D=0} can be fixed by composing $\vf$ with $S_\af$ for an appropriate $\af:I(X,K)\to D(X,K)$. Indeed, define $\af(e_1)=\af(e_{12})=-\af(e_2)=e_1$. Then $S_\af$ is a bijective strong commutativity preserver by \cref{S_af-strong-comm-pres}. Moreover, $(S_\af\circ\vf)(e_1)=e_1+e_{12}$ and $(S_\af\circ\vf)(e_2)=e_2-e_{12}$ are commuting (in fact, orthogonal) idempotents, so there exists an inner automorphism $\psi$ of $I(X,K)$ such that $(\psi\circ S_\af\circ\vf)(e_1)=e_1$ and $(\psi\circ S_\af\circ\vf)(e_2)=e_2$ by \cite[Lemma 5.4]{GK}. Thus, the composition $\psi\circ S_\af\circ\vf$ is a bijective strong commutativity and diagonality preserver of $I(X,K)$. We conjecture that this can be done in a more general case.
	
	\begin{conjecture}
		Let $X$ be a finite connected poset and $\vf:I(X,K)\to I(X,K)$ a bijective strong commutativity preserver. Then there are a bijective strong commutativity preserver of shift type $S_\af$ and an inner automorphism $\psi$ of $I(X,K)$ such that $\psi\circ S_\af\circ\vf$ preserves $D(X,K)$.
	\end{conjecture}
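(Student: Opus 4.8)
The plan is to recast the conjecture as a statement about the single subalgebra $M:=\vf(D(X,K))$ and to resolve it by first splitting $M$ with a shift and then diagonalising it by a conjugation. First I would record that $M$ is a maximal abelian subalgebra (MASA). Since $\vf$ is bijective and strong, the inclusion \cref{vf(C_A(X))-sst-C_B(vf(X))} becomes an equality, so $\vf(C_{I(X,K)}(D(X,K)))=C_{I(X,K)}(M)$; as $C_{I(X,K)}(D(X,K))=D(X,K)$ by \cref{basis-of-C_I(d)}, this yields $M=C_{I(X,K)}(M)$. Hence $M$ is a self-centralising subalgebra of dimension $|X|$, and it contains the central element $\dl$ (nonzero by \cref{sum-vf(e_z)(uu)-nonzero}).

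Next I would translate the goal into the splitting of $M$. Let $\pi\colon I(X,K)\to D(X,K)$ be the projection along $J(I(X,K))$, an algebra homomorphism whose restriction to $M$ has kernel the nilradical $M\cap J(I(X,K))$. I claim that $M$ is conjugate to $D(X,K)$ by an inner automorphism precisely when $M\cap J(I(X,K))=0$: in that case $\pi|_M$ is injective, hence an isomorphism $M\cong D(X,K)\cong K^{|X|}$, so the primitive idempotents of $M$ form a complete family of $|X|$ orthogonal rank-one idempotents summing to $\dl$, which by \cite[Lemma~5.4]{GK} is conjugate to $\{e_x:x\in X\}$; comparing dimensions, the conjugating inner automorphism $\psi$ carries $M$ onto $D(X,K)$. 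Because $S_\af\circ\vf$ is again a bijective strong commutativity preserver whenever $S_\af$ is of shift type, the set $(S_\af\circ\vf)(D(X,K))=S_\af(M)$ is again a MASA, and the conjecture reduces to producing a shift-type $S_\af$ with $S_\af(M)\cap J(I(X,K))=0$.

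For the splitting step I would look for $\af$ vanishing on $D(X,K)$, which makes \cref{[af(e_xy)_e_z]=0,[af(e_xy)_e_y]=[e_x_af(e_y)],[af(e_x)_e_xy]ne-e_xy} automatic and leaves only \cref{[af(e_xy)_e_uv]=0}. For such $\af$ one has $\pi(S_\af(g))=g_D+\af(g_J)$ for all $g\in M$, so the requirement $S_\af(M)\cap J(I(X,K))=0$ is equivalent to the linear map $M\to D(X,K)$, $g\mapsto g_D+\af(g_J)$, being onto (equivalently injective). Now \cref{[af(e_xy)_e_uv]=0} forces $\af(e_{xy})$ to be constant on the connected components of the comparability graph of $X$ with the single edge $\{x,y\}$ deleted; hence only those comparabilities $\{x,y\}$ whose deletion disconnects that graph contribute a genuinely new diagonal direction, namely the indicator of a side of the bridge, while along every other edge $\af(e_{xy})$ is pinned to a multiple of $\dl$ and adds nothing to $\pi(M)$. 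It is therefore natural to read off the primitive idempotents of $M$, think of them as conjugate to diagonal idempotents $e_{Y_i}$, and organise the analysis blockwise, each block being a local MASA of a corner $e_{Y_i}I(X,K)e_{Y_i}\cong I(Y_i,K)$; the $2$-element example preceding the conjecture is exactly the case of one local block sitting on a single bridge.

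The hard part will be to show that the nilradical $M\cap J(I(X,K))$ of the MASA $\vf(D(X,K))$ is always supported on bridge comparabilities and that the associated side-indicators span a complement of $\pi(M)$ in $D(X,K)$; granting this, a suitable $\af$ satisfying \cref{[af(e_xy)_e_uv]=0} makes $g\mapsto g_D+\af(g_J)$ surjective and finishes the argument. Equivalently, one must rule out the degenerate MASAs---such as the ``Toeplitz'' local MASA of $T_n(K)$ for $n\ge 3$, whose nilradical lives on non-bridge edges, where a shift is powerless---as possible images $\vf(D(X,K))$ of a bijective strong commutativity preserver. Establishing this alignment between the abstract structure of $\vf(D(X,K))$ and the connectivity of the comparability graph of $X$ is the main obstacle, and is precisely what our $2$-element example circumvents by an explicit choice of $\af$.
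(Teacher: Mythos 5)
The first thing to say is that this statement has no proof in the paper to compare yours against: it is posed as an open conjecture, supported only by the explicit two-element computation that precedes it. So your proposal must stand on its own, and it does not close the conjecture; by your own account it stops at ``the main obstacle,'' which is precisely where the conjecture lives. What you do establish is a correct and useful reduction. Since $\vf$ is bijective and strong, $\vf(C_{I(X,K)}(W))=C_{I(X,K)}(\vf(W))$ for every $W$, and $C_{I(X,K)}(D(X,K))=D(X,K)$ by \cref{basis-of-C_I(d)}, so $M:=\vf(D(X,K))$ is self-centralizing, hence a maximal abelian subalgebra containing $\dl$. If moreover $M\cap J(I(X,K))=0$, then the projection $\pi$ along $J(I(X,K))$ restricts to an algebra isomorphism $M\to D(X,K)$, so $M$ is spanned by $|X|$ orthogonal idempotents summing to $\dl$ whose diagonal parts are the $e_x$, and a simultaneous-conjugation version of \cite[Lemma 5.4]{GK} yields an inner automorphism $\psi$ with $\psi(M)=D(X,K)$. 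Since $S_\af\circ\vf$ is again a bijective strong commutativity preserver, the conjecture is indeed equivalent to: for every $M$ arising as $\vf(D(X,K))$ there is a shift-type $S_\af$ with $S_\af(M)\cap J(I(X,K))=0$.

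The gap is that you never prove the statement this reduces to, and it is not a technical remainder --- it is the whole theorem. Concretely, you need: (a) the nilradical $M\cap J(I(X,K))$ of every such $M$ is supported on comparabilities $\{x,y\}$ whose deletion disconnects the comparability graph of $X$, and (b) the corresponding side-indicators, which by your own analysis of \cref{[af(e_xy)_e_uv]=0} are the only nontrivial diagonal values a shift can contribute, span a complement of $\pi(M)$ in $D(X,K)$. Neither is addressed. The danger you flag is real: for $X$ the chain $1<2<3$ the comparability graph is a triangle with no bridges, so \cref{[af(e_xy)_e_uv]=0} forces $\af(e_{xy})\in\gen{\dl}$ for every $x<y$, and hence for the ``Toeplitz'' maximal abelian subalgebra $M_0=\gen{\dl,\ e_{12}+e_{23},\ e_{13}}$ one gets $\pi(S_\af(M_0))\sst\gen{\dl,\af(\dl)}$, of dimension at most $2<3=\dim D(X,K)$, for every admissible $\af$ (even allowing $\af|_{D(X,K)}\ne 0$). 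So if $M_0$ were of the form $\vf(D(X,K))$, the conjecture would simply be false. It happens not to be: a bijective strong $\vf$ preserves $\dim C_{I(X,K)}(f)$, and $D(X,K)$ contains elements with $4$-dimensional centralizer while the centralizers of elements of $M_0$ have dimensions $3$, $5$ or $6$ only --- but no argument of this kind appears in your proposal, and extending it to classify, for an arbitrary finite connected poset, which maximal abelian subalgebras can occur as $\vf(D(X,K))$ is exactly the open problem. A smaller issue: by restricting to $\af|_{D(X,K)}=0$ you discard the freedom permitted by \cref{[af(e_xy)_e_z]=0,[af(e_xy)_e_y]=[e_x_af(e_y)]}, so even your bridge criterion is only shown sufficient within a proper subclass of shift maps.
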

	\section*{Acknowledgements}
	The second author was partially supported by CNPq (process: 404649/2018-1).
	
	\bibliography{bibl}{}
	\bibliographystyle{acm}
	
\end{document}